\newcommand{\R}{\mathbb{R}}
\newcommand{\N}{\mathbb{N}}
\renewcommand{\div}{\text{div }}
\newcommand{\curl}{\text{curl }}
\newcommand{\tu}{\tilde{u}}
\newcommand{\tv}{\tilde{v}}
\newcommand{\hu}{\hat{u}}
\newcommand{\omein}{\omega_{\mathrm{in}}}
\newcommand{\tomega}{\tilde{\omega}}
\newcommand{\homega}{\hat{\omega}}
\newcommand{\tomein}{\tilde{\omega} _{\mathrm{in}}}
\newcommand{\tomeout}{\tilde{\omega} _{\mathrm{out}}}
\newcommand{\omeout}{\omega_{\mathrm{out}}}
\newcommand{\Go}{\Gamma_{\mathrm{out}}}
\newcommand{\pn}{\partial_{\mathbf{n}}}
\newcommand{\Gi}{\Gamma_{\mathrm{in}}}
\newcommand{\Ho}{H^1_{0,out}(\Omega)}
\theoremstyle{plain}% default
\newtheorem{thm}{Theorem}[section]
\newtheorem{lem}[thm]{Lemma}
\newtheorem{prop}[thm]{Proposition}
\newtheorem{cor}[thm]{Corollary}
\theoremstyle{definition}
\newtheorem{defn}{Definition}[section]
\theoremstyle{remark}
\newtheorem{rem}[thm]{Remark}
\title[Uniqueness of solutions to $2D$ Euler with sources and sinks]{Uniqueness of Yudovich's solutions to the 2D incompressible Euler equation despite the presence of  sources and sinks}
 \author{Florent Noisette, Franck Sueur}
  \date{\today}
\address{Florent Noisette, Institut de Math\'ematiques de Bordeaux, UMR CNRS 5251,Universit\'e de Bordeaux, 351 cours de la Lib\'eration, F33405 Talence Cedex, France}
\address{Franck Sueur, Institut de Math\'ematiques de Bordeaux, UMR CNRS 5251, Universit\'e de Bordeaux, 351 cours de la Lib\'eration, F33405 Talence Cedex, France   $\&$ Institut  Universitaire de France}
\begin{document}
\maketitle

\begin{abstract}
In $1962$, Yudovich proved the existence and uniqueness of classical solutions to the 2D incompressible Euler equations in the case where the fluid occupies a bounded domain with entering and exiting flows on some parts of the boundary.
The normal velocity is prescribed on the whole boundary, as well as the entering vorticity.
The uniqueness part of Yudovich's result holds for H\"older vorticity, by contrast with his 1961 result on the case of an impermeable boundary, for which the normal velocity is prescribed as zero on the boundary, and for which the assumption that the initial vorticity is bounded was shown to be sufficient to guarantee uniqueness. 
Whether or not uniqueness holds as well for bounded vorticities in the case of entering and exiting flows has  been left open until $2014$, when Weigant and Papin succeeded to tackle the case where the domain is a rectangle. 
In this paper we adapt Weigant and Papin's result to the case of a smooth domain with several internal sources and sinks.
\end{abstract}

%%%%%%%%%%%%%%%%%%%%%%%%%%%%%%%%%%%%%%%%%%%%%%%%%%%%%%%%%%%%
\section{Introduction}

This first section is devoted to the presentation of the model  and of the mathematical problem which are at stake in this paper. 

\subsection{Geometry of the domain}
We consider a bounded domain $\Omega \subset \R^2$ whose  boundary, denoted by $\Gamma$, is a $C^2$ simple curve which can be decomposed as
\begin{equation} \label{deco-do}
\Gamma = \bigcup_{i\in I}{\Gamma_i},
\end{equation}
where  $I$ is a finite set of index which admits a  partition $I=\{0\}\cup I_{\mathrm{in}}\cup I_{\mathrm{out}}$,  
the sets $\Gamma_i$ are the connected components of $\Gamma$, with the convention that $\Gamma_0$ is the external boundary.
We set
$I^* := I_{\mathrm{in}}\cup I_{\mathrm{out}} = I \setminus \{0\} ,$
and
\begin{equation} \label{tison}
\Gi:=\cup_{i\in I_{\mathrm{in}}}{\Gamma_i} \quad \text{ and } \quad \Go:=\cup_{i\in I_{\mathrm{out}}}{\Gamma_i} .
\end{equation}
The indexes refer to the fact that, below, 
$\Gi$ is the zone with entering flux and $\Go$ is the zone with exiting flux, see  \eqref{signeg}.
% There is moreover no flux across the external boundary.

\subsection{The equations at stake}
We assume that the domain $\Omega$ is occupied by an incompressible perfect flow whose evolution is driven by 
 the incompressible Euler equation. More precisely we consider the following transport equation where the scalar function $\omega(t,x)$ denotes the fluid vorticity:
\begin{subequations}
\label{partie-vort}
\begin{align}
\label{e:trans-fort}
\partial_t \omega + u\cdot \nabla \omega     &= 0     &\text{ on } \Omega,       \\
\label{e:omega-ini-cond}
\omega(0,.) &= \omega_0                               &\text{ on } \Omega,       \\
\label{e:ome-bound-cond}
\omega &= \omein                                      &\text{ on } \Gi.
\end{align}
\end{subequations}
where the vector field $u(t,x)$ is the fluid velocity,  given as the solution of the system:
\begin{subequations}
\label{ellip}
\begin{align}
\label{e:u-incomp}
\div u   &= 0                         &\text{ on } \Omega, \\
\curl u  &= \omega                    &\text{ on } \Omega, \\
\label{e:g} u\cdot n &= g                         &\text{ on } \Gamma, \\
\label{e:def-circ}
\int_{\Gamma_i}{u\cdot \tau} &= \mathcal{C}_i   &\text{  for all } i\in I^*, 
\end{align}
\end{subequations}
where the $(\mathcal{C}_i (t))_{i\in I^*}$ are  the {circulations} of the fluids around each connected component $\Gamma_i$, for $ i\in I^*$,  of the boundary, given as 
the solutions of the following Cauchy problem:
\begin{subequations}
\label{partKelvin}
\begin{align}
\label{e:Kelvin}
\mathcal{C}_i'(t) &= -\int_{\Gamma_i}{\omega g}         &\text{  for all } i\in I^*, \\
\mathcal{C}_i(0) &= \mathcal{C}_{i,0}                                     &\text{  for all } i\in I^*.
\end{align}
\end{subequations}
The quantities    $\omega_0$,  $\omein$, $g$ and  $(\mathcal{C}_{i,0})_{i\in I^*}$, which appear in the right hand sides of the equations above, are given data. 
Regarding $g$,  we assume that, at any time $t$, it has zero average on $\Gamma$, which is the compatibility condition associated with the incompressibility, and we also assume,  as hinted above, that
\begin{align} \label{signeg}
g \leq 0 \text{ on } \Gi, \quad
g \geq 0 \text{ on } \Go, \quad
g = 0    \text{ on } \Gamma_0,
\end{align}
which means that $\Gi$ is the part of the boundary with entering flux, and $\Go$ is the part with exiting flux.
Thus  \eqref{e:ome-bound-cond} is a condition on the 
the vorticity which enters in the  domain $\Omega$. 
Since  $\Omega$ is multiply connected, it is necessary to prescribe the circulations, see  \eqref{e:def-circ}, to guarantee the uniqueness of the solution $u$ to the system  \eqref{ellip}.
These circulations evolve in time according to  Kelvin's law \eqref{e:Kelvin}, where the right hand side encodes  the  vorticity flux accross $\Gamma_i$.
We refer to \cite[Lemma 1.2]{yudo-given} and to \cite[Section 1.3]{BS} for a derivation of \eqref{e:Kelvin} from the velocity formulation of the incompressible Euler equation.
Observe that for $i$ in $I_{in}$, the trace of the vorticity on $\Gamma_i$, 
which appears in the right hand side of \eqref{e:Kelvin} is prescribed according to \eqref{e:ome-bound-cond},
whereas it is part of the  solution for $i$ in $I_{out}$.

\begin{figure} \centering  
\includegraphics[scale= 0.4]{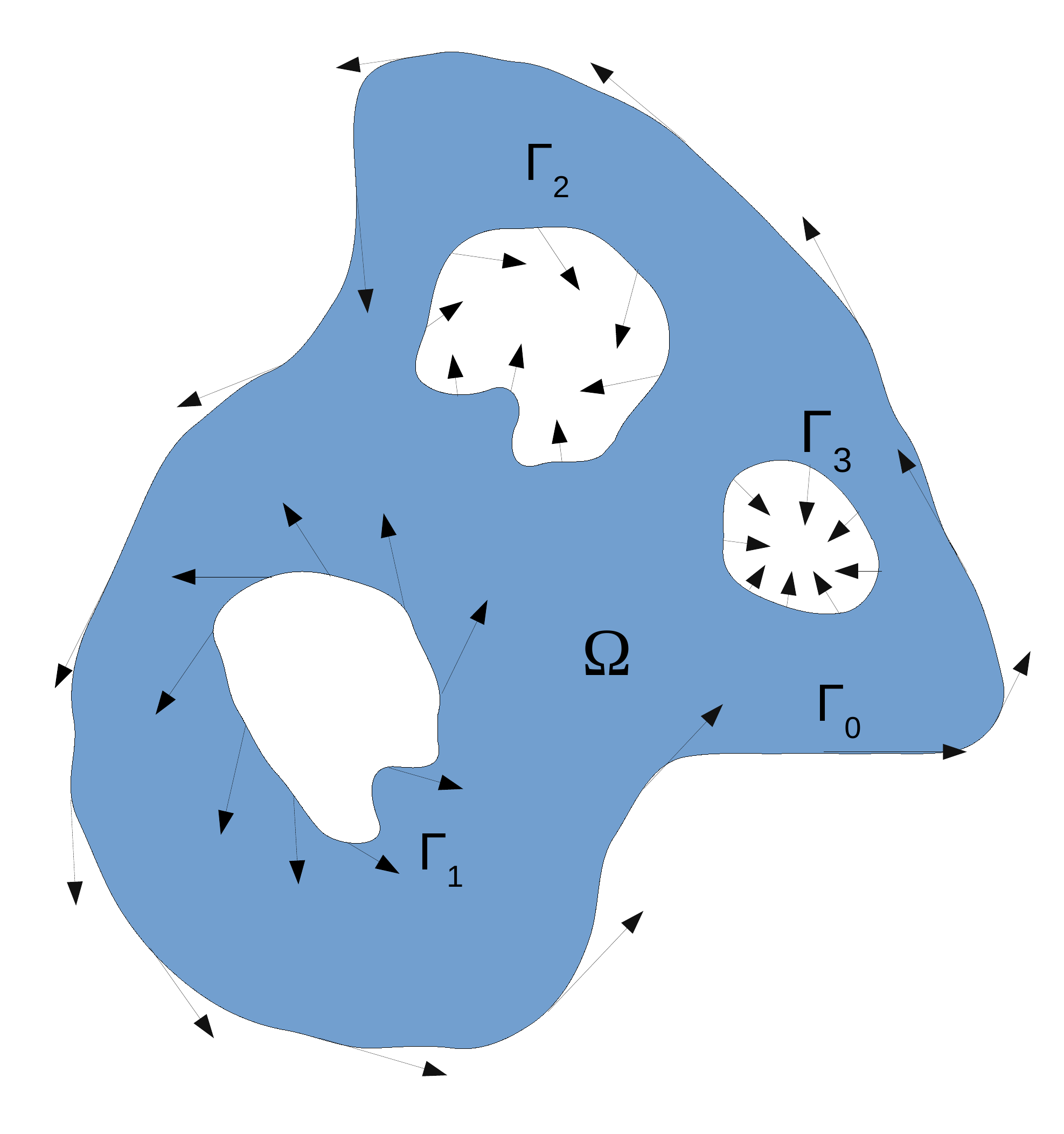}
 \caption{Example of a fluid domain with one source and two sinks.} \label{fig} 
  \end{figure}

\subsection{An open problem on the uniqueness of solutions with bounded vorticity}

The system \eqref{partie-vort}-\eqref{ellip}-\eqref{partKelvin} was proposed by  Yudovich  in \cite{yudo-given}, who proved the existence and uniqueness of classical solutions. 
Later, the existence part of Yudovich's result in the permeable case has been generalized  to weaker solutions, 
see  \cite{Alekseev,BS,Che:Sta,Uvarovskaya}. 
On the other hand, for a long time, no progress has been obtained on the uniqueness part despite that the conjecture that uniqueness should hold in the case where the vorticity is bounded, was broadly shared. Indeed such a result is well-known  in the case of impermeable boundaries, for which $u\cdot n =0$ on $\Gamma$ instead of \eqref{e:g}, as proved by Yudovich in his celebrated result  \cite{yudo63}. 
This problem was recalled for instance in \cite[Section 3.2]{Mamontov} and in  \cite[Section 3.2.1]{Glass}. Indeed this open problem has known a regain of interest in the controllability community after the works on the incompressible Euler system by  Coron and Glass in \cite{coron1,coron2,coron3,glass2001}.
Finally, a breakthrough result was obtained by  Weigant and Papin in $2014$, see \cite{pap-weig:ini}, who proved the case where the fluid domain is a rectangle with lateral inlet and outlet, under the assumption that the vorticity is bounded.  
Their nice proof makes use of two energy estimates on the difference of two solutions, one related to the time-evolution of the kinetic energy, and another one associated with a clever auxiliary function. The two estimates are combined to deduce a stability estimate which in particular guarantees uniqueness of the solutions with bounded vorticities for this particular geometry.  
In this paper we extend Weigant and Papin's  approach to prove the uniqueness of solutions with bounded vorticity in the 
case, presented above, of a smooth multiply-connected domain with several interior sources and sinks.

%%%%%%%%%%%%%%%%%%%%%%%%%%%%%%%%%%%%%%%%%%%%%%%%%%%%%%%%%%%%
\section{Reminder on the definition and existence of solutions with bounded vorticity}

This section is devoted to recall a definition and an existence result of weak solutions to the system  \eqref{partie-vort}-\eqref{ellip}-\eqref{partKelvin}
 in the case where the vorticity is bounded.
This will set up the context to which the uniqueness result of this paper is applied. 

Let us start the discussion with the choice of the unknowns. 
Because of the boundary condition \eqref{e:ome-bound-cond}, the vorticity $\omega$ is a natural choice, for instance compared to the velocity $u$. 
Also, as mentioned above, the vorticity which flows through  $\Go$ is unknown; 
we  denote it by $\omeout$. 
Taking \eqref{e:ome-bound-cond} into account for what concerns $\Gi$, 
Kelvin's laws \eqref{partKelvin} can therefore be recast as  
\begin{subequations}
\begin{align}
\label{circu-in}
\mathcal{C}_i(t) &= \mathcal{C}_{i,0} -\int_{0}^{t}{\int_{\Gamma_i}{\omein \,  g}}  &\text{ for } i\in I_{\mathrm{in}}, \\
\label{circu-out}
\mathcal{C}_i(t) &= {\mathcal{C}}_{i,0} -\int_{0}^{t}{\int_{\Gamma_i}{\omeout \,  g}}  &\text{ for } i\in I_{\mathrm{out}}.
\end{align}
\end{subequations}
The fact that the system \eqref{ellip} determines a single solution $u$, the right hand sides being given, is well known. 
In particular, we have the following result, see for instance \cite{yudo-given},
where we omit the  time-dependence since the time only plays the role of a parameter in this part of the system. 
\begin{prop}
\label{p:yudo-ell-bound}
Let $(\mathcal{C}_i)_i\in \R^{I^*}$, $g$ in the H\"older space $C^{1,\alpha}(\Gamma)$ for some $\alpha\in (0,1)$, with zero average on $\Gamma$, and $\omega$  in $L^{\infty}(\Omega)$.
 Then the system \eqref{ellip}  admits an unique solution 
 $$u \in \underset{1\leq p<+\infty}{\cap}   W^{1,p}(\Omega) .$$
Moreover there is a constant $C>0$ such that, for all $p\geq 2$, 
\begin{equation}
\label{pbound}
\Vert u \Vert_{W^{1,p}(\Omega)}
    \leq Cp\big(\Vert\omega\Vert_{L^p(\Omega)}+\Vert g\Vert_{C^{1,\alpha}(\Gamma)}+\sum_{i\in I^*}{|\mathcal{C}_i|}\big).
\end{equation}
\end{prop}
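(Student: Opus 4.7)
The plan is to construct $u$ by a Hodge-type decomposition into three pieces, each tailored to one of the data $\omega$, $g$, $(\mathcal{C}_i)$, and then to read off \eqref{pbound} by assembling the $W^{1,p}$ estimate of each piece. Uniqueness is the easy half: if $u_1$ and $u_2$ are two solutions, their difference $v:=u_1-u_2$ is divergence-free, curl-free, tangent to $\Gamma$, and has zero circulation around every $\Gamma_i$ with $i\in I^*$. Writing $v=\nabla^\perp\psi$ with $\psi$ harmonic and constant on each boundary component, and normalising $\psi\equiv 0$ on $\Gamma_0$, the vanishing of all the circulations forces, via the classical invertibility of the linear map sending the boundary constants of $\psi$ to the circulations of $v$, that $\psi$ also vanishes on each $\Gamma_i$ for $i\in I^*$; hence $\psi\equiv 0$ by uniqueness for the Dirichlet Laplacian, so $v\equiv 0$.

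For existence I would write $u=u^\omega+u^g+u^{\mathcal C}$ as follows. The first piece $u^\omega:=\nabla^\perp\psi$ is built from the stream function $\psi$ solving the Dirichlet problem $\Delta\psi=\omega$ in $\Omega$, $\psi=0$ on $\Gamma$, so that $\div u^\omega=0$, $\curl u^\omega=\omega$ and $u^\omega\cdot n=0$ on $\Gamma$. The second piece $u^g:=\nabla\varphi$ uses the harmonic function $\varphi$ solving the Neumann problem $\Delta\varphi=0$ in $\Omega$, $\pn\varphi=g$ on $\Gamma$, which is solvable precisely thanks to the zero-mean assumption on $g$. The third piece $u^{\mathcal C}:=\sum_{i\in I^*}\lambda_i e_i$ is a linear combination of a fixed basis $(e_i)_{i\in I^*}$ of harmonic vector fields in $\Omega$ with zero normal trace, normalised by $\int_{\Gamma_j}e_i\cdot\tau=\delta_{ij}$ (this basis spans the finite-dimensional space of such harmonic fields, whose dimension equals $|I^*|$); the coefficients $\lambda_i$ are chosen so that the total circulations match the prescribed $\mathcal{C}_i$, which reduces to a diagonal linear system after subtracting the circulations already carried by $u^\omega$ and $u^g$.

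For the estimate, $\|u^\omega\|_{W^{1,p}(\Omega)}\leq\|\psi\|_{W^{2,p}(\Omega)}$ is bounded by the $L^p$ theory of the Dirichlet Laplacian in a $C^2$ domain as $K_p\,\|\omega\|_{L^p(\Omega)}$, and the essential quantitative input is that $K_p\leq Cp$ for $p\geq 2$; this linear-in-$p$ growth reflects the sharp $L^p$ bound on the Riesz transforms (Stein's estimate) and is exactly the fact that eventually drives Yudovich-type arguments. The harmonic piece $u^g$ is controlled uniformly in $p$ by $\|g\|_{C^{1,\alpha}(\Gamma)}$ via Schauder regularity, and the finite-dimensional piece $u^{\mathcal C}$ is dominated by $\sum_{i\in I^*}|\lambda_i|$, each $|\lambda_i|$ being bounded by $|\mathcal{C}_i|$ plus the parasitic circulations of $u^\omega$ and $u^g$, themselves controlled by $\|\omega\|_{L^p(\Omega)}+\|g\|_{C^{1,\alpha}(\Gamma)}$ through trace inequalities. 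Summing the three contributions yields \eqref{pbound}. The main obstacle is the careful $p$-tracking in the $W^{2,p}$ Dirichlet estimate; all the other ingredients are classical elliptic theory combined with the algebraic duality between the inner boundary components and the harmonic cohomology of $\Omega$.
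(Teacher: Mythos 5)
The paper does not actually prove this proposition: it is quoted from Yudovich \cite{yudo-given}, and your decomposition $u=u^\omega+u^g+u^{\mathcal C}$ together with the uniqueness argument via the stream function is exactly the classical construction underlying that reference (it is also the decomposition the paper itself reuses in Section \ref{s:1} through the functions $f^i$ and the operator $G[\,\cdot]$). The argument is sound as a scheme: the global single-valuedness of the stream function of the difference uses that its fluxes through the inner components vanish because $v\cdot n=0$; the invertibility of the map from the boundary constants of $\psi$ to the circulations is the standard nondegeneracy of the capacity matrix; and the parasitic circulations of $u^\omega$ and $u^g$ are indeed controlled by the data, e.g.\ $\int_{\Gamma_i}\pn\psi=\int_\Omega f^i\omega$ by Green's formula, hence by $\Vert\omega\Vert_{L^p(\Omega)}$.

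The one point you assert rather than prove is the heart of the proposition, and your attribution of it is slightly off. The bound $\Vert\psi\Vert_{W^{2,p}(\Omega)}\le Cp\,\Vert\omega\Vert_{L^p(\Omega)}$ for the Dirichlet problem in a bounded $C^2$ domain does not follow directly from Stein's $O(p)$ bound for the Riesz transforms on $\R^2$: one must represent $\nabla^2\psi$ through the Green's function of $\Omega$ and verify that the boundary-correction kernel also obeys a Calder\'on--Zygmund-type estimate with constant $O(p)$ for large $p$; this is precisely the content of Yudovich's elliptic papers \cite{yudo61,yudo66}, and without it the proposition (and Remark \ref{rk-c-p}) has no content. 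As long as that input is taken as a black box, your proof is complete. A minor remark: you do not need the harmonic piece $u^g$ to be bounded in $W^{1,p}$ uniformly in $p$ --- which via Schauder theory would ask slightly more of the merely $C^2$ boundary --- since \eqref{pbound} already carries the factor $p$, so an $O(p)$ bound coming from $L^p$ elliptic theory for the Neumann problem suffices.
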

\begin{rem} \label{rk-c-p}
The dependence with respect to $p$ in \eqref{pbound} is useful in Section \ref{sec-game}. Indeed  that the constant
 in front of the parenthesis in the right hand side  is bounded by $p$ for large $p$ 
 is crucial in Yudovich's proof of uniqueness in the impermeable case, as it allows to bypass the failure of the Lipschitz estimate of $u$, which is the main difficulty with respect to case of  classical solutions.  To overcome this difficulty, Yudovich used some \textit{a priori} bounds on  the $L^p$ norms of the vorticity.   We refer here to \cite{yudo1995,yudo2005} for more. 
\end{rem}

Finally it only remains to tackle the transport part \eqref{partie-vort} of the 
 system \eqref{partie-vort}-\eqref{ellip}-\eqref{partKelvin}. When considering this initial boundary value problem on the time interval $[0,T]$, with $T>0$, we are naturally led to a weak formulation corresponding to the following family of identities:  
\begin{equation}\label{e:wf-trans}
\int_{t_0}^{t_1}{\int_{\Omega}{\omega\left(\partial_t\phi+u\cdot\nabla\phi\right)}}
    = \left\lbrack \int_{\Omega}{\omega\phi} \right\rbrack_{t_0}^{t_1}
     + \int_{t_0}^{t_1}{\int_{\Gi}{\omein\phi  g}}
     + \int_{t_0}^{t_1}{\int_{\Go}{\omeout\phi  g}},
\end{equation}
for  $0\leq t_0<t_1 \leq T$,  and 
 for some  test functions $\phi$ to determine, and where the notation $[a]_{t_0}^{t_1}$ means 
\begin{equation}
\label{jump}
[a]_{t_0}^{t_1} := a(t_1)-a(t_0) .
\end{equation}
\begin{defn}\label{d:def-weaksol}
Let $T>0$. Let  $\omega_0\in L^{\infty}(\Omega)$, $(\mathcal{C}_{i,0})_{i\in I^*}\in \R^{I^*}$,
$g$ in $L^{\infty}([0,T] ;C^{1,\alpha}(\Gamma))$ 
with zero average on $\Gamma$ at every time,
and $\omein$ in $  L^{\infty}([0,T] ; L^{\infty}(\Gi, g))$.
We say that 
\begin{equation}
\label{xprime}
(\omega,\omeout)\in C^{0}([0,T] ; L^{\infty}(\Omega) - w_*) \times L^{\infty}([0,T] ;L^{\infty}(\Go, |g|)),
\end{equation}
is a weak solution to the  system \eqref{partie-vort}-\eqref{ellip}-\eqref{partKelvin}
when  for every test function $\phi\in H^1([0,T] \times\Omega;\R)$, and for  every $t_0<t_1$, 
the equality \eqref{e:wf-trans} holds true with $u$  given, at time $t$ in $[0,T]$, as the unique solution of the system \eqref{ellip} given by Proposition  \ref{p:yudo-ell-bound}, 
where the circulations are given by \eqref{circu-in}-\eqref{circu-out}.
\end{defn}
Let us emphasize that the appearance of $g$ in the notation $L^{p}(\Gamma,  |g|)$ refers to the measure $|g|  \mathcal{H}^1$, where $\mathcal{H}^1$ is the one-dimensional Haussdorf measure on $\Gamma$. On the other hand the notation
$L^{\infty}(\Omega) - w_* $ in \eqref{xprime} 
stands for the space $L^{\infty}(\Omega)$ equipped with
the weak star topology. 
\begin{rem} \label{rem-trace}
The article \cite{Boyer} by Boyer allows to give a sense to the trace of $\omega$ on the permeable part of the boundary, that is where $g\neq 0$, for solutions, in the sense of distributions, of the transport equation \eqref{e:trans-fort}.  
Therefore, when it does not lead to confusion, we use the notation $\omega$ instead of  $\omein$ or $\omeout$  on the corresponding parts $\Gi$ and $\Go$ of the boundary. 

It also establishes that the assumption regarding the  continuity in time in \eqref{xprime}  is not restrictive. 
\end{rem}

The existence of weak solutions to the  system \eqref{partie-vort}-\eqref{ellip}-\eqref{partKelvin}  in the sense of Definition \ref{d:def-weaksol} has been obtained in 
\cite[Theorem 1]{Uvarovskaya}, see also \cite[Theorem 3]{BS} for a slightly different proof, and for some other existence results of weaker solutions.

%%%%%%%%%%%%%%%%%%%%%%%%%%%%%%%%%%%%%%%%%%%%%%%%%%%%%%%%%%%%
\section{Statement of the main result}
\label{sec-st}

This section is devoted to  the statement of the main result of the paper. We also make a few remarks about it and finally explain how is organized of the rest of the paper.

\subsection{Statement of the main result and a few remarks}
The following result establishes a quantitative stability estimate for weak solutions  to the  system \eqref{partie-vort}-\eqref{ellip}-\eqref{partKelvin}
  in the sense of Definition \ref{d:def-weaksol},  corresponding to  different initial and boundary data, which in particular implies uniqueness for solutions  corresponding to the same initial and boundary data. 
\begin{thm} \label{main-thm}
Let $T>0$ and two weak solutions to the  system \eqref{partie-vort}-\eqref{ellip}-\eqref{partKelvin} as in Definition \ref{d:def-weaksol} corresponding to $T$, to the same boundary data $g$ for the normal velocity, and to some possibly different data.
We denote those initial and boundary data  respectively 
  $\omein^1$ and $\omein^2$ for the entering vorticity;  
  $(\mathcal{C}^1_{i,0})_{i\in I^*}$ and $(\mathcal{C}^2_{i,0})_{i\in I^*}$ for the initial circulations;
  and $\omega^1_0$ and $\omega^2_0$ for the initial vorticity. 
Let $u^1$ and $u^2$ the corresponding velocities as given by Proposition  \ref{p:yudo-ell-bound}. 
Then  there exists a continuous function
$F: (\R_+)^3  \times \R^{I^*} \mapsto \R_+$ satisfying 
$F(t,0,0,0)=0$ for all $t \geq 0$, 
such that  for all $t$ in  $[0,T]$,
\begin{gather} \label{stab-est}
     \Vert u^1(t,.) - u^2(t,.) \Vert_{L^2 (\Omega)} 
      +  \int_{0}^{t} \int_{\Gamma} | u^1 - u^2  |^2| g|
      \\ \nonumber \quad \leq 
F\big(t,\Vert\omega^1_0-\omega^2_0\Vert_{L^{\infty}(\Omega)} , \Vert\omein^1-\omein^2\Vert_{L^{\infty}([0,t];L^{\infty}(\Gi,|g|))},
(\mathcal{C}^1_{i,0} - \mathcal{C}^2_{i,0})_{i\in I^*} \big).
\end{gather}
In particular,  for some given initial and boundary data, there exists an unique weak solution   to the  system \eqref{partie-vort}-\eqref{ellip}-\eqref{partKelvin} in the sense of Definition \ref{d:def-weaksol}. 
\end{thm}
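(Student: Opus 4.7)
The plan is to follow the Weigant--Papin two-estimate strategy: combine a kinetic energy identity for the velocity difference $w := u^1 - u^2$ with an auxiliary estimate that controls the trace of $w$ on the inflow boundary, and close via the Yudovich--Osgood interpolation based on Proposition~\ref{p:yudo-ell-bound}. Since $g$ is common to both solutions, $w \cdot n = 0$ on $\Gamma$, and subtracting the two Euler momentum equations yields
$$ \partial_t w + (u^1 \cdot \nabla) w + (w \cdot \nabla) u^2 + \nabla P = 0, \qquad \div w = 0, $$
while $\partial_t \omega + u^1 \cdot \nabla \omega + w \cdot \nabla \omega^2 = 0$ for $\omega := \omega^1 - \omega^2$, with prescribed inflow trace $\omein^1 - \omein^2$ controlled by the data.

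Testing the momentum equation against $w$, using $\div w = 0$ and $w \cdot n = 0$ to cancel the pressure, one obtains
$$ \tfrac{1}{2} \tfrac{d}{dt} \|w\|_{L^2}^2 + \tfrac{1}{2} \int_{\Go} |g| |w|^2 = \tfrac{1}{2} \int_{\Gi} |g| |w|^2 - \int_\Omega w \cdot (w \cdot \nabla) u^2. $$
The last term is handled by the Yudovich scheme: the sharp $W^{1,p}$ estimate~\eqref{pbound} (with its $p$ factor), combined with the uniform bound $\|w\|_{L^\infty} \leq C$ from bounded vorticity via $L^\infty$ Biot--Savart and an $L^2$--$L^\infty$ interpolation, gives $|\int w \cdot (w \cdot \nabla) u^2| \leq C p \|w\|_{L^2}^{2(p-1)/p}$ for every $p \geq 2$. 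The outflow boundary term on the left is favorable, while the inflow boundary term on the right is the main obstruction; it must be absorbed via an auxiliary estimate.

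For the auxiliary estimate, introduce the stream function $\Psi$ of $w$ defined by $-\Delta \Psi = \omega$ in $\Omega$ with $\Psi|_{\Gamma_i} = c_i(t)$, the constants $c_i$ being fixed so that the circulation conditions match the differences $\mathcal{C}_i^1 - \mathcal{C}_i^2$ (themselves controlled by Kelvin's law~\eqref{e:Kelvin} and the data). Integration by parts yields $\|w\|_{L^2}^2 = \int_\Omega \Psi \omega + \sum_{i \in I^*} c_i (\mathcal{C}_i^1 - \mathcal{C}_i^2)$, and the symmetry of the elliptic problem produces the clean identity $\tfrac{d}{dt} \|w\|_{L^2}^2 = 2 \int_\Omega \Psi \, \partial_t \omega + (\text{circulation terms})$. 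Substituting the transport equation, integrating by parts, and using the pointwise identity $w \cdot \nabla \Psi \equiv 0$ (because $w = \nabla^\perp \Psi$) together with the known traces $\omega|_{\Gi} = \omein^1 - \omein^2$ (data) and $\omega|_{\Go}$ linked to the derivatives of the circulations by~\eqref{e:Kelvin} produces a second identity featuring precisely the integral $\int_{\Gi} |g| |w|^2$ with a sign compatible with absorption into the kinetic energy estimate, up to remainders quadratic in the data differences.

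Combining the two estimates eliminates the troublesome inflow boundary term and yields an Osgood-type inequality
$$ \tfrac{d}{dt} Y(t) + \int_\Gamma |g| |w(t)|^2 \leq C p \, Y(t)^{1 - 1/p} + \delta_0^2, \qquad p \geq 2, $$
where $Y(t) := \|w(t)\|_{L^2}^2 + \sum_{i \in I^*} |\mathcal{C}_i^1(t) - \mathcal{C}_i^2(t)|^2$ and $\delta_0$ encodes all initial and boundary data differences. Optimizing $p$ and invoking the Osgood--Yudovich lemma furnishes the continuous modulus $F$ and the stability estimate~\eqref{stab-est}, from which uniqueness under coinciding data follows since $F(t, 0, 0, 0) = 0$. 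The main difficulty lies in the third step: unlike the rectangular Weigant--Papin geometry with lateral inlet and outlet, the multiply-connected smooth domain forces the simultaneous handling of several interior components $\Gamma_i$, the coupling between the unknown outflow traces $\omeout^j$ of the vorticity and the Kelvin laws~\eqref{e:Kelvin} for the circulations, and a rigorous use of the Boyer trace theory (see Remark~\ref{rem-trace}) to make sense of the auxiliary identity at the weak-solution level.
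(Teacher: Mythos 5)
There is a genuine gap at the heart of your third step. The ``auxiliary estimate'' you propose---writing $\|w\|_{L^2}^2$ in terms of the stream function $\Psi$ of $w$, differentiating in time, and substituting the transport equation---is not a second, independent estimate: it is exactly the same computation as the kinetic energy identity, merely performed on the vorticity side of the duality $\int_\Omega |w|^2 = -\int_\Omega \Psi\,\omega + (\text{circulation terms})$. Testing the weak vorticity formulation with $\Psi$ reproduces, term by term, the energy identity of your second step, and in particular it produces the inflow term $\int_{\Gi}|g||w|^2$ with the \emph{same unfavorable sign} as before. No absorption can come out of it. The actual mechanism (due to Weigant--Papin, and the whole point of the argument) is to test with a \emph{different} function: the solution $\tilde{\varphi}$ of a Zaremba-type mixed problem, harmonic in $\Omega$, with $\tilde{\varphi}=0$ on $\Go\cup\Gamma_0$ and the inhomogeneous Neumann condition $\pn\tilde{\varphi}=-\pn\tilde{\psi}$ on $\Gi$. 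This Neumann condition forces $\nabla^{\perp}\tilde{\varphi}\cdot\tau=-w\cdot\tau$ on $\Gi$, so that when the convective term $\int_\Omega \omega\,\hat{u}\cdot\nabla\tilde{\varphi}$ is unfolded by a Lamb-type integration-by-parts identity, the boundary contribution $\int_{\Gamma}(w\cdot\nabla^{\perp}\tilde{\varphi})\,g$ contains $+\int_{\Gi}|w|^2(-g)$, i.e.\ the bad term now appears on the favorable side and can be used to cancel the one in the kinetic energy estimate. Your proposal contains no object playing this role, so the combination step cannot be carried out.

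Two further points would still need work even with the correct auxiliary function. First, the solutions are only defined through the weak vorticity formulation, so the momentum equation with a pressure is not available; one must instead justify that $\tilde{\psi}$ and $\tilde{\varphi}$ are admissible test functions, which requires proving $H^1$-in-time regularity of both (the paper's Propositions \ref{prop-f} and \ref{p:aux-test-reg}) and, crucially, the quantitative bound \eqref{desire} on the boundary traces $\tilde{\psi}_i'$, needed to control the circulation coupling terms $\sum_i\tilde{\psi}_i'\tilde{\mathcal{D}}_i$ in the multiply-connected setting. Second, the remaining outflow boundary term $\int_{\Go}(w\cdot\nabla^{\perp}\tilde{\varphi})\,g$ must be absorbed using a H\"ormander-type trace inequality bounding $\int_{\Go}|\pn\tilde{\varphi}|^2$ by $\|\nabla\tilde{\varphi}\|_{L^2(\Omega)}^2$. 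Your overall architecture (energy identity, Yudovich interpolation with the $Cp$ growth from \eqref{pbound}, Osgood closure) matches the paper, but without the Zaremba function the proof does not close.
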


\begin{rem}
The results of Theorem \ref{main-thm} can be extended to the slightly more general case where the vorticity is unbounded but with a moderate growth of its $L^p$ norms, as it was done by Yudovich in \cite{yudo2005,yudo1995}. Indeed the limitation in terms of regularity assumption on the vorticity is due to interior terms, which are treated similarly in the permeable and in the impermeable case. As a matter of fact, to extend Theorem \ref{main-thm} to the same setting as in \cite{yudo2005,yudo1995}, it is sufficient to adapt the Osgood argument used  in section \ref{sec-game}. 
\end{rem}

\begin{rem}
Theorem \ref{main-thm} can be extended to the case where the boundary datum $g$ for the normal velocity on the boundaries of the sources and of the sinks oscillates in time, as long as the sign is the same, for  every time, on each connected component. It is therefore possible to consider some cases where a inner connected component of the  boundary is at some time a source and at another time a sink.  On the other hand, transitions  between an inflow part and an outflow part in space, on the same connected component of a boundary, seem to require careful studies. Let us highlight  that such transitions occur in \cite{pap-weig:ini},  with right angles at the places where transitions occur.
\end{rem}

\begin{rem}
Let us insist on the fact that the stability estimate \eqref{stab-est} concerns two solutions to the  system \eqref{partie-vort}-\eqref{ellip}-\eqref{partKelvin}
corresponding to the same boundary data $g$ for the normal velocity. 
The analysis performed below does not consider the issue of the stability of the solutions to the  system \eqref{partie-vort}-\eqref{ellip}-\eqref{partKelvin} with respect to perturbations of $g$. This issue is of interest with respect to the  stabilization  issue \cite{coron3,glass2001}.
\end{rem}

\begin{rem}
In the impermeable case, several proofs of uniqueness are available, with quantitative estimates in different topologies for different quantities. 
More precisely, while the original proof by Yudovich in \cite{yudo63} relies on an energy estimate, that is on the $L^2$ norm of the fluid velocity,
some alternative proofs have been since found, in particular thanks to a Lagrangian viewpoint in  \cite[Theorem 3.1]{Marc-Pulv} by Marchioro and Pulvirenti, where the $L^1$ norm of the flow map is used, and thanks to tools of optimal transportation theory in \cite{Loeper} by Loeper with the  Wasserstein distance $W_2$ and in \cite{Hauray} by Hauray with the  Wasserstein distance $W_\infty$.
While the proof of Theorem \ref{main-thm} given below uses an energy-type argument,  it would be interesting to investigate whether an alternative proof of Theorem \ref{main-thm}  based on the Lagrangian viewpoint could also be carried on. 
\end{rem}

\begin{rem}
\label{rem-decomp}
Observe that  Theorem \ref{main-thm} implies in particular an energy estimate for a (single) weak solution  to the  system \eqref{partie-vort}-\eqref{ellip}-\eqref{partKelvin}
  in the sense of Definition \ref{d:def-weaksol},  by considering the solution  to the  system \eqref{partie-vort}-\eqref{ellip}-\eqref{partKelvin} corresponding to the same initial circulations and to the same prescribed trace $g$ for the normal velocity but with zero initial and entering vorticity, and therefore with zero vorticity in $\Omega$ and on $\Go$ at any time.
\end{rem} 
 
\begin{rem}
 Theorem \ref{main-thm} implies in particular the uniqueness of the unstationary Euler system with prescribed normal velocity and entering vorticity. 
Let us recall that, on the other hand, it is known that  solutions of the 2D stationary Euler system with prescribed normal velocity and entering vorticity are not in general unique (see e.g. \cite{Troshkin}).
\end{rem}

\begin{rem}
A natural question is whether the result in  Theorem \ref{main-thm} and the method used in its proof  can be extended to some other nonlinear evolution PDEs with non-conservative boundary conditions, in particular to the ones which share the same features to couple transport and non-local features. A candidate in this direction is the Camassa-Holm equation, set on a finite interval with some 
inhomogeneous boundary conditions  as considered in \cite{Perrollaz}. An open question left in this paper was the uniqueness of the weak solutions obtained in \cite[Theorem 1]{Perrollaz}. 
\end{rem}

\begin{rem}
Finally let us mention that the weak-strong uniqueness property is a natural issue which has not been investigated yet for the  system \eqref{partie-vort}-\eqref{ellip}-\eqref{partKelvin}. In the case of impermeable boundaries, it is associated with the notion of dissipative solutions, see 
\cite[chapter 4.4]{Lions}. Recently the effect of an impermeable boundary  on the issue of the weak-strong uniqueness 
has been investigated by various authors, we refer here to the survey \cite{Wiedemann}
for more on the subject  and it would be therefore interesting to extend these investigations to the case of permeable boundaries. 

\end{rem}

\subsection{\textbf{Strategy of the proof of Theorem \ref{main-thm} and organization of the rest of the paper.}}

The rest of the paper is devoted to the proof of Theorem \ref{main-thm} which compares the difference of two solutions of the Euler equations in presence of sources and sinks.
The set-up of the proof of Theorem \ref{main-thm}, with the derivations of the equations satisfied by the difference of two solutions is done in Section \ref{s:1}. 
The initial idea is to perform an energy estimate from the weak vorticity formulation by using a stream function of the difference as a test function.
In the case where the fluid occupies the whole plane, with nice decay at infinity, this corresponds to the identity: 
\begin{equation*} \label{ pseudoe}
\int_{\mathbb R^2} u \cdot u = - \int_{\R^2} \psi \omega ,  \quad  \text{ where } \quad \omega = \curl u = \Delta \psi  .
\end{equation*}
This allows to bypass the velocity formulation which has some unpleasant features in the permeable case, in particular due to the pressure.  
A technical difficulty in this process is to justify that the stream function associated with the difference of the two solutions at stake is regular enough to be taken as a test function in the weak formulation of the equation. 
The estimate of the stream function of the difference is performed in Section \ref{sec-tPsi}.
The most delicate part is to obtain some estimates of the time-derivative of the stream function. 
This is accomplished thanks to the weak vorticity formulation with some other appropriate particular test functions associated with the geometry. 
Then the energy estimate of the difference is performed in Section \ref{p:ener-eq}.
A difficulty is the presence of a "bad" boundary term corresponding to "the energy entering at the sources". 
Because of its sign, it is needed to bound this term for the energy estimate  to be conclusive. 

A great idea, first used by Weigant and Papin in the case where the fluid domain is a rectangle with lateral inlet and outlet, see \cite{pap-weig:ini},
is to couple this energy estimate with a second one, obtained with the help of an appropriate test function. 
As in \cite{pap-weig:ini},  we consider a harmonic test function with mixed boundary conditions, more precisely with Neumann inhomogeneous conditions on the boundary of the sources and with zero Dirichlet condition on the rest of the boundary.
However, compared  to the case of \cite{pap-weig:ini} where this test function is constructed and estimated thanks to some  Fourier series,
the construction and the estimates in the present case requires more works, which are done in Section \ref{s:2}. 
In  Section \ref{Lamb-G}, we state a generalized Lamb's lemma which tackles trilinear integrals  by some appropriate vector calculus identities and some integrations by parts.
This allows to clarify the treatment of some convective terms in the second energy estimate associated with the auxiliary harmonic test function.
The auxiliary  energy-type estimate is performed in Section \ref{sec-aux-esti}. 
Finally, we combine the two energy estimates in Section \ref{sec-game} and concludes the proof of Theorem \ref{main-thm} by Osgood's lemma.

%%%%%%%%%%%%%%%%%%%%%%%%%%%%%%%%%%%%%%%%%%%%%%%%
\section{Equations satisfied by the difference of two solutions}
\label{s:1}

To prove Theorem \ref{main-thm}, we use an energy method to compare the dynamics of two weak solutions of the Euler equation in the sense of  Definition \ref{d:def-weaksol} corresponding to $T>0$. 
To that extent, let us fix two solutions $(\omega^1,\omeout^1)$ and $(\omega^2,\omeout^2)$ of the Euler equation, with initial and boundary conditions %
$$(\omega_0^1,(\mathcal{C}_{i,0}^1)_{i\in I^*},g,\omein^1)\quad \text{ and } \quad
(\omega_0^2,(\mathcal{C}_{i,0}^2)_{i\in I^*},g,\omein^2).$$
We consider $u^1$ and $u^2$ the  velocities which are respectively associated to the previous quantities by Proposition  \ref{p:yudo-ell-bound}.
We use the following notations:
\begin{align*}
\tomega := \omega^1-\omega^2  \quad \text{ and } \quad
\homega  := \frac{\omega^1+\omega^2}{2},
\\ \tu   := u^1-u^2 \quad \text{ and } \quad
\hu  := \frac{u^1+u^2}{2},
\end{align*}
By linearity of \eqref{e:u-incomp}
and \eqref{partKelvin} the vector field $\tu   $ satisfies, at every time,  the 
system:
\begin{subequations}
\label{ellip-diff}
\begin{align}
\label{dusoleil}
\div \tu     &= 0                         &\text{ in } \Omega, \\
\curl \tu    &=    \tomega                &\text{ in } \Omega, \\
\label{imp-diff} \tu   \cdot n &= 0                      &\text{ on } \Gamma, \\
\label{circu-u-diff} 
\int_{\Gamma_i}{ \tu   \cdot \tau} &= \tilde{\mathcal{C}}_i   &\text{  for all } i\in I^*, 
\end{align}
\end{subequations}
where, for $i$ in $I^*$,  
the circulation $\tilde{\mathcal{C}}_i$ of $\tu  $ around $\Gamma_i$ which satisfies at any $t\in [0,T]$, 
\begin{equation}
\label{noel}
\tilde{\mathcal{C}}_i(t) = \tilde{\mathcal{C}}_{i,0} -\int_{0}^{t}{\int_{\Gamma_i}{\tomega g}}, 
\text{ with } \tilde{\mathcal{C}}_{i,0} :=
\mathcal{C}_{i,0}^1 - \mathcal{C}_{i,0}^2 ,\quad
\text{ for } i\in I^*.
\end{equation}
Observe that, as hinted in Remark  \ref{rem-trace}, we use the notation $\tomega$ in the first term of the right hand side of \eqref{noel} instead of  $\tomein :=\omein^1 - \omein^2$ or $\tomeout :=  \omeout^1 -  \omeout^2$  on the corresponding parts $\Gi$ and $\Go$ of the boundary $\Gamma$.

Using the Hodge-De Rham theory, we decompose, at every time,  the vector field $\tu  $ into two types of contributions respectively corresponding to the circulations and to the vorticity.
\begin{itemize}
\item 
On the one hand, to tackle the effect of the circulations,  we consider, for $i$ in $I^*$, the functions $f^i$, which are the unique solutions in $C^2  (\overline \Omega ) $ of the following boundary value problem:
\begin{align}\label{def-f}
\Delta f^i = 0 \text{ on } \Omega,  \quad \text{and} \quad
f^i = \delta_{i,j} \text{ on } \Gamma_j,
\end{align}
where $\delta_{i,j}$ is  equal to $1$ when $i=j$ and to $0$ otherwise.
\item  
On the other hand, we consider, for any given bounded function $\omega$, 
the unique solution $G[\,\omega]$ of the following boundary value problem:
\begin{align}\label{def-Gd}
\Delta G[\,\omega] = \omega \text{ on } \Omega,  \quad \text{and} \quad 
G[\,\omega] = 0 \text{ on } \Gamma ,
\end{align}
and we set 
$$K[\,\omega]:= \nabla^{\perp} G[\,\omega].$$  
\end{itemize}
It is a classical result, see for instance \cite{Kato}, of the Hodge-De Rham theory that there exists some real-valued functions $\tilde{\psi}_i = \tilde{\psi}_i (t)$, for $i\in I^*$, such that, at every time,  
\begin{equation}
\label{e:u-nab-perp-psi}
\tu (t,\cdot)  = \nabla^{\perp} \tilde{\psi} (t,\cdot) \quad  \text{ in } \,  \Omega,
\end{equation}
where
\begin{equation}\label{e:psi-decomp}
\tilde{\psi} (t,\cdot):= G[\, \tomega(t,\cdot)] + \sum_{i\in I^*}{\tilde{\psi}_i (t) f^i} .
\end{equation}
Let us observe that, as a consequence of \eqref{dusoleil} and  \eqref{e:u-nab-perp-psi}, %
\begin{equation}
\label{delta-hat} \Delta \tilde{\psi} = \tomega     \quad \text{ in } \Omega, 
\end{equation}
and, as a consequence of \eqref{def-f} and \eqref{def-Gd}, 
\begin{equation}
\tilde{\psi}_{|\Gamma_i} = \tilde{\psi}_i  , \quad \text{  for all } \,  i\in I \label{aubord} ,
\end{equation}
where we set  $\tilde{\psi}_0 := 0$. 
 Combining \eqref{circu-u-diff} and  \eqref{e:u-nab-perp-psi}, 
we arrive at 
\begin{equation}
\label{circu-psi-diff}
\int_{\Gamma_i}{ \partial_n \tilde{\psi} } = \tilde{\mathcal{C}}_i   \quad \text{  for all } \, i\in I^* .
\end{equation}
 Moreover, for all test function $\phi\in H^1\left([0,T]\times \Omega\right)$,  and for  every $0 \leq t_0<t_1 \leq T$, 
\begin{equation}\label{e:weak-trans-diff}
\int_{t_0}^{t_1}{\int_{\Omega}{\left(\tomega\partial_t \phi + \tomega \hu \cdot \nabla \phi  +\homega \tu  \cdot  \nabla \phi\right)}}
    = \int_{t_0}^{t_1}{\int_{\Gamma}{\phi \tomega g}}
    + \left[\int_{\Omega}{\tomega \phi} \right]_{t_0}^{t_1} .
\end{equation}
On the other hand, the half-sum $\hu $ satisfies 
\begin{equation}
\label{imp-moy}
\hu \cdot n    = g                \quad \text{ on } \Gamma .
%, \\ \label{sertoupa} \homega  &=  \tilde{\omein}   &\text{ on } \Gamma .
\end{equation}
%

%%%%%%%%%%%%%%%%%%%%%%%%%%%%%%%%%%%%%%%%%%%%%%%%%%%%%%
\section{Estimate of the stream function of the difference}
\label{sec-tPsi}

This section is devoted to the regularity of the stream function $\tilde{\psi}$. 
The main result of this section reads as follows. 
\begin{prop} \label{prop-f}
The stream function $\tilde{\psi}$ 
is in the space $C^0 ( [0,T] ;W^{2,p}(\Omega))$ for all $p$ in $[1,+\infty)$ and 
 in the space $H^1 ([0,T] ; H^1 (\Omega ))$.
Moreover, there exists $C>0$ such that 
for $i\in I$, the time derivative of the trace $\tilde{\psi}_i$ of $\tilde{\psi}$ on~$\Gamma_i$ satisfies 
 for  $0 \leq t_0<t_1 \leq T$,
\begin{equation} \label{desire}
\int_{t_0}^{t_1}{|\tilde{\psi}_i'|^2}
    \leq C
       \left(
       \int_{t_0}^{t_1}{\Vert  \tu  \Vert_{L^{2} (\Omega)}^2}
       +\int_{t_0}^{t_1}{\Vert  \tu  \Vert_{L^2(\Gamma, |g|)}^2}
       \right).
\end{equation}
\end{prop}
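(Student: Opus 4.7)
The plan is to exploit the decomposition \eqref{e:psi-decomp} of $\tilde{\psi}$ into an interior Green's function part $G[\tomega]$ and a boundary part $\sum_{i\in I^*}\tilde{\psi}_i f^i$. Since $\tomega\in L^\infty(\Omega)\subset L^p(\Omega)$, classical elliptic regularity applied to \eqref{def-Gd} yields $G[\tomega](t,\cdot)\in W^{2,p}(\Omega)$ for every $p<\infty$, while the $f^i$ are smooth. Plugging \eqref{e:psi-decomp} into the circulation condition \eqref{circu-psi-diff} yields the linear system
\[
\sum_{j\in I^*} M_{ij}\,\tilde{\psi}_j(t)\;=\;\tilde{\mathcal{C}}_i(t)-\int_{\Gamma_i}\partial_n G[\tomega(t,\cdot)],\qquad i\in I^*,
\]
where $M_{ij}:=\int_{\Gamma_i}\partial_n f^j=\int_\Omega\nabla f^i\cdot\nabla f^j$ by Green's identity. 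Linear independence of the $f^i$ (which have distinct boundary values on the components of $\Gamma$) makes $M$ symmetric positive definite, hence invertible. Combined with the time continuity of $\tomega$ in $L^\infty$-w* and of $\tilde{\mathcal{C}}_i$ from \eqref{noel}, this yields $\tilde{\psi}\in C^0([0,T];W^{2,p}(\Omega))$.

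The central step is the estimate \eqref{desire}. The idea is to test \eqref{e:weak-trans-diff} against the time-independent test function $\phi=f^i$. Using Green's identity together with $\Delta f^i=0$, $f^i|_{\Gamma_j}=\delta_{ij}$, $\tilde{\psi}|_{\Gamma_j}=\tilde{\psi}_j$ and \eqref{circu-psi-diff}, one computes $\int_\Omega \tomega\, f^i=\tilde{\mathcal{C}}_i-\sum_{j\in I^*}M_{ij}\,\tilde{\psi}_j$. Since $\int_\Gamma f^i\tomega g=\int_{\Gamma_i}\tomega g$ and $\tilde{\mathcal{C}}_i'=-\int_{\Gamma_i}\tomega g$ from \eqref{noel}, these two contributions cancel inside \eqref{e:weak-trans-diff}, leaving
\[
\sum_{j\in I^*} M_{ij}\bigl[\tilde{\psi}_j\bigr]_{t_0}^{t_1}\;=\;-\int_{t_0}^{t_1}\!\int_\Omega\bigl(\tomega\,\hu+\homega\,\tu\bigr)\cdot\nabla f^i.
\]
Once the integrand is bounded by an $L^2$-in-time quantity, this will make $\tilde{\psi}_i\in H^1([0,T])$ with the pointwise identity $\sum_j M_{ij}\tilde{\psi}_j'(t)=-\int_\Omega(\tomega\hu+\homega\tu)(t,\cdot)\cdot\nabla f^i$ valid for almost every $t$.

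To bound the right-hand side I would rely on the 2D Lamb-type identity $\omega u=\nabla^\perp(|u|^2/2)-(u\cdot\nabla)u^\perp$, which after linearization in $(\hu,\tu,\homega,\tomega)$ yields
\[
\tomega\,\hu+\homega\,\tu\;=\;\omega^1 u^1-\omega^2 u^2\;=\;\nabla^\perp(\hu\cdot\tu)-(\hu\cdot\nabla)\tu^\perp-(\tu\cdot\nabla)\hu^\perp.
\]
The first piece tested against $\nabla f^i$ equals $\int_\Gamma(\hu\cdot\tu)\,\partial_\tau f^i$ (via $\nabla^\perp h\cdot\nabla f=\curl(h\nabla f)$ and Stokes) and \emph{vanishes} since $f^i$ is constant on each connected component of $\Gamma$. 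Integration by parts of $(\hu\cdot\nabla)\tu^\perp\cdot\nabla f^i$ (using $\div\hu=0$) produces an interior term controlled by $\|\hu\|_{L^\infty}\|\tu\|_{L^2}\|\nabla^2 f^i\|_{L^2}$ and a boundary term $\int_\Gamma g\,(\tu^\perp\cdot\nabla f^i)$ bounded by $C\|\tu\|_{L^2(\Gamma,|g|)}$ through Cauchy--Schwarz with weight $|g|$, after observing that $\tu\cdot n=0$ makes $\tu^\perp$ collinear with $n$ on $\Gamma$. Integration by parts of $(\tu\cdot\nabla)\hu^\perp\cdot\nabla f^i$ (using $\div\tu=0$ and $\tu\cdot n=0$) leaves only an interior term controlled by $\|\tu\|_{L^2}$. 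Inverting $M$, squaring and integrating in time then yields \eqref{desire}.

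For the remaining claim $\tilde{\psi}\in H^1([0,T];H^1(\Omega))$, the boundary piece $\sum_i\tilde{\psi}_i' f^i$ lies in $L^2([0,T];C^2(\overline\Omega))$ by \eqref{desire}. For $\partial_t G[\tomega]$, I would test \eqref{e:weak-trans-diff} against arbitrary $\eta\in H^1_0(\Omega)$ (for which the boundary term in \eqref{e:weak-trans-diff} drops as $\eta|_\Gamma=0$) to show $\|\partial_t\tomega\|_{H^{-1}(\Omega)}\leq C(\|\tomega\|_{L^2}+\|\tu\|_{L^2})$ uniformly in $t$; then $\partial_t G[\tomega]$ solves the Poisson problem $\Delta v=\partial_t\tomega$ with zero Dirichlet datum, so Lax--Milgram gives $\partial_t G[\tomega]\in L^\infty([0,T];H^1_0(\Omega))$, completing the proof. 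The main obstacle throughout will be the justification of the boundary integrations by parts above despite the limited regularity of $\tu$ and $\hu$ at $\Gamma$; one leans on $u^k\in W^{1,p}$ for every $p<\infty$ (giving $u^k\in C^{0,\alpha}(\overline\Omega)$) for classical traces of the velocities, and on Boyer's trace theory \cite{Boyer} recalled in Remark~\ref{rem-trace} to make sense of the vorticity trace in the weak formulation.
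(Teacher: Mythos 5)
Your proposal is correct and takes essentially the same route as the paper: test the weak vorticity formulation against harmonic functions that are constant on the components of $\Gamma$, observe that the circulation contributions cancel, move derivatives off the velocities by integration by parts, and control the remaining boundary term by the Cauchy--Schwarz inequality with weight $|g|$. The differences are only organisational: the paper works with the dual basis $g^i$ normalised by $\int_{\Gamma_j}\pn g^i=-\delta_{i,j}$ rather than inverting your Gram matrix $M$, and integrates by parts directly via $\tomega=\div \tu^{\perp}$ rather than through your polarized Lamb identity, both computations producing the same boundary term $\int_{\Gamma}(\tu\cdot\tau)\,g\,\pn f^i$ and the same interior bounds.
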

Proposition \ref{prop-f} is useful in  Section \ref{p:ener-eq} to obtain an energy estimate, see Proposition \ref{t:energ-eq}, 
as it allows us to apply \eqref{e:weak-trans-diff} to the case where the test function is the stream function $\tilde{\psi}$.
On the other hand the estimate \eqref{desire} is useful in Section    \ref{sec-aux-esti}. Let us highlight that it follows from the regularity of the solutions at stake that the right hand side of \eqref{desire} is finite.

\begin{proof}
The  fact that  the function $\tilde{\psi}$ is in the space  $C^0 ([0,T];W^{2,p}(\Omega))$ for all
 for all $p$ in $[1,+\infty)$ can be found in \cite[Lemma 1.4]{yudo-given}.
  
We now turn to the estimate of the time derivative of  $(\tilde{\psi}_i)_i$, for  $i\in I$.
For $i=0$, we have by definition that $\tilde{\psi}_0=0$, so that
 \eqref{desire} holds true in this case with any constant $C>0$ and 
it is therefore sufficient to deal with the case where  $i\in I^*$.
To this aim, we recall that there exist some  constants $(g^i_j)_{i\in I^* , \, j\in I}$, with $g_0^i = 0$,  and some smooth functions $(g^i)_{i\in I^*}$ such that
\begin{subequations} \label{def-g}
\begin{align}
\Delta g^i &= 0 \quad \text{ on } \Omega, \\
g^i &= g^i_j \quad   \text{ on } \Gamma_j ,\quad  \text{ for } j\in I , \label{def-g2} \\
\int_{\Gamma_j}{ \pn g^i } &= - \delta_{i,j} \quad  \text{ for } j\in I^* .
\end{align}
\end{subequations}
These functions can be obtained by some appropriate linear combinations of the functions $f^i$, which are defined in \eqref{def-f}, we refer here again to \cite{Kato} for more details.

\begin{lem} \label{lemmaprime}
For any $i$ in $I^*$, the time derivative  $\tilde{\psi}'_i$ is in the space $L^2(0,T)$ and 
there exists $C>0$ such that for any $0\leq t_0<t_1 \leq T$, the estimate \eqref{desire} holds true.
\end{lem}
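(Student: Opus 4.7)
The plan is to derive an explicit formula for $\tilde{\psi}_i$ in terms of $\tomega$ and the circulations $\tilde{\mathcal{C}}_j$, differentiate it in time via the weak vorticity formulation \eqref{e:weak-trans-diff}, and then rewrite the resulting contributions into quantities controllable by the norms of $\tu$ on the right-hand side of \eqref{desire}. The key tool is the auxiliary function $g^i$ of \eqref{def-g}, designed so as to be harmonic, piecewise-constant on $\Gamma$, and with prescribed normal flux on each $\Gamma_j$.

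First I would apply Green's second identity to the pair $(g^i, \tilde{\psi})$, using $\Delta g^i = 0$ and $\Delta \tilde{\psi} = \tomega$, together with the piecewise-constant traces and \eqref{circu-psi-diff}, \eqref{def-g}, to obtain the identity
\[
\tilde{\psi}_i(t) \;=\; \int_\Omega g^i\,\tomega(t,\cdot) \;-\; \sum_{j\in I^*} g^i_j\,\tilde{\mathcal{C}}_j(t).
\]
To differentiate this between two instants $t_0 < t_1$, I would handle the circulation term directly through \eqref{noel}, and the interior term by plugging the time-independent test function $\phi = g^i$ into \eqref{e:weak-trans-diff}. Both contribute boundary integrals of the form $g^i_j \int_{\Gamma_j}\tomega g$, which cancel exactly by the construction of $g^i$ (recalling $g^i_0 = 0$ and $g \equiv 0$ on $\Gamma_0$). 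What remains is
\[
[\tilde{\psi}_i]_{t_0}^{t_1} \;=\; \int_{t_0}^{t_1}\!\int_\Omega \bigl(\tomega\,\hu\cdot\nabla g^i + \homega\,\tu\cdot\nabla g^i\bigr).
\]

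The next step is to turn the $\tomega$ integrand into something featuring $\tu$. Using $\tomega = \curl \tu$, a planar integration by parts gives, with $F := \hu\cdot\nabla g^i$,
\[
\int_\Omega \tomega\,F \;=\; -\int_\Omega \tu\cdot\nabla^\perp F \;+\; \int_\Gamma F\,(\tu\cdot\tau).
\]
The crucial geometric fact is that since $g^i$ is constant on each $\Gamma_j$, $\nabla g^i$ is purely normal on $\Gamma$, whence $F = (\hu\cdot n)\,\pn g^i = g\,\pn g^i$ on $\Gamma$; the boundary contribution is thus naturally weighted by $g$, and Cauchy-Schwarz bounds it by $\|\pn g^i\|_{L^\infty(\Gamma)} \bigl(\int_\Gamma |g|\bigr)^{1/2}\,\|\tu\|_{L^2(\Gamma,|g|)}$. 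The remaining two interior integrals are controlled pointwise in time by Cauchy-Schwarz as well, using that $\hu \in W^{1,p}(\Omega)$ for every $p < \infty$ by Proposition~\ref{p:yudo-ell-bound}, that $\homega \in L^\infty(\Omega)$, and that $g^i$ is smooth up to the boundary (being harmonic with piecewise-constant data on a $C^2$ domain). Squaring the pointwise estimate $|\tilde{\psi}_i'(t)| \lesssim \|\tu\|_{L^2(\Omega)} + \|\tu\|_{L^2(\Gamma,|g|)}$ and integrating over $(t_0,t_1)$ yields \eqref{desire}, and in particular identifies the above integrand as the $L^2(0,T)$ derivative of $\tilde{\psi}_i$.

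The most delicate point I anticipate is the bookkeeping behind the boundary-term cancellation at the intermediate step: this cancellation is the very reason the functions $g^i$ are engineered with piecewise-constant traces and prescribed normal fluxes, and must be verified component by component, distinguishing $\Gamma_0$ (where $g = 0$ and $g^i_0 = 0$) from the $\Gamma_j$, $j \in I^*$. Once this is in place, the rest reduces to a standard application of Green's identity, an integration by parts, and Cauchy-Schwarz.
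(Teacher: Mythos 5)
Your proposal is correct and follows essentially the same route as the paper: the paper likewise tests \eqref{e:weak-trans-diff} with $g^i$, identifies the right-hand side with $[\tilde{\psi}_i]_{t_0}^{t_1}$ via exactly the Green's-identity computation and circulation cancellation you describe, and then integrates by parts using $\tomega=\div\tu^{\perp}$ so that the boundary term carries the weight $g\,\pn g^i$ and is handled by Cauchy--Schwarz in $L^2(\Gamma,|g|)$. The only difference is cosmetic (you first write the explicit formula for $\tilde{\psi}_i$ and then differentiate, rather than reading the identity off the weak formulation directly).
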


\begin{proof}[Proof of Lemma \ref{lemmaprime}]
Let  $i$ in $I^*$.
We take $g^i$ as a (time-independent) test function in \eqref{e:weak-trans-diff}, and obtain that for every $0\leq t_0<t_1 \leq T$, 
\begin{equation} \label{test-g}
\int_{t_0}^{t_1}{\int_{\Omega}{\left( \tomega \hu \cdot \nabla g^i  +\homega  \tu  \cdot  \nabla g^i\right)}}
    = \int_{t_0}^{t_1}{\int_{\Gamma}{g^i \tomega g}}
    +
   \left[ \int_{\Omega}{\tomega g^i}\right]_{t_0}^{t_1} .
\end{equation}

Let us start with the right hand side of \eqref{test-g}. 
By \eqref{deco-do} and \eqref{def-g2}, the first term in the right hand side of \eqref{test-g} can be simplified as follows:
\begin{align*}
\int_{t_0}^{t_1}{\int_{\Gamma}{g^i \tomega g}}
    = \sum_{j\in I^*} g^i_j   \int_{t_0}^{t_1}{\int_{\Gamma_j}{\tomega g}}
  = - \sum_{j\in I^*} g^i_j   \left[   \tilde{\mathcal{C}}_j\right]_{t_0}^{t_1},
\end{align*}
thanks to \eqref{noel}.

On the other hand, using \eqref{delta-hat}  and two integrations by parts,
the second  term in the right hand side of \eqref{test-g} can be simplified by observing that:
\begin{align*}
\int_{\Omega}{\tomega(t,.)g^i} 
    &= \int_{\Omega}{\tilde{\psi}(t,.)\Delta g^i}
     + \int_{\Gamma}{\pn \tilde{\psi}(t,.) g^i} 
     - \int_{\Gamma}{\tilde{\psi}(t,.)\pn g^i} \\
    &= \sum_{j\in I^*}  g^i_j \tilde{\mathcal{C}}_j(t) + \tilde{\psi}_i(t) ,
\end{align*}
thanks to \eqref{aubord},  \eqref{circu-psi-diff} and \eqref{def-g}.
Thus the right hand side of \eqref{test-g} is equal to $\tilde{\psi}_i(t_1) - \tilde{\psi}_i(t_0)$.
  
Regarding  the second term in the left hand side  of \eqref{test-g}, by the Cauchy-Schwarz inequality, we obtain that
\begin{align} \label{enplus}
\left\vert  {\int_{\Omega}{\homega  \tu  \cdot  \nabla g^i}} \right\vert 
    &\leq  \Vert  \nabla g^i \Vert_{L^{\infty} (\Omega)}  \Vert\homega \Vert_{L^2 (\Omega)}   \Vert  \tu   \Vert_{ L^{2} (\Omega)} .
\end{align} 
Since  $\Vert\homega \Vert_{L^{\infty}( [0,T] ; L^2 (\Omega))} $ is bounded,
we infer that the left hand side of \eqref{enplus} is in $L^2 (0,T)$ 
and that there  exists $C>0$ such that  for  $0~\leq~t_0<t_1 \leq~T$
\begin{align}  \label{holder1} 
 \int_{t_0}^{t_1} \left\vert {\int_{\Omega}{\homega  \tu  \cdot  \nabla g^i}}\right\vert^2
    &\leq  C \int_{t_0}^{t_1} {\Vert  \tu  \Vert_{L^{2} (\Omega)}^2} . 
\end{align} 
On the other hand, regarding the first term in the left-hand-side  of \eqref{test-g},
we integrate by parts by observing that $\tomega = \div \tu  ^{\perp}$, so that 
\begin{equation}
\label{dim}
{\int_{\Omega}{\tomega \hu \cdot \nabla g^i }}
    = 
  -  {  \int_{\Omega}{ \tu  ^{\perp}\cdot \nabla\left(\hu \cdot \nabla g^i\right)}}
    +{\int_{\Gamma}{( \tu  \cdot \tau)\left(\hu \cdot \nabla g^i\right)}}.
\end{equation}

Regarding the first term in the right-hand-side of \eqref{dim}, by Leibniz' rule and the Cauchy-Schwarz inequality, we arrive at:
\begin{equation*}
\left\vert{  \int_{\Omega}{ \tu  ^{\perp} \cdot  \nabla\left(\hu \cdot \nabla g^i\right)}}\right\vert
  \leq\Vert  \tu   \Vert_{ L^2 (\Omega)} \left(\Vert \nabla g^i \Vert_{L^{\infty} (\Omega)}  \Vert\hu \Vert_{ H^{1} (\Omega)}  
 +\Vert \nabla^2 g^i \Vert_{L^{2}_x}  \Vert\hu \Vert_{L^{\infty} (\Omega)}
 \right) .
 \end{equation*}
Since the term in the parenthesis in the right-hand-side is in $L^{\infty} ([0,T] )$,
the right-hand-side is in the space $L^2(0,T)$ and there exists $C>0$ such that  for  $0\leq t_0<t_1 \leq T$,
\begin{equation} \label{holder2}
\int_{t_0}^{t_1} \left\vert{  \int_{\Omega}{ \tu  ^{\perp} \cdot  \nabla\left(\hu \cdot \nabla g^i\right)}}\right\vert^2 
   \leq  C\int_{t_0}^{t_1} {\Vert  \tu  \Vert_{L^{2} (\Omega)}^2} . 
\end{equation}

On the other hand, for the second term in the left hand side of \eqref{dim}, 
we proceed as follows. Let us set 
$$\Upsilon := \int_{\Gamma}{( \tu  \cdot \tau)\left(\hu \cdot \nabla g^i\right)},$$
which is a time-dependent function. 
Since the function $g^i$ is constant on each connected component of the boundary  $\Gamma$, 
$$\hu \cdot \nabla g^i = (\hu \cdot n)  \partial_n g^i = g    \partial_n g^i ,$$
by \eqref{imp-moy}. 
Then we recast $\Upsilon$ into 
$$\Upsilon ={\int_{\Gamma}}{\Big(( \tu  \cdot \tau) \vert g \vert^\frac12 \Big) \Big( ( \pn g^i ) \textrm{Sign}(g) \vert g \vert^\frac12 \Big)}
,$$
so that,  by the Cauchy-Schwarz inequality, 
\begin{align*}
\vert \Upsilon \vert \leqslant
    \Big( \int_{\Gamma}(\tu \cdot \tau)^2 \vert g \vert\Big)^\frac12 
    \Big( \int_{\Gamma}(\pn g^i )^2 \vert g \vert \Big)^{\frac{1}{2}} .
\end{align*}
 Therefore 
\begin{align}
\label{holder3} 
\int_{t_0}^{t_1}{|\Upsilon|^2}  
    \leq \int_{t_0}^{t_1}{\Vert\tu\Vert_{L^2(\Gamma,|g|))}^2}   . \int_{\Gamma} (\pn g^i)^2 \vert g \vert 
\leq C \int_{t_0}^{t_1}{\Vert\tu\Vert_{L^2(\Gamma,|g|))}^2}.
\end{align}
Gathering \eqref{test-g}, \eqref{holder1}, \eqref{holder2} and \eqref{holder3} for  $0 \leq t_0<t_1 \leq T$, 
we obtain that $\tilde{\psi}'_i $ is in $L^{2}(0,T)$ and the estimate \eqref{desire} hold true.
\end{proof}

We now prove  the following result on the function $G[\, \tomega]$,  recalling that the operator $G[\, \cdot]$ is defined in \eqref{def-Gd}. For the purpose of proving Proposition \ref{prop-f}, it would be enough to prove that $G[\, \tomega]$ is in the space  $H^1 ([0,T] ;H^1(\Omega))$; however it does not involve more work to obtain a slightly better regularity in time. 
\begin{lem} \label{lemB}
The function $G[\, \tomega]$ is in the space $\textrm{Lip}([0,T];H^1(\Omega))$.
\end{lem}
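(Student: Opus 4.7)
The plan is to exploit the linearity of $G$ combined with standard elliptic regularity for the Dirichlet Laplacian, which provides a continuous inverse $G : H^{-1}(\Omega) \to H^1_0(\Omega)$. Since $G[\tomega(t_1,\cdot)] - G[\tomega(t_0,\cdot)] = G[\tomega(t_1,\cdot) - \tomega(t_0,\cdot)]$, the Lipschitz regularity in $H^1(\Omega)$ of $G[\tomega]$ reduces to the Lipschitz regularity in $H^{-1}(\Omega)$ of $\tomega$ itself, that is, to a bound of the form $\|\tomega(t_1,\cdot) - \tomega(t_0,\cdot)\|_{H^{-1}(\Omega)} \leq C(t_1 - t_0)$ valid for all $0 \leq t_0 < t_1 \leq T$, with $C$ independent of $t_0, t_1$.

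To produce such a bound by duality, I would fix $\phi \in H^1_0(\Omega)$ of unit $H^1$ norm, regard it as a time-independent test function, and plug it into the weak formulation \eqref{e:weak-trans-diff}. Two terms vanish immediately: the integral against $\partial_t \phi$, because $\phi$ does not depend on time, and the boundary term $\int_{t_0}^{t_1} \int_\Gamma \phi \tomega g$, because the trace of $\phi$ on $\Gamma$ is zero. What remains is the identity
\begin{equation*}
\int_\Omega \tomega(t_1) \phi - \int_\Omega \tomega(t_0) \phi = \int_{t_0}^{t_1} \int_\Omega \bigl( \tomega \hu + \homega \tu \bigr) \cdot \nabla \phi,
\end{equation*}
which expresses the duality pairing of the increment of $\tomega$ with $\phi$ as a space-time integral of bilinear quantities.

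Bounding the right-hand side by $C(t_1 - t_0) \|\phi\|_{H^1(\Omega)}$ is then a matter of applying the Cauchy-Schwarz inequality together with the uniform $L^\infty_{t,x}$ bounds on the vorticities $\tomega, \homega$ and the uniform $L^\infty_t L^2_x$ bounds on the velocities $\tu, \hu$, both of which are a consequence of Proposition \ref{p:yudo-ell-bound} applied to each of the two solutions. Taking the supremum over admissible $\phi$ gives the desired Lipschitz estimate on $\tomega$ in $H^{-1}$, and composition with $G$ concludes.

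I do not anticipate a serious technical obstacle: the choice $\phi \in H^1_0(\Omega)$ of test function is designed precisely to annihilate the two delicate features of the formulation, namely the appearance of the (only weakly defined) trace of $\tomega$ on the permeable boundary and the time differentiability of $\tomega$. The only mild point to keep in mind is that this approach yields a slightly stronger conclusion than what is strictly required by Proposition \ref{prop-f}, which only needs $H^1$ regularity in time.
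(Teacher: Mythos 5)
Your proposal is correct and follows essentially the same route as the paper: both insert a time-independent $H^1_0(\Omega)$ test function into the weak formulation \eqref{e:weak-trans-diff} so that the $\partial_t\phi$ term and the boundary term drop out, and both bound the remaining space-time integral using the uniform $L^\infty_{t,x}$ bound on the vorticities and the $L^\infty_t L^2_x$ bound on the velocities. The only cosmetic difference is that the paper plugs in the specific choice $a = G[\tomega](t_1,\cdot)-G[\tomega](t_0,\cdot)$ and concludes via Poincar\'e, whereas you take a supremum over unit-norm test functions to get a Lipschitz bound on $\tomega$ in $H^{-1}(\Omega)$ and then compose with the bounded inverse $G:H^{-1}(\Omega)\to H^1_0(\Omega)$ — these are equivalent.
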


\begin{proof}[Proof of Lemma \ref{lemB}]
Let $a$ in $H^1_0(\Omega)$.
By using  $a$ as a (constant-in-time) test function 
in \eqref{e:weak-trans-diff}, we get that  for  $0 \leq t_0<t_1 \leq T$, 
\begin{align*}
\int_{t_0}^{t_1}{\int_{\Omega}{\left(\tomega \hu  +\homega  \tu  \right)\cdot  \nabla a}}
    &= \int_{\Omega}{(\tomega(t_1,.)-\tomega(t_0,.))a} \\
    &= \int_{\Omega}{\left(\Delta G[\, \tomega](t_1,.)-\Delta G[\, \tomega](t_0,.)\right) a} \\
    &=- \int_{\Omega}{\nabla\left( G[\, \tomega](t_1,.)- G[\, \tomega](t_0,.)\right)\cdot \nabla a},
\end{align*}
by using the definition of the operator $G[\, \cdot]$ in \eqref{def-Gd} and an integration by parts. 
In particular, in the case where 
 $$a=G[\, \tomega](t_1,.)-G[\, \tomega](t_0,.),$$
 we infer by the Cauchy-Schwarz inequality as well as Poincar\'e inequality, see for example \cite[Theorem 13.6.9]{TW},  that there exists a constant $C>0$ such that for  $0 \leq t_0<t_1 \leq T$,  
\begin{equation}
\Vert G[\, \tomega](t_1,.)-G[\, \tomega](t_0,.) \Vert_{H^1 (\Omega)}\leq C  \int_{t_0}^{t_1}{\Vert \tomega \hu  +\homega  \tu   \Vert_{L^2 (\Omega)}},
\end{equation}
which gives
\begin{equation}
\label{bdbd}
\Vert G[\, \tomega] \Vert_{\textrm{Lip}([0,T];H^1(\Omega))}\leq C \Vert \tomega \hu  +\homega  \tu   \Vert_{L^{\infty}([0,T];L^2(\Omega))}.
\end{equation}
Thus by mastering the right hand side of \eqref{bdbd} thanks to Proposition  \ref{p:yudo-ell-bound} and \eqref{xprime}, 
we conclude the proof of Lemma \ref{lemB}. 
\end{proof}

Using the decomposition \eqref{e:psi-decomp}, we deduce from  Lemma \ref{lemmaprime}, from
Lemma \ref{lemB} and from the $C^2  $ regularity on $\overline \Omega  $ of the 
the functions $f^i$
defined in \eqref{def-f},  
that  the stream function $\tilde{\psi}$ is in the space $H^1((0,T);H^1(\Omega))$.
The proof of 
Proposition \ref{prop-f}  is therefore completed.
\end{proof}

%%%%%%%%%%%%%%%%%%%%%%%%%%%%%%%%%%%%%%%%%%%%%%%%%%%%%%%%%%%%
\section{Energy estimate of the difference}
\label{p:ener-eq}

This section is devoted to the proof of the following energy inequality.
\begin{prop}\label{t:energ-eq}
For  $0 \leq t_0<t_1 \leq T$, 
\begin{equation} \label{eeq}
\frac{1}{2}\left[\Vert  \tu   \Vert_{L^2 (\Omega)}^2 \right]_{t_0}^{t_1}
    + \frac{1}{2}\int_{t_0}^{t_1}{\int_{\Gamma}{| \tu  |^2 g}}
    + \int_{t_0}^{t_1}{\int_{\Omega}{ \tu  \cdot \left(\left( \tu  \cdot \nabla\right)\hu \right)}} = 0,
\end{equation}
where we recall the notation \eqref{jump}.
\end{prop}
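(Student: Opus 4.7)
The strategy is to take $\phi = \tilde{\psi}$ as test function in the weak vorticity formulation \eqref{e:weak-trans-diff}, which is legitimate since Proposition~\ref{prop-f} gives $\tilde{\psi} \in C^0([0,T];W^{2,p}(\Omega)) \cap H^1([0,T];H^1(\Omega))$. Because $\tu = \nabla^{\perp} \tilde{\psi}$, the pointwise identity $\tu \cdot \nabla \tilde{\psi} = 0$ eliminates the third term on the left-hand side of \eqref{e:weak-trans-diff}, so that one only needs to rewrite the three surviving contributions into the three contributions in \eqref{eeq}.

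For the terms pairing $\tomega$ with $\tilde{\psi}$ or $\partial_t \tilde{\psi}$, I would use $\tomega = \Delta \tilde{\psi}$ from \eqref{delta-hat} and integrate by parts in space. Exploiting that $\tilde{\psi}$ equals the time-dependent constant $\tilde{\psi}_i(t)$ on each $\Gamma_i$ (with $\tilde{\psi}_0 = 0$) and that $\int_{\Gamma_i} \partial_n \tilde{\psi} = \tilde{\mathcal{C}}_i$ by \eqref{circu-psi-diff}, these integrations by parts produce the square norm $\Vert \tu \Vert_{L^2(\Omega)}^2$ (at times $t_0$ and $t_1$ for the first, differentiated in time with a factor $-1/2$ for the second), plus bilinear circulation terms $\tilde{\psi}_i \tilde{\mathcal{C}}_i$ and $\tilde{\psi}_i' \tilde{\mathcal{C}}_i$. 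The boundary integral $\int_\Gamma \tilde{\psi}\, \tomega\, g$ splits along the $\Gamma_j$, where $\tilde{\psi}$ is spatially constant, and Kelvin's law in the form \eqref{noel} reduces it to $-\sum_{i\in I^*} \tilde{\psi}_i\, \tilde{\mathcal{C}}_i'$. Once all these pieces are collected in \eqref{e:weak-trans-diff}, the circulation bookkeeping cancels cleanly via the Leibniz rule.

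The hard part is to convert the remaining cross term $\iint \tomega\, (\hu \cdot \nabla \tilde{\psi})$ into the two other contributions of \eqref{eeq}, namely $\tfrac{1}{2}\iint_\Gamma |\tu|^2 g$ and $\iint \tu \cdot (\tu \cdot \nabla)\hu$. For this I would invoke the two-dimensional Lamb-type vector identity
$$\nabla(\hu \cdot \tu) = (\hu \cdot \nabla)\tu + (\tu \cdot \nabla)\hu + \homega\, \tu^\perp + \tomega\, \hu^\perp,$$
take the scalar product with $\tu$, and use the algebraic relations $\tu \cdot \tu^\perp = 0$, $\tu \cdot \hu^\perp = -\hu \cdot \tu^\perp$, together with $\hu \cdot \nabla \tilde{\psi} = \hu \cdot \tu^\perp$, to obtain the pointwise equality $\tomega\, (\hu \cdot \nabla \tilde{\psi}) = \tu \cdot (\hu \cdot \nabla)\tu + \tu \cdot (\tu \cdot \nabla)\hu - \tu \cdot \nabla(\hu \cdot \tu)$. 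After integration over $\Omega$, the first summand equals $\hu \cdot \nabla(|\tu|^2/2)$ and produces $\tfrac{1}{2}\int_\Gamma g|\tu|^2$ by $\div \hu = 0$ and $\hu \cdot n = g$ from \eqref{imp-moy}; the third vanishes by $\div \tu = 0$ and $\tu \cdot n = 0$ from \eqref{imp-diff}; the middle one is precisely the trilinear interior term of \eqref{eeq}.

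Assembling all contributions in \eqref{e:weak-trans-diff} then produces exactly \eqref{eeq}. The only regularity point requiring some care is the boundary contribution $\int_\Gamma \partial_n \tilde{\psi}\, \partial_t \tilde{\psi}$ arising in the integration by parts of $\int_\Omega \tomega\, \partial_t \tilde{\psi}$: it collapses to $\sum_{i\in I^*} \tilde{\psi}_i'(t)\, \tilde{\mathcal{C}}_i(t)$ thanks to the spatial constancy of $\tilde{\psi}|_{\Gamma_i}$, and is time-integrable thanks to the $L^2(0,T)$ bound \eqref{desire} from Proposition~\ref{prop-f}.
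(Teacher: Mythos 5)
Your proposal is correct and follows essentially the same route as the paper: take $\tilde{\psi}$ as test function (justified by Proposition \ref{prop-f}), handle the $\tomega\,\partial_t\tilde{\psi}$ and bracket terms by integration by parts with the circulation bookkeeping cancelling via the Leibniz rule, and convert the cross term $\iint\tomega\,(\hu\cdot\nabla\tilde{\psi})$ via the Lamb-type vector identity. The only cosmetic difference is that you re-derive the pointwise Lamb identity inline instead of invoking Lemma \ref{l:lemast-pre}, but that lemma's proof is exactly the computation you perform, so the arguments coincide.
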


\begin{proof}
According to Proposition \ref{prop-f},
 we can apply \eqref{e:weak-trans-diff} 
 to  the case where the test function is the stream function $\tilde{\psi}$
 defined in \eqref{e:psi-decomp}. This entails that for $0 \leq t_0<t_1 \leq T$, 
\begin{equation} \label{nrj-base}
\int_{t_0}^{t_1} \int_{\Omega} \tomega\partial_t\tilde{\psi} +
\int_{t_0}^{t_1}{\int_{\Omega} \tomega \hu  \cdot  \nabla \tilde{\psi}}
+ \int_{t_0}^{t_1}{\int_{\Omega} \homega  \tu  \cdot  \nabla \tilde{\psi}}
    = \int_{t_0}^{t_1}{\int_{\Gamma}{\tilde{\psi} \tomega g}}
     +     \left[ \int_{\Omega}{\tomega\tilde{\psi}} \right]_{t_0}^{t_1}    .
\end{equation}
Let us successively transform  each term of \eqref{nrj-base}.

First, by \eqref{delta-hat} and an integration by parts, the first term in the left-hand side of \eqref{nrj-base} can be recast as 
\begin{align} \nonumber
\int_{t_0}^{t_1}{\int_{\Omega}{\tomega\partial_t\tilde{\psi}}}
    &= - \int_{t_0}^{t_1}{\int_{\Omega}{\nabla\tilde{\psi} \cdot \partial_t\nabla\tilde{\psi}}}
      + \int_{t_0}^{t_1}{\int_{\Gamma}{\pn\tilde{\psi}\, \partial_t\tilde{\psi}}} \\
    &= -\frac{1}{2}\left[\Vert  \tu   \Vert_{L^2 (\Omega)}^2 \right]_{t_0}^{t_1}
      + {\sum_{i\in I^*}{\tilde{\psi}'_i \, \int_{t_0}^{t_1} \tilde{\mathcal{C}}_i}} , \label{ICI1}
\end{align}
thanks to the boundary's decomposition \eqref{deco-do}, \eqref{e:u-nab-perp-psi},  \eqref{aubord}  and  \eqref{circu-psi-diff}.

Next, we recast  the second term in the left hand side of \eqref{nrj-base}  
 by using the following lemma. 

\begin{lem}\label{l:lemast-pre}
Let $u$  a divergence-free vector field in $C^0 (\overline \Omega; \R^2)$  with bounded  vorticity $\omega:= \curl u$.
Let $w$ a divergence-free  vector field in $H^1(\Omega)$.
Then 
\begin{align}
\label{VRAIAUSSI-pre}
\int_{\Omega}{\omega u^{\perp}\cdot w} 
 = - \frac12 \int_{\Gamma}{|u|^2(w\cdot n)}  - \int_{\Omega}{u\cdot \left(\left(u\cdot \nabla\right)w\right)} 
     + \int_{\Gamma}{(u\cdot w)(u\cdot n)} .
     % +\int_{\Omega}{|u|^2\div  \, (w)}      .
\end{align}
\end{lem}
We  use later on a technical lemma, see Lemma \ref{l:lemast},
 which slightly extends  
Lemma \ref{l:lemast-pre}. Indeed Lemma \ref{l:lemast-pre} is the particular case of 
Lemma \ref{l:lemast} below where $u=v$ and where $w$ is divergence-free, so that we do not provide here any proof of Lemma \ref{l:lemast-pre}.

Thus, with a preliminary recasting due to \eqref{e:u-nab-perp-psi}, 
\begin{equation}
\int_{t_0}^{t_1}{\int_{\Omega}{\tomega \hu  \cdot  \nabla \tilde{\psi}}}
    = -\int_{t_0}^{t_1}{\int_{\Omega}{\tomega  \tu  ^{\perp} \cdot  \hu }} %\\
    = \int_{t_0}^{t_1}{\left(\frac{1}{2}\int_{\Gamma}{| \tu  |^2 g} + \int_{\Omega}{ \tu  \cdot \left(\left( \tu  \cdot \nabla\right)\hu \right)}\right)} ,
    \label{ICI2}
\end{equation}
by applying Lemma \ref{l:lemast} with $(\tu   , \hu )$ instead of $(u,w)$, with  the observation that the last boundary term of \eqref{VRAIAUSSI-pre} vanishes thanks to 
\eqref{imp-diff}. Observe that we also use \eqref{imp-moy} to deal with the other  boundary term. 

Regarding the third term in the left hand side of \eqref{nrj-base}, it follows from 
\eqref{e:u-nab-perp-psi} that 
\begin{equation}
    \label{ICI3}
    \int_{t_0}^{t_1}{\int_{\Omega} \homega  \tu  \cdot  \nabla \tilde{\psi}} = 0.
\end{equation}

Using the decomposition \eqref{deco-do}, \eqref{aubord} and \eqref{noel}, we get that the first term in the right hand side of \eqref{nrj-base} satisfies
\begin{equation} \label{ICI4}
\int_{t_0}^{t_1}{\int_{\Gamma}{\tilde{\psi} \tomega g}}
    = -\int_{t_0}^{t_1}{\sum_{i\in I^*}{\tilde{\psi}_i \tilde{\mathcal{C}}_i'}}.
\end{equation}

Finally using again \eqref{delta-hat} and an integration by parts, 
we observe that the term involved in the bracket of the last term in the right hand side of \eqref{nrj-base} 
satisfies, for  every time, 
\begin{equation} 
\int_{\Omega}{\tomega\tilde{\psi}} 
    = -\int_{\Omega}{\nabla \tilde{\psi} \cdot \nabla\tilde{\psi}} + \int_{\Gamma}{\tilde{\psi}\, \pn\tilde{\psi}} \\
    = -\Vert  \tu   \Vert_{L^2 (\Omega)}^2 + \sum_{i\in I^*}{\tilde{\psi}_i \, \tilde{\mathcal{C}}_i},\label{ICI5}
\end{equation}
by using \eqref{e:u-nab-perp-psi}, the decomposition \eqref{deco-do} and \eqref{aubord}.

Combining \eqref{nrj-base}, \eqref{ICI1}, \eqref{ICI2}, \eqref{ICI3}, \eqref{ICI4} and
 \eqref{ICI5}, 
we arrive at \eqref{eeq} and this concludes the proof of Proposition \ref{t:energ-eq}.
\end{proof}

From Proposition \ref{p:ener-eq} we deduce the following estimate.
\begin{cor}\label{t:energ-eq-cor}
There exists a constant $C$ 
such that for any $0 \leq t_0<t_1 \leq T$,  and for every $p\in [2,+\infty)$, 
\begin{equation} \label{i:ener-ineq}
\frac{1}{2}\left[\Vert  \tu   \Vert_{L^2 (\Omega)}^2 \right]_{t_0}^{t_1}
+ \frac{1}{2}\int_{t_0}^{t_1}{\int_{\Go}{| \tu  |^2g}}
    \leq \frac{1}{2}\int_{t_0}^{t_1}{\int_{\Gi}{| \tu  |^2 (-g)}}
   + Cp \int_{t_0}^{t_1}{\Vert  \tu  \Vert_{L^2(\Omega)}^{2\frac{p-1}{p}}}.
\end{equation}
\end{cor}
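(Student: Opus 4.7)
The plan is to start from the energy equality \eqref{eeq} of Proposition \ref{t:energ-eq}, split its boundary integral according to the sign structure of $g$, and then control the interior trilinear term by $Cp\,\Vert\tu\Vert_{L^2(\Omega)}^{2(p-1)/p}$. Since $g$ vanishes on $\Gamma_0$, is nonpositive on $\Gi$ and nonnegative on $\Go$ by \eqref{signeg}, the boundary integral splits as
\[
\int_\Gamma |\tu|^2\,g \; = \; \int_{\Go} |\tu|^2\,g \; - \; \int_{\Gi} |\tu|^2\,(-g),
\]
and the $\Gi$ contribution is moved to the right-hand side, producing the first term on the right of \eqref{i:ener-ineq}.

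To control the interior convective term $\int_\Omega \tu\cdot((\tu\cdot\nabla)\hu)$, I would apply H\"older's inequality with the conjugate pair $\bigl(\tfrac{2p}{p-1},p\bigr)$ and then interpolate between $L^2$ and $L^\infty$:
\[
\left|\int_\Omega \tu\cdot((\tu\cdot\nabla)\hu)\right|
\;\le\; \Vert\tu\Vert_{L^{2p/(p-1)}(\Omega)}^{2}\,\Vert\nabla\hu\Vert_{L^{p}(\Omega)}
\;\le\; \Vert\tu\Vert_{L^{2}(\Omega)}^{2(p-1)/p}\,\Vert\tu\Vert_{L^{\infty}(\Omega)}^{2/p}\,\Vert\nabla\hu\Vert_{L^{p}(\Omega)}.
\]
Since $\omega^1,\omega^2\in L^\infty$, Proposition \ref{p:yudo-ell-bound} combined with the Sobolev embedding $W^{1,p}(\Omega)\hookrightarrow L^\infty(\Omega)$ for some fixed $p>2$ shows that $u^1,u^2$, and hence $\tu$, are uniformly bounded on $[0,T]\times\Omega$, so $\Vert\tu\Vert_{L^\infty(\Omega)}^{2/p}$ stays bounded by a constant independent of $p\ge 2$. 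Applying Proposition \ref{p:yudo-ell-bound} to $\hu$, and using that $\Vert\homega\Vert_{L^p(\Omega)}$, $\Vert g\Vert_{C^{1,\alpha}(\Gamma)}$ and the circulations associated to $\hu$ are uniformly bounded on $[0,T]$, one gets $\Vert\nabla\hu\Vert_{L^p(\Omega)}\le Cp$. Integrating the resulting pointwise-in-time estimate on $[t_0,t_1]$ and gathering with the boundary decomposition above yields \eqref{i:ener-ineq}.

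The delicate point is ensuring that the constant remains independent of $p$: this is guaranteed by the uniform-in-$p$ bound on $\Vert\tu\Vert_{L^\infty}^{2/p}$ for $p\ge 2$ and by the precise linear growth $\Vert\nabla\hu\Vert_{L^p}=O(p)$ provided by Proposition \ref{p:yudo-ell-bound}. This linear $p$-growth is exactly the feature that will later be exploited in Section \ref{sec-game} via an Osgood-type argument, in the spirit of Yudovich's original uniqueness proof in the impermeable case (cf.\ Remark \ref{rk-c-p}).
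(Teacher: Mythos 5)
Your proposal is correct and follows essentially the same route as the paper: start from the energy equality of Proposition \ref{t:energ-eq}, split the boundary term using \eqref{signeg}, and bound the trilinear term via H\"older with exponents $\bigl(\tfrac{2p}{p-1},\tfrac{2p}{p-1},p\bigr)$, the $L^2$--$L^\infty$ interpolation, the uniform $L^\infty$ bound on $\tu$, and the $O(p)$ growth of $\Vert\hu\Vert_{W^{1,p}(\Omega)}$ from Proposition \ref{p:yudo-ell-bound}. The only cosmetic difference is that the paper obtains the $L^\infty$ bound on $\tu$ through Proposition \ref{prop-f} and \eqref{e:u-nab-perp-psi} rather than directly from $u^1,u^2$, which is immaterial.
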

Observe that due to the sign conditions in \eqref{signeg}, the second term in the left  hand side and the first term in the right hand side are both non negative; 
this is indeed the reason why we display them in this way.
Should the latter be discarded, the inequality \eqref{i:ener-ineq} would allow to conclude  the proof of Theorem \ref{main-thm} by proceeding as in the impermeable case, that is by optimizing  in $p$ and by using Osgood's lemma, see Remark  \ref{rk-c-p} and \cite{yudo1995,yudo2005}.
As already hinted above, see Section  \ref{sec-st}, the presence of this "bad" boundary term, which is associated with the sources, but not bounded by the left hand side nor by the data, is an obstacle to this program, which is overcome in the next sections, by making use of a well-chosen auxiliary test function.
\begin{proof}[Proof of Corollary \ref{t:energ-eq-cor}]
Taking into account the splitting of the second term of 
\eqref{t:energ-eq} according to the decomposition of the boundary $\Gamma $ into 
$\Gamma = \Gi \cup \Go \cup \Gamma_0$ and the fact that 
$g = 0$  on $ \Gamma_0$ (see  \eqref{deco-do}, 
 \eqref{tison} and \eqref{signeg}), 
 the proof of Corollary \ref{t:energ-eq-cor} amounts to prove that 
 there exists a constant $C$ 
such that at any non-negative time,  and for every $p\in [2,+\infty)$, 
 the third term of \eqref{eeq} is bounded as follows: 
\begin{equation}  \label{oeuf}
\left|\int_{\Omega}{ \tu  \cdot \left(\left( \tu  \cdot \nabla\right)\hu \right)}\right|
    \leq  C  p \Vert  \tu  \Vert_{L^2(\Omega)}^{2\frac{p-1}{p}} . 
\end{equation}
To that aim, 
let us first observe that, by H\"older's inequality, we have, for all $p\in (1,+\infty)$, that 
\begin{equation} \label{paques}
\left|\int_{\Omega}{ \tu  \cdot \left(\left( \tu  \cdot \nabla\right)\hu \right)}\right|
    \leq \Vert  \tu  \Vert_{L^{\frac{2p}{p-1}}(\Omega)}^2 \Vert \hu \Vert_{W^{1,p}(\Omega)}, 
\end{equation}
To bound the first factor in the right hand side, let us recall  that for any $p$ in $(1,+\infty)$, for any vector field $w$
 in $L^2 ( \Omega; \R^2) \cap L^\infty ( \Omega; \R^2)$,
 we have the following interpolation inequality: 
\begin{equation} \label{interpo}
\Vert w\Vert_{L^{\frac{2p}{p-1}  }(\Omega)}
    \leq \Vert w\Vert_{L^{\infty} (\Omega) }^{\frac{1}{p}}
     \Vert w\Vert_{L^2  (\Omega)}^{\frac{p-1}{p}} . 
\end{equation}
By Proposition \ref{prop-f}, Sobolev embedding theorem and \eqref{e:u-nab-perp-psi}, we obtain that 
 $\tu$ is in the space $C^0 (\R_+ ; L^{\infty} (\Omega))$, so that the first factor of  the right hand side of \eqref{paques} can be bounded by 
\begin{equation*} 
 C   \Vert  \tu \Vert_{L^2  (\Omega)}^{2\frac{p-1}{p}} . 
\end{equation*}
Moreover the second factor of  the right hand side of \eqref{paques} can be bounded thanks to  Proposition \ref{p:yudo-ell-bound} by $Cp$ for a positive constant  $C$ independent of time and of $p$ in $[2,+\infty)$. 
This allows to deduce  \eqref{oeuf} from \eqref{paques}, and therefore to conclude the proof of Corollary~\ref{t:energ-eq-cor}.
\end{proof}
%

%%%%%%%%%%%%%%%%%%%%%%%%%%%%%%%%%%%%%%%%%%%%%%%%%%%%%%%%%%%%
\section{An auxiliary test function}
\label{s:2}

This section is devoted to the existence and to the regularity of an auxiliary test function which is useful in the sequel to establish a second energy estimate.
This test function is defined through the following Zaremba-type problem:
\begin{subequations}
\label{caremba}
\begin{align}
\label{lap}
\Delta \tilde{\varphi}  &= 0             \quad                                  \text{ on } \Omega, \\ \label{lap2}
\tilde{\varphi}         &= 0                \quad                                 \text{ on } \Go\cup\Gamma_0 ,  \\ \label{lap3}
\pn \tilde{\varphi}     &= -\pn \tilde{\psi}   \quad                                             \text{ on } \Gi.
\end{align}
\end{subequations}
It is instructive to look first at a variational formulation of the system \eqref{caremba}, associated with the space 
\begin{equation}\label{e:def-Ho}
\Ho:=\{a\in H^1(\Omega); \quad a_{|\Go\cup\Gamma_0}=0\}.    
\end{equation}
Thanks to the Poincar\'e inequality, the space $\Ho$ is a Hilbert space endowed with the homogeneous Sobolev norm associated with the space $\dot{H}^1(\Omega)$. 
Then, we consider the following variational formulation of the system \eqref{caremba}:
\begin{equation}\label{e:zarem-weaksense}
 \tilde{\varphi} \text{ is in } \Ho \text{ and }
\forall a\in \Ho, \quad
\int_{\Omega}{\nabla\tilde{\varphi}\cdot\nabla a}
    = - \int_{\Omega}{\nabla\tilde{\psi}\cdot\nabla a}
     - \int_{\Omega}{\tomega a}.
\end{equation}
Indeed, if $\tilde{\varphi}$ is a smooth solution of the system \eqref{caremba}, then for all $a$ in  $\Ho$,
\begin{equation}
\label{car1}
0 = \int_{\Omega} (\Delta \tilde{\varphi} ) a 
= -\int_{\Omega} \nabla  \tilde{\varphi} \cdot \nabla a
+ \int_{\Gamma} (\pn  \tilde{\varphi}  )a,
\end{equation}
by integrating by parts. Moreover by \eqref{lap3} and the fact that $a\in \Ho$, we deduce that 
\begin{equation}
\label{car2}
 \int_{\Gamma} (\pn  \tilde{\varphi}  )a 
 = - \int_{\Gamma} (\pn  \tilde{\psi}  )a
  = - \int_{\Gamma} \nabla  \tilde{\psi} \cdot \nabla a
  -  \int_{\Omega} (\Delta \tilde{\psi} ) a  ,
\end{equation}
by integrating by parts. Combining \eqref{car1}, \eqref{car2} and recalling 
\eqref{delta-hat}, we arrive at \eqref{e:zarem-weaksense}.

Since the right hand side of the identity in \eqref{e:zarem-weaksense} is a linear form with respect to $a$ in $\Ho$ with an operator norm bounded by
\begin{equation}
\label{lactou}
C \big( \Vert\tu \Vert_{C^0([0,T];L^2(\Omega))}
+ \Vert\tomega \Vert_{C^0([0,T];L^2(\Omega))} \big),
\end{equation} 
by the Lax-Milgram theorem, there exists a unique function $\tilde{\varphi}$ in $C^0([0,T];\Ho)$ which satisfies \eqref{e:zarem-weaksense} for all non-negative times. 

Moreover the following result establishes that the function $\tilde{\varphi}$ is more regular. 
\begin{prop}\label{p:aux-test-reg}
The function $\tilde{\varphi}$ is  in $C^0([0,T];W^{2,p}(\Omega))$ for all $p$ in $(1,+\infty)$. 
Moreover, for all $T>0$, the function $\tilde{\varphi}$ is also in the space
$H^1([0,T];\Ho)$.
%$H^1([0,T];H^1(\Omega))$. 
\end{prop}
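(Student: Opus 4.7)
The strategy is to establish the two regularity statements separately, exploiting the key structural fact that $\Gi$ and $\Go \cup \Gamma_0$ are disjoint unions of connected components of $\Gamma$. This means the Dirichlet and Neumann parts of the mixed boundary problem \eqref{caremba} do not meet anywhere, so the corner singularities typical of genuine Zaremba problems do not arise, and classical elliptic regularity applies.

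\textbf{Spatial regularity.} For any $p \in (1, +\infty)$, standard $L^p$-theory for second-order elliptic problems on $C^2$ domains, with a pure-type boundary condition on each connected component of $\Gamma$, provides a continuous linear solution operator sending Neumann data in $W^{1-1/p, p}(\Gi)$ (together with the zero Dirichlet data on $\Go \cup \Gamma_0$) to solutions in $W^{2,p}(\Omega)$. Proposition \ref{prop-f} gives $\tilde{\psi} \in C^0([0,T]; W^{2,p}(\Omega))$, hence by the trace theorem $\pn \tilde{\psi}|_{\Gi} \in C^0([0,T]; W^{1-1/p, p}(\Gi))$, and composition with the solution operator yields $\tilde{\varphi} \in C^0([0,T]; W^{2,p}(\Omega))$.

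\textbf{Temporal regularity.} Subtracting the variational identity \eqref{e:zarem-weaksense} taken at $t_1$ and at $t_0$ gives, for every $a \in \Ho$,
\begin{equation*}
\int_\Omega \nabla(\tilde{\varphi}(t_1) - \tilde{\varphi}(t_0)) \cdot \nabla a = -\int_\Omega \nabla(\tilde{\psi}(t_1) - \tilde{\psi}(t_0)) \cdot \nabla a - \int_\Omega (\tomega(t_1) - \tomega(t_0))\, a.
\end{equation*}
The last term is the delicate one, since $\tomega$ is only weakly-$\ast$ continuous in time into $L^\infty(\Omega)$. I would treat it by taking $\phi = a$ (time-independent) as test function in the weak vorticity formulation \eqref{e:weak-trans-diff} and exploiting that $a$ vanishes on $\Go \cup \Gamma_0$, to obtain
\begin{equation*}
\int_\Omega (\tomega(t_1) - \tomega(t_0))\, a = \int_{t_0}^{t_1} \int_\Omega (\tomega\,\hu + \homega\,\tu)\cdot \nabla a\, dt - \int_{t_0}^{t_1} \int_{\Gi} a\,\tomein\, g\, dt.
\end{equation*}
By H\"older's inequality, the trace theorem and the $L^\infty_t L^\infty_x$ bound on $\hu$ coming from Proposition \ref{p:yudo-ell-bound} together with Sobolev embedding, this right-hand side is bounded by $C(t_1 - t_0)\|a\|_{\Ho}$ with $C$ independent of $t_0, t_1$.

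Plugging this bound into the difference identity and choosing $a = \tilde{\varphi}(t_1) - \tilde{\varphi}(t_0)$ yields, after Cauchy--Schwarz,
\begin{equation*}
\|\tilde{\varphi}(t_1) - \tilde{\varphi}(t_0)\|_{\Ho} \leq \|\nabla(\tilde{\psi}(t_1) - \tilde{\psi}(t_0))\|_{L^2(\Omega)} + C(t_1 - t_0).
\end{equation*}
Writing $\delta_h$ for the forward difference quotient of step $h > 0$, dividing by $h$, squaring and integrating in $t$ over $[0, T-h]$ produces a uniform-in-$h$ bound of $\delta_h \tilde{\varphi}$ in $L^2([0, T-h]; \Ho)$, since $\tilde{\psi} \in H^1([0,T]; H^1(\Omega))$ by Proposition \ref{prop-f}. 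Passing $h \to 0^+$ identifies $\partial_t \tilde{\varphi}$ as an element of $L^2([0,T]; \Ho)$ and concludes the proof. I expect the main technical subtlety to be the weak-formulation step used to tame $\partial_t \tomega$; the spatial elliptic regularity step is routine thanks to the disjointness of the two portions of the mixed boundary.
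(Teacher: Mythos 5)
Your proposal is correct; it coincides with the paper on the spatial regularity and takes a genuinely different route for the time regularity. For the $W^{2,p}$ part you argue exactly as the paper does (elliptic regularity for the mixed problem with Neumann data $-\pn\tilde{\psi}|_{\Gi}$ supplied by Proposition \ref{prop-f}); the paper simply invokes Grisvard's theorem on Zaremba problems, while you add the pertinent observation that $\Gi$ and $\Go\cup\Gamma_0$ are unions of distinct connected components of $\Gamma$, so no Dirichlet--Neumann interface singularity can occur. For the $H^1$-in-time part, the paper proceeds by duality: for each smooth $\omega_\phi$ compactly supported in $(0,T)$ it introduces the solution $\phi$ of the adjoint mixed problem \eqref{tes} and bounds $\int_0^T\int_\Omega\tilde{\varphi}\,\partial_t\omega_\phi$ through a chain of integrations by parts, the weak vorticity formulation, and the $L^2$ bounds on the $\tilde{\psi}_i'$, concluding from the resulting functional estimate that $\partial_t\tilde{\varphi}\in L^2((0,T);H^1(\Omega))$. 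You instead subtract the variational identity \eqref{e:zarem-weaksense} at two times, control $\int_\Omega(\tomega(t_1)-\tomega(t_0))\,a$ by $C(t_1-t_0)\Vert a\Vert_{\Ho}$ via the weak formulation with a time-independent test function (this is precisely the mechanism of the paper's Lemma \ref{lemB}, and your bound on the boundary term is legitimate since $|\tomein\, g|$ is essentially bounded on $\Gi$ and the trace plus Poincar\'e inequalities absorb $\Vert a\Vert_{H^1(\Omega)}$ into $\Vert a\Vert_{\Ho}$), then take $a=\tilde{\varphi}(t_1)-\tilde{\varphi}(t_0)$ and finish with difference quotients. Your argument is shorter and more transparent: it transfers the $H^1([0,T];H^1(\Omega))$ regularity of $\tilde{\psi}$, already paid for in Proposition \ref{prop-f}, directly to $\tilde{\varphi}$, whereas the paper's duality computation partially re-derives that control through the boundary terms $\tilde{\psi}_i'\int_{\Gamma_i}\pn\phi$. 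The only step you should make fully explicit is the passage from the uniform $L^2([0,T-h];\Ho)$ bound on the difference quotients to $\partial_t\tilde{\varphi}\in L^2([0,T];\Ho)$, which is standard because $\Ho$ is a Hilbert space, hence reflexive.
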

\begin{proof}
First, by Proposition \ref{p:yudo-ell-bound}, we know that $\tilde{\psi}$ is in $C^0([0,T];W^{2,p}(\Omega))$ for any finite~$p$. 
Then, it follows from the classical regularity results on the Zaremba-type problems,
see for example \cite[Theorem 2.4.2.6]{Grisvard},
that the function $\tilde{\varphi}$ is also in $C^0([0,T];W^{2,p}(\Omega))$ for any $p$ in $(1,+\infty)$.

Thus, it only remains to prove that $\partial_t \tilde{\varphi}$ is 
 in the space $L^2((0,T); \Ho)$. 
To this aim, let us consider $\omega_\phi$ a smooth function on $[0,T]\times\Omega$, compactly supported in $(0,T)$.
As argued above, there exists a unique smooth function $\phi$ which satisfies for all $t$ in $[0,T]$,
\begin{subequations}
\label{tes}
\begin{align}  \label{tes1}
\Delta \phi   &=   \omega_\phi            \quad                   \text{ on } \Omega, \\ \label{tes2}
\phi          &= 0                        \quad                   \text{ on } \Go\cup\Gamma_0 ,  \\ \label{tes3}
\pn \phi      &= 0                        \quad                   \text{ on } \Gi.
\end{align}
\end{subequations}
Moreover, the function  $\phi$ is  compactly supported in $(0,T)$.
Let us denote by $H'$ the dual space of $H^1(\Omega)$ for the $L^2$ scalar product, seen as a subset of $H^{-1}(\Omega)$.
Using the fact that we have the equality 
\begin{equation}
\int_{\Omega}{\phi\omega_{\phi}} = -\int_{\Omega}{|\nabla\phi|^2},
\end{equation}
we get that the function $\phi$ is also in $L^{\infty}([0,T];H^1(\Omega))$, together with the estimate 
\begin{equation}\label{e:phi-omephi-est}
\Vert\phi(t,.)\Vert_{H^1(\Omega)}\leq C \Vert\omega_{\phi}(t,.)\Vert_{H'}, 
\end{equation}
for every $t$ in $[0,T]$ 
with some constant $C$ independent of the time.

By integrating by parts, and using that on the one hand 
$\partial_t \pn \phi     =\pn  \partial_t  \phi  = 0  $  on $ \Gi$ for all $t$ in $[0,T]$, 
and on the other hand \eqref{lap2}, we have that 
\begin{equation}
    \label{dua1}
    \int_\Omega \tilde{\varphi} \partial_t \omega_\phi 
    = -  \int_\Omega \nabla  \tilde{\varphi} \cdot  \nabla \partial_t \phi .
\end{equation}
Since, for  all $t$ in $[0,T]$, the function  $\partial_t \phi (t,\cdot)$ is in 
$\Ho$, we have, by applying \eqref{e:zarem-weaksense} with $\partial_t \phi (t,\cdot)$ instead of $a$, and integrating over $(0,T)$,  that 
\begin{equation}
\label{dua2}
  \int_0^T    \int_\Omega \nabla  \tilde{\varphi} \cdot \nabla \partial_t \phi 
    = -  \int_0^T   \int_\Omega \nabla  \tilde{\psi} \cdot \nabla \partial_t \phi     -  \int_0^T  \int_\Omega \tomega \partial_t \phi .
\end{equation}
By integrating by parts in space and in time and using  \eqref{tes1}, we deduce that 
\begin{align} \nonumber
   \int_0^T   \int_\Omega \nabla  \tilde{\psi} \cdot \nabla \partial_t \phi
   &= -  \int_0^T  \int_\Omega  \tilde{\psi} \partial_t \omega_\phi
   +  \int_0^T  \int_\Gamma  \tilde{\psi} \pn \partial_t \phi 
 \\ \label{erc} &=    \int_0^T  \int_\Omega (\partial_t \tilde{\psi})  \omega_\phi
   -  \int_0^T  \int_\Gamma  (\partial_t \tilde{\psi}) \pn  \phi .
   \end{align}
On the one hand, by the Cauchy-Schwarz inequality 
and Proposition \ref{prop-f}, 
\begin{equation}
\label{inter1}
\left\vert \int_0^T  \int_\Omega (\partial_t \tilde{\psi})  \omega_\phi \right\vert 
    \leq C \Vert\omega_\phi \Vert_{L^{2}((0,T);H')}.
\end{equation}
On the other hand, by using the boundary decomposition \eqref{deco-do} and recalling  \eqref{aubord}, 
\begin{equation}
\label{inter2}
 \int_0^T  \int_\Gamma  (\partial_t \tilde{\psi}) \pn  \phi =
 \sum_{i\in I}  \int_0^T   \tilde{\psi}_i' \int_{\Gamma_i}  \pn  \phi  .
  \end{equation}
Moreover, for any $i$ in $I^*$, recalling the definition of $f^i$ in \eqref{def-f}, we have, for all non negative times, that
\begin{align*}
 \int_{\Gamma_i}  \pn  \phi 
 = \int_{\Omega} \nabla f^i \cdot \nabla  \phi 
 +  \int_{\Omega}  f^i \omega_\phi ,
\end{align*}
so that, by the Cauchy-Schwarz inequality and \eqref{e:phi-omephi-est}, we deduce that 
\begin{align*}
\left\vert \int_{\Gamma_i}  \pn  \phi\, \right\vert
    \leqslant  C \Vert\omega_\phi \Vert_{H'} .
\end{align*}
Therefore, by \eqref{inter2}, the Cauchy-Schwarz inequality with respect to time, Proposition \ref{prop-f} and recalling that $\tilde{\psi}_0$ is identically null, we obtain that 
\begin{equation}
\label{inter4}
\left\vert  \int_0^T  \int_\Gamma  (\partial_t \tilde{\psi}) \pn  \phi\, \right\vert
    \leq C \Vert\omega_\phi \Vert_{L^{2}((0,T);H')} .
\end{equation}
Combining \eqref{erc}, \eqref{inter1} and \eqref{inter4}, 
we infer  that 
\begin{equation}
\label{dua3}
\left\vert \int_0^T   \int_\Omega \nabla  \tilde{\psi} \cdot \nabla \partial_t \phi\, \right\vert
    \leq C  \Vert\omega_\phi \Vert_{L^{2}((0,T);H')} .
\end{equation}
We now turn to the estimate of the last term of \eqref{dua2}.
By \eqref{e:weak-trans-diff}, with $(t_0 , t_1 ) =(0,T)$, 
\begin{equation*}
- \int_{0}^{T} \int_{\Omega} \tomega\partial_t \phi 
= \int_{0}^{T} \int_{\Omega} 
\big( \tomega \hu \cdot \nabla \phi  +\homega \tu  \cdot  \nabla \phi \big) 
    - \int_{0}^{T}{\int_{\Gamma}{\phi \tomega g}} .
\end{equation*}
Since $\tomega$ and $\homega$ are in $L^\infty ((0,T) \times \Omega) $,
since  $\tu$ and $\hu$ are in $L^2 ((0,T) \times \Omega) $, 
since $\tomega$ is in  $L^{\infty}(0,T ;L^{\infty}(\Go,  |g|))$, by the classical trace properties, see for example \cite[Section 13.6]{TW}, 
and using \eqref{e:phi-omephi-est},
we obtain that 
\begin{equation}\label{dua4}
\left\vert \int_{0}^{T} \int_{\Omega} \tomega\partial_t \phi \right\vert 
    \leq C  \Vert\omega_\phi \Vert_{L^{2}((0,T);H')} .
\end{equation}
By gathering \eqref{dua1}, \eqref{dua2}, \eqref{dua3}  and \eqref{dua4}, we deduce that 
for any  function  $\omega_\phi$ which is smooth on $[0,T]\times\Omega$ and compactly supported in $(0,T)$, 
\begin{equation*}
\left\vert \int_{0}^{T}  \int_\Omega \tilde{\varphi} \partial_t \omega_\phi\, \right\vert
   \leq C  \Vert\omega_\phi \Vert_{L^{2}((0,T);H')} .
\end{equation*}
This entails that  $\partial_t \tilde{\varphi}$ is  in the space $L^2((0,T);H^1 (\Omega))$ 
and it follows from the properties of the trace that  $\partial_t \tilde{\varphi}$ is indeed
 in the space $L^2((0,T); \Ho)$. This concludes the proof of Proposition \ref{p:aux-test-reg}. 
\end{proof}

%%%%%%%%%%%%%%%%%%%%%%%%%%%%%%%%%%%%%%%%%%%%%%%%%%%%%%%%%%%%
\section{A generalized Lamb-type lemma}
\label{Lamb-G}

This section is devoted to the following Lamb-type lemma.
\begin{lem}\label{l:lemast}
Let $u,v$ two divergence-free vector fields in 
$C^0 (\overline \Omega; \R^2)$   with bounded  vorticity.
Let $w$ a vector field in $H^1(\Omega ; \R^2)$.
We have the following equality:

\begin{align}
\label{VRAI}
\int_{\Gamma}{(u\cdot v)(w\cdot n)}
    &-\int_{\Gamma}{(u\cdot w)(v\cdot n)}
    -\int_{\Gamma}{(v\cdot w)(u\cdot n)} \\ \nonumber
     &=\int_{\Omega}{(u\cdot v)\div w}
    -\int_{\Omega}{(\curl u)\, v^{\perp}\cdot w}
    -\int_{\Omega}{(\curl v)\, u^{\perp}\cdot w} \\ \nonumber
    &-\int_{\Omega}{u\cdot \left(\left(v\cdot \nabla\right)w\right)}
    -\int_{\Omega}{v\cdot \left(\left(u\cdot \nabla\right)w\right)} .
\end{align}
 In particular, in the case where $u=v$, then \eqref{VRAI} reduces to 
\begin{align}
\label{VRAIAUSSI}
\int_{\Gamma}{|u|^2(w\cdot n)}
    &-2\int_{\Gamma}{(u\cdot w)(u\cdot n)} \\ \nonumber
     &=\int_{\Omega}{|u|^2 \div  \, w}
    -2\int_{\Omega}{\omega u^{\perp}\cdot w}
    -2\int_{\Omega}{u\cdot \left(\left(u\cdot \nabla\right)w\right)} ,
\end{align}
where $\omega:= \curl \, u$.
\end{lem}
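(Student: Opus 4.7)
The plan is to derive \eqref{VRAI} from the two-dimensional pointwise Lamb identity
\begin{equation}
\label{lamb-point}
\nabla (u \cdot v) = (u\cdot \nabla)v + (v\cdot \nabla)u + (\curl u)\, v^{\perp} + (\curl v)\, u^{\perp},
\end{equation}
valid for any sufficiently smooth $u,v$, combined with two straightforward integrations by parts.

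First I would verify \eqref{lamb-point} by expanding both sides component-wise, using the convention $a^{\perp} = (-a_2,a_1)$ so that $\curl a = \partial_1 a_2 - \partial_2 a_1$; the cross terms in $\partial_i(u\cdot v)$ rearrange into the two directional derivatives plus the two $\curl \cdot (\cdot)^{\perp}$ contributions on the right-hand side. This is the two-dimensional specialization of the classical Lamb formula $\nabla(a\cdot b) = (a\cdot\nabla)b + (b\cdot\nabla)a + a \times (\nabla\times b) + b \times (\nabla\times a)$.

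Next, I would integrate by parts the two trilinear bulk terms appearing in \eqref{VRAI}. Since $\div v = 0$, writing $u \cdot ((v\cdot \nabla)w) = u_i v_j \partial_j w_i$ and integrating in the $x_j$-variable gives
\begin{equation}
\label{ipp-cross}
\int_{\Omega}{u\cdot ((v\cdot \nabla)w)} = \int_{\Gamma}{(u\cdot w)(v\cdot n)} - \int_{\Omega}{w\cdot ((v\cdot \nabla)u)},
\end{equation}
and an analogous identity for $v\cdot ((u\cdot \nabla)w)$ by exchanging $u$ and $v$. Summing the two and substituting \eqref{lamb-point} to replace $(v\cdot \nabla)u + (u\cdot \nabla)v$ produces the two boundary terms $\int_{\Gamma}{(u\cdot w)(v\cdot n)} + \int_{\Gamma}{(v\cdot w)(u\cdot n)}$, together with the interior term $-\int_{\Omega}{w\cdot \nabla(u\cdot v)}$ and the two curl integrals of \eqref{VRAI} (with the correct signs). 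A last integration by parts
\begin{equation*}
-\int_{\Omega}{w\cdot \nabla(u\cdot v)} = -\int_{\Gamma}{(u\cdot v)(w\cdot n)} + \int_{\Omega}{(u\cdot v)\div w}
\end{equation*}
supplies the remaining terms of \eqref{VRAI} after rearrangement. The specialization \eqref{VRAIAUSSI} then follows immediately by setting $u = v$, the three distinct boundary terms collapsing into those displayed.

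There is no real analytical obstacle; the main care is in bookkeeping of signs and in checking integrability. For the latter, the hypotheses $u,v\in C^0(\overline{\Omega})$, $\div u = \div v = 0$, and $\curl u, \curl v \in L^{\infty}(\Omega)$ yield $u,v\in W^{1,p}(\Omega)$ for every finite $p$ by the same elliptic theory underlying Proposition \ref{p:yudo-ell-bound}, which together with $w\in H^1(\Omega)$ makes every product in the computation integrable. The boundary integrals are meaningful as pairings of the continuous traces of $u$ and $v$ with the $H^{1/2}$ trace of $w$, so the divergence theorem applies at each step.
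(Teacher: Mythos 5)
Your argument is essentially the paper's own proof: both rest on the pointwise two-dimensional Lamb identity for $\nabla(u\cdot v)$ combined with integrations by parts exploiting $\div u=\div v=0$; you merely run the computation in the opposite order (starting from the trilinear bulk terms and ending at $\int_{\Gamma}(u\cdot v)(w\cdot n)$, whereas the paper starts from that boundary term via Stokes' formula and then integrates the directional-derivative terms by parts). Your closing paragraph on integrability, via the elliptic regularity $u,v\in W^{1,p}(\Omega)$ for all finite $p$, is a reasonable way of making precise the ``approximation process'' that the paper invokes without detail.

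One concrete correction: with your stated convention $a^{\perp}=(-a_2,a_1)$ and $\curl a=\partial_1 a_2-\partial_2 a_1$, the pointwise identity reads
\begin{equation*}
\nabla(u\cdot v) = (u\cdot\nabla)v + (v\cdot\nabla)u - (\curl u)\,v^{\perp} - (\curl v)\,u^{\perp},
\end{equation*}
i.e.\ the curl terms carry a \emph{minus} sign. (Test with $u=v=(-x_2,x_1)$: the left side is $(2x_1,2x_2)$, while $2(u\cdot\nabla)u=(-2x_1,-2x_2)$, $\curl u=2$ and $u^{\perp}=(-x_1,-x_2)$, so only the minus sign balances.) If the plus signs in your displayed identity were carried through literally, the two curl integrals in \eqref{VRAI} would come out with the wrong sign. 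Since you announce that they come out ``with the correct signs,'' this is evidently a transcription slip rather than a flaw in the method, but it should be fixed, and the component-wise verification you promise would catch it.
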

Lemma \ref{l:lemast} implies 
 Lemma \ref{l:lemast-pre} by considering the particular case where $u=v$ and where $w$ is divergence free. It also implies \cite[Lemma 7]{bd}  and
\cite[Lemma 2.15]{Prague} in the particular cases where $w$ is a rigid velocity.  
It is also  useful several times in the sequel.
\begin{proof}
It is sufficient to prove \eqref{VRAI} in the case where the three vector fields $u$, $v$ and $w$ are smooth, since then the result follows from an approximation process. 
We start with using Stokes' formula to obtain that
\begin{align} \label{VRAI1}
\int_{\Gamma}{(u\cdot v)(w\cdot n)} 
    = \int_{\Omega}{\div((u\cdot v)w)} 
    = \int_{\Omega}{\nabla(u\cdot v)\cdot w} + \int_{\Omega}{(u\cdot v)\div w} .
\end{align}
Moreover,
\begin{equation}
\nabla(u\cdot v)
    = (u\cdot \nabla)v + (v\cdot \nabla)u
     - (\curl u )\, v^{\perp} - (\curl v)\, u^{\perp} ,
\end{equation}
and therefore 
\begin{align} \label{VRAI2}
\int_{\Gamma}{(u\cdot v)(w\cdot n)}
     =&\int_{\Omega}{(u\cdot v)\div  \,w}
    -\int_{\Omega}{(\curl u)\, v^{\perp}\cdot w}
    -\int_{\Omega}{(\curl v)\, u^{\perp}\cdot w} \\ \nonumber 
    & + \int_{\Omega}  {((u\cdot \nabla)v) \cdot w}
    +\int_{\Omega} {((v\cdot \nabla)u) \cdot w} .
\end{align}
Finally, by some  integrations by parts,  since  $u$ and $v$ are divergence-free,  
\begin{align} \label{VRAI3}
\int_{\Omega}{((u\cdot \nabla)v) \cdot w}
    &= -\int_{\Omega}{v \cdot ((u\cdot \nabla)w)}
     + \int_{\Omega}{(v\cdot w)(u\cdot n)}, \quad \text{and}
     \\ \nonumber \int_{\Gamma}{((v\cdot \nabla)u) \cdot w}
    &= -\int_{\Omega}{u \cdot ((v\cdot \nabla)w)}
     + \int_{\Gamma}{(u\cdot w)(v\cdot n)} .
\end{align}
Gathering 
\eqref{VRAI1}, \eqref{VRAI2} and \eqref{VRAI3}, we obtain \eqref{VRAI}.
\end{proof}

%%%%%%%%%%%%%%%%%%%%%%%%%%%%%%%%%%%%%%%%%%%%%%%%%%%%%%%%%%%%
\section{An auxiliary energy-type estimate}
\label{sec-aux-esti}

Recall that the function  $\tilde{\varphi}$ is defined in Section \ref{s:2}.
We denote by $\tv  $ its orthogonal~gradient
\begin{equation} \label{hatv}
\tv  := \nabla^{\perp} \tilde{\varphi} .
\end{equation}
We also set, 
for $i$ in $I^*$, 
\begin{equation}\label{def-Di}
\tilde{\mathcal{D}}_i 
    := \int_{\Gamma_i}{\pn\tilde{\varphi}} .
\end{equation}
Let us observe that,  
it follows from \eqref{lap3}  that, at any time $t\in [0,T]$, 
\begin{equation} \label{rgf}
\tilde{\mathcal{D}}_i (t)= -\tilde{\mathcal{C}}_i (t)    \quad      \text{ for all  }  i\in I_{in}.
\end{equation}
\begin{prop} \label{prop-auxesti}
For   $0\leq t_0<t_1 \leq T$, 
\begin{align} \label{auxe}
\frac{1}{2}\left[\Vert \tv   (t,.) \Vert_{L^2 (\Omega)}^2\right]_{t_0}^{t_1}
     &+\int_{t_0}^{t_1}{\int_{\Gi}{| \tu  |^2(- g)}} \\ \nonumber
    &= \int_{t_0}^{t_1}{\int_{\Go}{( \tu  \cdot \tv   )(- g)}} 
      + \int_{t_0}^{t_1}{\int_{\Gi}{( \tu  \cdot\hu )(\tv   \cdot n)}} \\ \nonumber
     &\quad + \int_{t_0}^{t_1}\int_{\Omega} 
          \Big( \tu  \cdot ((\tv   \cdot\nabla)\hu )
          +\tv   \cdot (( \tu  \cdot\nabla)\hu )
          +{\homega  \tu  \cdot \tv   ^{\perp}} \Big)  \\ \nonumber
     &\quad - \int_{t_0}^{t_1}{\sum_{i\in I^*}{\tilde{\psi}_i' \tilde{\mathcal{D}}_i}} 
      + \int_{t_0}^{t_1}{\int_{\Gi}{\tilde{\varphi} \tomein  g}} . 
\end{align}
\end{prop}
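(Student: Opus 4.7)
The plan is to use the auxiliary harmonic function $\tilde{\varphi}$ as a test function in the weak vorticity formulation \eqref{e:weak-trans-diff} written for the difference of two solutions; this substitution is legitimate precisely by the $H^1$-in-time regularity provided by Proposition \ref{p:aux-test-reg}. Plugging $\phi=\tilde{\varphi}$ produces three bulk terms on the left and a boundary integral plus a bracket term on the right, and the whole proof consists in rewriting each of them with the help of the Hodge--De Rham identities ($\tomega=\Delta\tilde{\psi}$, $\nabla\tilde{\varphi}=-\tv^{\perp}$), the Zaremba boundary conditions \eqref{caremba}, and spatial integrations by parts built on $\Delta\tilde{\varphi}=0$.

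The time-derivative term $\int_{t_0}^{t_1}\int_{\Omega}\tomega\,\partial_t\tilde{\varphi}$ will be rewritten via $\tomega=\Delta\tilde{\psi}$ and an integration by parts; after using that $\partial_t\tilde{\varphi}=0$ on $\Go\cup\Gamma_0$ and that $\pn\tilde{\psi}=-\pn\tilde{\varphi}$ on $\Gi$, Green's identity combined with $\Delta\tilde{\varphi}=0$ extracts a contribution $-\frac{1}{2}\partial_t\Vert\tv\Vert_{L^2(\Omega)}^2$ and leaves a bulk piece $-\int_{\Omega}\tu\cdot\partial_t\tv$. The bracket term $[\int_{\Omega}\tomega\,\tilde{\varphi}]_{t_0}^{t_1}$ is processed analogously, giving $-[\Vert\tv\Vert_{L^2(\Omega)}^2]_{t_0}^{t_1}-[\sum_{i\in I^*}\tilde{\psi}_i\tilde{\mathcal{D}}_i]_{t_0}^{t_1}$, where I have used the auxiliary identity $\int_{\Omega}\tu\cdot\tv=\sum_{i\in I^*}\tilde{\psi}_i\tilde{\mathcal{D}}_i$ coming from one integration by parts together with \eqref{aubord} and \eqref{def-Di}. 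Assembling these contributions and applying an integration by parts in time to exchange $\int_{t_0}^{t_1}\sum_{i}\tilde{\psi}_i\tilde{\mathcal{D}}_i'$ against $\int_{t_0}^{t_1}\sum_{i}\tilde{\psi}_i'\tilde{\mathcal{D}}_i$, the factor $\frac{1}{2}[\Vert\tv\Vert_{L^2(\Omega)}^2]_{t_0}^{t_1}$ lands on the left-hand side of \eqref{auxe} and the circulation contribution $-\int_{t_0}^{t_1}\sum_{i\in I^*}\tilde{\psi}_i'\tilde{\mathcal{D}}_i$ emerges on the right.

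To handle the two convective terms $\int_{\Omega}(\tomega\hu+\homega\tu)\cdot\nabla\tilde{\varphi}$, the plan is to rewrite $\nabla\tilde{\varphi}=-\tv^{\perp}$ and then invoke the generalized Lamb lemma \ref{l:lemast} with $u=\tu$, $v=\tv$, $w=\hu$; its hypotheses hold since $\div\hu=0$, $\curl\tv=\Delta\tilde{\varphi}=0$ and $\tu\cdot n=0$ on $\Gamma$. The lemma then converts $\int_{\Omega}\tomega\,\hu\cdot\tv^{\perp}$ into a combination of the boundary integrals $\int_{\Gamma}(\tu\cdot\tv)g$ and $\int_{\Gamma}(\tu\cdot\hu)(\tv\cdot n)$ together with the two trilinear bulk pieces $\int_{\Omega}\tu\cdot((\tv\cdot\nabla)\hu)$ and $\int_{\Omega}\tv\cdot((\tu\cdot\nabla)\hu)$, while $\int_{\Omega}\homega\,\tu\cdot\tv^{\perp}$ is left untouched. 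The boundary integrals collapse naturally: $\int_{\Gamma}(\tu\cdot\hu)(\tv\cdot n)$ reduces to $\int_{\Gi}(\tu\cdot\hu)(\tv\cdot n)$ because $\tv\cdot n=-\partial_\tau\tilde{\varphi}=0$ on $\Go\cup\Gamma_0$, and the source term $\int_{t_0}^{t_1}\int_{\Gamma}\tilde{\varphi}\,\tomega g$ reduces to $\int_{t_0}^{t_1}\int_{\Gi}\tilde{\varphi}\,\tomein g$ by the Dirichlet condition \eqref{lap2}.

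The last and crucial step is the treatment of $\int_{\Gamma}(\tu\cdot\tv)g$: on $\Gi$ the Neumann condition \eqref{lap3} together with $\tu\cdot n=0$ gives $\tv\cdot\tau=-\tu\cdot\tau$, hence $\tu\cdot\tv=-|\tu|^2$ there; splitting $\int_{\Gamma}=\int_{\Gi}+\int_{\Go}$ (the $\Gamma_0$ piece vanishing by \eqref{signeg}) then extracts the quantity $\int_{\Gi}|\tu|^2(-g)$, which is transferred to the left-hand side of \eqref{auxe}, while the $\Go$ contribution becomes the advertised $\int_{\Go}(\tu\cdot\tv)(-g)$. I expect the main obstacle to be the careful bookkeeping of the circulation quantities $\tilde{\psi}_i$ and $\tilde{\mathcal{D}}_i$ and their time derivatives: in particular the identity $\int_{\Omega}\partial_t\tu\cdot\tv=\sum_{i\in I^*}\tilde{\psi}_i'\tilde{\mathcal{D}}_i$ (which itself relies on $\Delta\tilde{\varphi}=0$) must be coupled with the time-integration by parts that trades $\tilde{\mathcal{D}}_i'$ for $\tilde{\psi}_i'$, all while keeping straight which identities hold globally on $I^*$ and which ones, such as \eqref{rgf}, only hold on $I_{\mathrm{in}}$.
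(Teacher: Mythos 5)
Your proposal is correct and takes essentially the same route as the paper: test \eqref{e:weak-trans-diff} with $\tilde{\varphi}$, apply Lemma \ref{l:lemast} with $(u,v,w)=(\tu,\tv,\hu)$, and use the Zaremba boundary conditions to turn $\int_{\Gi}(\tu\cdot\tv)g$ into $\int_{\Gi}|\tu|^2(-g)$ and to localize the remaining boundary integrals. The only cosmetic difference is that you run the Green identities directly on $\Delta\tilde{\psi}=\tomega$ and $\Delta\tilde{\varphi}=0$ where the paper instead plugs $\tilde{\varphi}$ and $\partial_t\tilde{\varphi}$ as test functions into the variational formulation \eqref{e:zarem-weaksense}; the two are equivalent, and both yield the two trilinear bulk terms with a minus sign on the right-hand side (the sign displayed in \eqref{auxe} appears to carry a typo, which is immaterial for Corollary \ref{auxi-cor} since only absolute values are used there).
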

Observe that due to the sign conditions in \eqref{signeg}, the second term in the left  hand side is non negative; and is twice the  "bad" boundary term in 
 \eqref{i:ener-ineq}.  
\begin{proof}
Thanks to Proposition \ref{p:aux-test-reg},
we can apply \eqref{e:weak-trans-diff}  to  the case
where the test function is the function  $\tilde{\varphi}$.
This entails that for  every $t_0,t_1$
\begin{equation}
\label{anum}
\int_{t_0}^{t_1} \int_{\Omega} \tomega\partial_t\tilde{\varphi} 
+ \int_{t_0}^{t_1} \int_{\Omega} \tomega \hu  \cdot  \nabla \tilde{\varphi}
+ \int_{t_0}^{t_1} \int_{\Omega}  \homega  \tu   \cdot  \nabla \tilde{\varphi}
    = \int_{t_0}^{t_1}{\int_{\Gamma}{\tilde{\varphi} \tomega g}}
    +\left\lbrack \int_{\Omega}{\tomega\tilde{\varphi}}  \right\rbrack_{t_0}^{t_1} .
\end{equation}
Let us simplify each term of \eqref{anum}.

Let us start with the last one.
Taking $\tilde{\varphi}$ as a test function in \eqref{e:zarem-weaksense}, and  using  \eqref{hatv}, we get that:
\begin{equation}
\label{ajou1}
-\int_{\Omega}{|\tv|^2}= \int_{\Omega} {\nabla\tilde{\psi}\cdot\nabla\tilde{\varphi}} + \int_{\Omega}{\tomega \tilde{\varphi}} .
\end{equation}
We can simplify the first term of the right hand side of \eqref{ajou1} by an integration by parts:
\begin{equation}
\label{ajou2}
 \int_{\Omega}{\nabla\tilde{\psi}\cdot\nabla\tilde{\varphi}} 
    = - \int_{\Omega}{\tilde{\psi}\Delta\tilde{\varphi}}
      + \int_{\Gamma}{\tilde{\psi}\pn\tilde{\varphi}}
    = \sum_{i\in I^*}{\tilde{\psi}_i\tilde{\mathcal{D}}_i} ,
\end{equation}
by \eqref{lap}, \eqref{deco-do}, 
 \eqref{aubord} and  \eqref{def-Di}.
Therefore, by combining \eqref{ajou1} and \eqref{ajou2},
we obtain that the last term of \eqref{anum} can be recast as
\begin{equation} \label{lourd}
\int_{\Omega}{\tomega\tilde{\varphi}} = -\int_{\Omega}{|\tv|^2} - \sum_{i\in I^*}{\tilde{\psi}_i\tilde{\mathcal{D}}_i} .
\end{equation}

We now turn to the the first term in the left hand side of \eqref{anum} for which we apply  similar computations. More precisely,  by taking $\partial_t  \tilde{\phi}$ as a test function in \eqref{e:zarem-weaksense},  and  using again \eqref{hatv}, we obtain that 
\begin{equation}  \label{balai1}
-\int_{\Omega}{\tv\cdot \partial_t\tv}= 
 \int_{\Omega}{\nabla\tilde{\psi}\cdot\partial_t\nabla\tilde{\varphi}} 
 + \int_{\Omega}{\tomega \partial_t \tilde{\varphi}} .
\end{equation}
Moreover, by an integration by parts, the first term in the right hand side of  \eqref{balai1} can be simplified into 
\begin{equation}  \label{balai2}
  \int_{\Omega}{\nabla\tilde{\psi}\cdot\partial_t\nabla\tilde{\varphi}} 
    = - \int_{\Omega}{\tilde{\psi}\partial_t\Delta\tilde{\varphi}}
      + \int_{\Gamma}{\tilde{\psi}\partial_t\pn\tilde{\varphi}} \\
   = \sum_{i\in I^*}{\tilde{\psi}_i\tilde{\mathcal{D}}_i'} .
\end{equation}
By combining  \eqref{balai1} and  \eqref{balai2} 
and integrating between $t_0$ and $t_1$, we obtain:
\begin{equation} \label{leadus}
\int_{t_0}^{t_1}{\int_{\Omega}{\tomega\partial_t\tilde{\varphi}}}
    = -\frac{1}{2}\left[\Vert \tv \Vert_{L^2  (\Omega)}^2\right]_{t_0}^{t_1} 
    - \int_{t_0}^{t_1}{\sum_{i\in I^*}{\tilde{\psi}_i \tilde{\mathcal{D}}_i'}} .
\end{equation}

The second term in the left hand side of \eqref{anum} can be tackled by 
 Lemma \ref{l:lemast}  to $(\hu ,  \tv   , \hu )$ instead of 
$(u,v,w)$. Using also that  $\curl \tv   =0$ in $\Omega$ 
and \eqref{imp-diff}  this entails that 
\begin{align*}
\int_{\Omega}{\tomega \hu  \cdot  \nabla \tilde{\varphi}}
    &= \int_{\Gamma}{( \tu  \cdot \tv   ) g} - \int_{\Gamma}{( \tu  \cdot \hu )(\tv   \cdot n)} 
    + \int_{\Omega}{ \tu  \cdot ((\tv   \cdot\nabla)\hu )} + \int_{\Omega}{\tv   \cdot (( \tu  \cdot\nabla)\hu )} .
\end{align*}
Moreover, using \eqref{lap3}, 
\eqref{hatv} and again \eqref{imp-diff}, 
the first term in the left hand side above can be decomposed as follows
\begin{equation}
\int_{\Gamma}{( \tu  \cdot \tv   ) g}
    = \int_{\Go}{ (\tu  \cdot \tv ) \,  g}
     + \int_{\Gi}{|\tu  |^2 (-g)},
\end{equation}
and the second one can be simplified into
\begin{equation}
\int_{\Gamma}{( \tu  \cdot \hu )(\tv   \cdot n)} 
=
\int_{\Gi}{( \tu  \cdot \hu )(\tv   \cdot n)} ,
\end{equation}
by  \eqref{hatv} and \eqref{lap2}.
We therefore arrive at  
\begin{align} 
\nonumber
\int_{t_0}^{t_1} \int_{\Omega} \tomega \hu  \cdot  \nabla \tilde{\varphi}
    &= \int_{t_0}^{t_1} \int_{\Go}{( \tu  \cdot \tv  ) g}
     +\int_{t_0}^{t_1} \int_{\Gi}{|\tu  |^2 (-g)} 
    - \int_{\Gi}{( \tu  \cdot \hu )(\tv   \cdot n)}\\
    % - \int_{t_0}^{t_1} \int_{\Go}{( \tu  \cdot\tau) (\tv   \cdot \tau) g} \\
     &+\int_{t_0}^{t_1} \int_{\Omega} \Big(  \tu  \cdot ((\tv   \cdot\nabla)\hu ) + \tv   \cdot (( \tu  \cdot\nabla)\hu ) \Big) . \label{deja}
\end{align}
The third term of the right hand side of \eqref{anum} satisfies
\begin{equation}
\label{batar}
\int_{t_0}^{t_1} \int_{\Omega}  \homega  \tu   \cdot  \nabla \tilde{\varphi} 
   = -\int_{t_0}^{t_1}\int_{\Omega} {\homega  \tu  \cdot \tv   ^{\perp}} ,
\end{equation}
by \eqref{hatv}. 

Finally the first term in the left hand side of \eqref{anum} satisfies
\begin{equation}
\label{marre}
\int_{t_0}^{t_1}{\int_{\Gamma}{\tilde{\varphi} \tomega  g}}
    = \int_{t_0}^{t_1}{\int_{\Gi}{\tilde{\varphi} \tomein  g}},
\end{equation}
by  \eqref{lap2}.

Gathering  \eqref{anum}, \eqref{lourd}, \eqref{leadus}, \eqref{deja}, \eqref{batar}  and \eqref{marre},
we arrive at \eqref{auxe}.
This concludes the proof of Proposition \ref{prop-auxesti}.
\end{proof}

In the same way as we deduce Corollary \ref{t:energ-eq-cor} from Proposition \ref{t:energ-eq},
we deduce the following corollary from Proposition \ref{prop-auxesti}. 
\begin{cor}\label{auxi-cor}
There exists a constant $C>0$ such that for every $p$ in $[2,+\infty)$
and  for every $0\leq t_0<t_1 \leq T$,
\begin{align}
\nonumber
\frac{1}{2} \left[\Vert \tv   (t,.) \Vert_{L^2  (\Omega)}^2\right]_{t_0}^{t_1}
     & + \frac{7}{8} \int_{t_0}^{t_1}{\int_{\Gi}{| \tu  |^2 (- g)}} 
\nonumber
    \leq   \frac{1}{4}  \int_{t_0}^{t_1}{\int_{\Go}{| \tu  |^2 (- g)}}
  \\
\nonumber  & + C\int_{t_0}^{t_1}{\left(\Vert \tv    \Vert_{L^2(\Omega)}^2
       +\Vert\tu\Vert_{L^2(\Omega)}^2\right)}
       + Cp \int_{t_0}^{t_1}{\left(
                \Vert\tv\Vert_{L^{2}(\Omega)}^{2\frac{p-1}{p}} 
                 + \Vert  \tu  \Vert_{L^{2}(\Omega)}^{2\frac{p-1}{p}}\right)}  \\
\label{i:aux-ineq}
     & + C(t_1-t_0).\Vert\tomein\Vert_{L^{\infty}([0,t];L^{\infty}(\Gi,|g|))}^2    
   + (t_1-t_0)\sum_{i\in I_{\mathrm{in}}}{ |\tilde{\mathcal{C}}_{i,0}|^2}    . 
\end{align}
\end{cor}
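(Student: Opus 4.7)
Starting from the identity \eqref{auxe} of Proposition \ref{prop-auxesti}, the plan is to estimate each of the six groups of terms in its right hand side by quantities of the form appearing in the right hand side of \eqref{i:aux-ineq}, while producing along the way a few small multiples of $\int_{t_0}^{t_1}\int_{\Gi}|\tu|^2(-g)$ which can be absorbed into the positive boundary integral on the left hand side of \eqref{auxe}; the arithmetic has to be arranged so that the total amount absorbed is exactly $\tfrac{1}{8}$, so that the announced coefficient $\tfrac{7}{8}$ remains.

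For the two interior trilinear terms $\int_\Omega\tu\cdot((\tv\cdot\nabla)\hu)$ and $\int_\Omega\tv\cdot((\tu\cdot\nabla)\hu)$, I would reproduce the Yudovich-type computation already used to prove Corollary \ref{t:energ-eq-cor}: H\"older's inequality giving the factor $\Vert\hu\Vert_{W^{1,p}}\leq Cp$ from Proposition \ref{p:yudo-ell-bound}, followed by the interpolation $\Vert f\Vert_{L^{2p/(p-1)}}\leq\Vert f\Vert_{L^\infty}^{1/p}\Vert f\Vert_{L^2}^{(p-1)/p}$ applied to $\tu$ and $\tv$ (both in $L^\infty$ via the Sobolev embedding from the $W^{2,p}$ regularity of Propositions \ref{prop-f} and \ref{p:aux-test-reg}), and finally $ab\leq\tfrac12(a^2+b^2)$ to produce the $Cp\int(\Vert\tu\Vert_{L^2}^{2(p-1)/p}+\Vert\tv\Vert_{L^2}^{2(p-1)/p})$ contribution. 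The remaining interior term $\int_\Omega\homega\tu\cdot\tv^\perp$ is handled by a plain Cauchy-Schwarz using the uniform $L^\infty$ bound on $\homega$, yielding the $C\int(\Vert\tu\Vert_{L^2}^2+\Vert\tv\Vert_{L^2}^2)$ piece.

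The boundary integrals are more delicate. On $\int_{\Go}(\tu\cdot\tv)(-g)$, a weighted Young inequality produces $\tfrac14\int_{\Go}|\tu|^2|g|$ together with $\int_{\Go}|\tv|^2|g|$, the latter to be absorbed into $C\int\Vert\tv\Vert_{L^2}^2$ via a trace interpolation combined with the uniform $H^2$ bound on $\tilde\varphi$ of Proposition \ref{p:aux-test-reg}. For the crucial term $\int_{\Gi}(\tu\cdot\hu)(\tv\cdot n)$ I would use that $\tu\cdot n=0$ on $\Gi$ to rewrite $\tu\cdot\hu=(\tu\cdot\tau)(\hu\cdot\tau)$, with $\hu\cdot\tau$ uniformly bounded, and then apply Young's inequality with weight $|g|^{1/2}$ to gain a small absorbable multiple of $\int_{\Gi}|\tu|^2(-g)$ together with a term in $|\tv\cdot n|^2$ controlled via the $W^{2,p}$ regularity of $\tilde\varphi$. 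For the circulation--stream pairing $\int\sum_{i\in I^*}\tilde\psi_i'\tilde{\mathcal{D}}_i$, a further Young inequality combined with Proposition \ref{prop-f} produces an absorbable small multiple of $\int_{\Gi}|\tu|^2(-g)$ together with $C\int\Vert\tu\Vert_{L^2}^2+C\int\sum|\tilde{\mathcal{D}}_i|^2$; the last sum I would handle for $i\in I_{\mathrm{in}}$ via \eqref{rgf} and \eqref{noel} (producing precisely the $|\tilde{\mathcal{C}}_{i,0}|^2$ and $\Vert\tomein\Vert_\infty^2$ terms of \eqref{i:aux-ineq}), and for $i\in I_{\mathrm{out}}$ by testing the Zaremba problem against a suitable cut-off to represent $\int_{\Gamma_i}\pn\tilde\varphi$ as an interior $H^1$ pairing estimated through the Lax-Milgram continuity \eqref{lactou}. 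Finally $\int_{\Gi}\tilde\varphi\tomein g$ is bounded by the uniform $L^\infty$ control on $\tilde\varphi$ (Sobolev embedding from Proposition \ref{p:aux-test-reg}) combined with Young, yielding the $C(t_1-t_0)\Vert\tomein\Vert_\infty^2$ contribution.

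The main obstacle is the boundary term $\int_{\Gi}(\tu\cdot\hu)(\tv\cdot n)$: the normal component $\tv\cdot n$ on $\Gi$ is the tangential derivative of $\tilde\varphi$, which is not directly controlled by the data but only by the ambient $W^{2,p}$ regularity coming from the Zaremba problem \eqref{caremba}. Unlike in the energy identity \eqref{eeq}, this term cannot be made to vanish by the boundary conditions, so Young's inequality must be tuned precisely enough to keep the fraction of $\int_{\Gi}|\tu|^2(-g)$ it reintroduces below the $\tfrac{1}{8}$ budget available for absorption. A secondary subtle point is the estimate of $\tilde{\mathcal{D}}_i$ for $i\in I_{\mathrm{out}}$, where no analogue of Kelvin's formula is available, and which is dealt with by the cut-off representation mentioned above.
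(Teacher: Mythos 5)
Your overall architecture is the right one (estimate each term of \eqref{auxe}, absorb a small fraction of $\int_{\Gi}|\tu|^2(-g)$ into the left-hand side, and keep the total absorbed below $\tfrac18$), and your treatment of the interior trilinear terms, of $\int_\Omega\homega\,\tu\cdot\tv^\perp$, and of $\tilde{\mathcal D}_i$ for $i\in I_{\mathrm{in}}$ coincides with the paper's. But there is a recurring genuine gap: at several points you propose to control boundary quantities built from $\tv=\nabla^\perp\tilde\varphi$ --- namely $\int_{\Go}|\tv|^2|g|$, $\int_{\Gi}|\tv\cdot n|^2$, and $\Vert\tilde\varphi\Vert_{L^\infty}$ --- by invoking the $W^{2,p}$ (or $H^2$, or $L^\infty$) regularity of $\tilde\varphi$ from Proposition \ref{p:aux-test-reg}. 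That regularity is uniform in time but is measured against the \emph{data} (ultimately $\Vert\tomega\Vert_{L^\infty}$), not against $\Vert\tv\Vert_{L^2(\Omega)}$ or $\Vert\tu\Vert_{L^2(\Omega)}$; it is bounded, not small. Any term estimated this way contributes an $O(1)$ constant to the right-hand side of \eqref{i:aux-ineq} that does not vanish when the two solutions coincide, and the Osgood argument of Section \ref{sec-game} then no longer yields uniqueness. The paper's substitute is the H\"ormander-type trace inequality for harmonic functions (Lemma \ref{l:horn-ineq}): since $\tilde\varphi=0$ on $\Go$, its tangential derivative vanishes there and one gets $\int_{\Go}|\pn\tilde\varphi|^2\leq C\Vert\tv\Vert^2_{L^2(\Omega)}$, which is exactly the quadratic-in-$\tv$ bound needed for the $\Go$ term, for $\tilde{\mathcal D}_i$ with $i\in I_{\mathrm{out}}$, and (via trace plus Poincar\'e in $\Ho$) for $\int_{\Gi}\tilde\varphi\,\tomein g$. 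Nothing in your proposal plays this role.

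The second gap concerns the cross term $\int_{\Gi}(\tu\cdot\hu)(\tv\cdot n)$. Your plan --- Young's inequality with weight $|g|^{1/2}$, leaving a term in $|\tv\cdot n|^2$ --- fails twice over: the conjugate factor carries $|g|^{-1}$, which is not integrable where $g$ degenerates on $\Gi$, and on $\Gi$ the tangential derivative of $\tilde\varphi$ does \emph{not} vanish, so $\int_{\Gi}|\tv\cdot n|^2$ cannot be bounded by $\Vert\tv\Vert^2_{L^2(\Omega)}$ by any trace inequality; one is back to the non-small $W^{2,p}$ bound. The missing idea is the boundary identity $\tu\cdot\tau=-\tv\cdot\tau$ on $\Gi$, which follows from the Neumann condition \eqref{lap3} and makes the integrand \emph{quadratic in} $\tv$; the paper then applies the generalized Lamb lemma \ref{l:lemast} with the extension field $(\hu\cdot T)T$ to convert this boundary integral into interior integrals, estimated by $Cp\Vert\tv\Vert_{L^2(\Omega)}^{2(p-1)/p}$ exactly as for the trilinear terms. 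Without this step (and without Lemma \ref{l:horn-ineq}) the $\tfrac18$ absorption budget cannot be met by terms that vanish with the data differences, and the corollary as stated is not reached.
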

In the proof of Corollary  \ref{auxi-cor}
we use the following result on the trace of  harmonic functions. 
\begin{lem}\label{l:horn-ineq}
For all $i\in I$, there exists a constant $C>0$ such that for any function $h$  which is  harmonic on  $\Omega$ and $C^1$ on $\overline{\Omega}$, 
\begin{equation} \label{rafol}
\int_{\Gamma_i}{\left(\pn h\right)^2}
    \leq \int_{\Gamma_i}{\left(\partial_{\tau}h\right)^2}
      +C \int_{\Omega}{|\nabla h|^2} .
\end{equation}
\end{lem}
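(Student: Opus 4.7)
The plan is to use a Rellich-type (Pohozaev) identity localized near the component $\Gamma_i$. The key idea is that, for a harmonic function, the boundary integrand $(\partial_n h)^2 - (\partial_\tau h)^2$ can be written as the boundary trace of a suitable divergence, up to lower-order interior terms controlled by $\|\nabla h\|_{L^2(\Omega)}^2$.

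First, since the components $(\Gamma_j)_{j \in I}$ are disjoint $C^2$ simple closed curves, I would construct a vector field $V \in C^1(\overline{\Omega};\R^2)$, supported in a small tubular neighborhood of $\Gamma_i$, such that $V = n$ on $\Gamma_i$ and $V = 0$ on $\Gamma_j$ for all $j \neq i$. Such a $V$ is obtained by extending the outward unit normal from $\Gamma_i$ inward and multiplying by a smooth cutoff; this only uses the $C^2$ regularity of $\Gamma$ and the fact that the $\Gamma_j$ are disjoint.

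Next, I would establish the pointwise identity, valid on $\Omega$ for any harmonic $h \in C^2$:
\begin{equation*}
\div\left( |\nabla h|^2 V - 2 (\nabla h \cdot V) \nabla h \right)
    = |\nabla h|^2 \div V - 2 \nabla V : (\nabla h \otimes \nabla h).
\end{equation*}
This is a direct computation expanding both sides; the crucial cancellation $2(\partial_i \partial_j h)(\partial_j h)V_i - 2(\partial_i \partial_j h)(\partial_i h)V_j = 0$ (by symmetry of $\partial_i\partial_j h$ and relabeling) together with $\Delta h = 0$ yields the identity.

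Then I integrate over $\Omega$ and apply the divergence theorem. Because $V$ vanishes on $\Gamma \setminus \Gamma_i$, the boundary term reduces to an integral over $\Gamma_i$, on which $V \cdot n = 1$ and $V \cdot \tau = 0$, hence
\begin{equation*}
|\nabla h|^2 (V \cdot n) - 2 (\nabla h \cdot V)(\partial_n h)
    = \bigl( (\partial_n h)^2 + (\partial_\tau h)^2 \bigr) - 2 (\partial_n h)^2
    = (\partial_\tau h)^2 - (\partial_n h)^2,
\end{equation*}
using the orthogonal decomposition $|\nabla h|^2 = (\partial_n h)^2 + (\partial_\tau h)^2$. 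Rearranging gives
\begin{equation*}
\int_{\Gamma_i} (\partial_n h)^2 = \int_{\Gamma_i} (\partial_\tau h)^2
    + \int_\Omega \left( 2 \nabla V : (\nabla h \otimes \nabla h) - |\nabla h|^2 \div V \right).
\end{equation*}
The remaining interior integrand is pointwise bounded by $C \|V\|_{C^1(\overline{\Omega})} |\nabla h|^2$, which yields \eqref{rafol} with $C$ depending only on $\Omega$ and $i$ (through the choice of $V$). A standard density argument extends the identity from $C^2$ to $C^1$ harmonic functions.

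The only real obstacle is the construction of a well-behaved $V$ adapted to the geometry; this is routine since $\Omega$ is $C^2$ and the boundary components are mutually disjoint, so the tubular neighborhood of $\Gamma_i$ does not intersect the other $\Gamma_j$. Everything else is algebraic manipulation and the divergence theorem.
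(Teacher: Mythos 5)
Your proof is correct and follows essentially the same route as the paper: both construct a $C^1$ vector field equal to $n$ on $\Gamma_i$ and vanishing near the other boundary components, and both reduce the boundary integral of $(\partial_{\mathbf{n}} h)^2-(\partial_{\tau}h)^2$ to interior terms quadratic in $\nabla h$ via a Rellich--Pohozaev-type integration by parts. The only difference is presentational: the paper obtains the identity by applying its generalized Lamb lemma (Lemma \ref{l:lemast}) with $u=v=\nabla^{\perp}h$, whereas you derive the same divergence identity directly.
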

Lemma \ref{l:horn-ineq} can be seen for example as local version of H\"ormander's trace inequality (see \cite{Horn:traineq}). Let us refer here to \cite{GKLP} for more on the historical context of this trace inequality 
and its relationship with Rellich and Pohozaev types inequalities. 
For sake of completeness we provide below a proof which uses Lemma \ref{l:lemast}.

\begin{proof}[Proof of Lemma \ref{l:horn-ineq}] 
For all $i\in I$, there is  a  $C^1$ vector field $N_i: \overline{\Omega}\longrightarrow \R^2$ which is  equal to the normal vector  $n$ on $\Gamma_i$ and which is equal to $0$ in a neighbourhood of $\Gamma\setminus\Gamma_i$.
We apply  \eqref{VRAIAUSSI} with $( \nabla^{\perp} h , N_i )$ instead of  $(u,w)$.
Observe that the first vector field is divergence free since the function $h$ is harmonic. 
This entails that 
\begin{equation} \label{fol}
\int_{\Gamma_i}{|\nabla h|^2} 
 - 2\int_{\Gamma_i}{\left(\partial_{\tau}h\right)^2}
    = \int_{\Omega}{|\nabla h|^2 (\div N_i)} 
      - 2\int_{\Omega}{\nabla^{\perp} h\cdot\left(\left(\nabla^{\perp} h\cdot \nabla\right) N_i\right)}.
\end{equation}
Since $N_i$ is $C^1$, we deduce from \eqref{fol} that   there exists a constant $C>0$ such that for any function $h$ that is  harmonic on  $\Omega$ and $C^1$ on $\overline{\Omega}$, the following estimate holds true 
\begin{equation}
\left|\int_{\Gamma_i}{\left(\pn h\right)^2-\left(\partial_{\tau}h\right)^2}\right|
    \leq C \int_{\Omega}{|\nabla h|^2} ,
\end{equation}
and thus in particular \eqref{rafol}.
\end{proof}

\begin{proof}[Proof of Corollary  \ref{auxi-cor}] 
We successively bound the terms in the right hand side of \eqref{auxe}.
The first term can be estimated as follows.
\begin{lem}\label{l:ineq-go}
For all $\varepsilon>0$, there exists $M(\varepsilon)>0$ such that 
%[2,+\infty)
\begin{equation}
\label{evla1}
\left|\int_{\Go} {\left( \tu  \cdot\tv    \right) g} \right|
    \leq \varepsilon \int_{\Go}{| \tu  |^2g } 
     + M(\varepsilon)\int_{\Omega}{|\tv   |^2} .
\end{equation}
\end{lem}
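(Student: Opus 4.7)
The plan is to reduce everything to a weighted Cauchy--Schwarz inequality on $\Go$ combined with a boundary trace estimate obtained from Lemma~\ref{l:horn-ineq}. Since $g\geq 0$ on $\Go$ by \eqref{signeg}, I would first write
\[
\left|\int_{\Go}(\tu\cdot\tv)\,g\right|
\leq \left(\int_{\Go}|\tu|^2 g\right)^{1/2}\left(\int_{\Go}|\tv|^2 g\right)^{1/2},
\]
and then apply Young's inequality with parameter $\varepsilon$ to split this as $\varepsilon\int_{\Go}|\tu|^2 g+\tfrac{1}{4\varepsilon}\int_{\Go}|\tv|^2 g$. Since $g$ is bounded on $\Gamma$, it is enough to estimate $\int_{\Go}|\tv|^2$ by a constant multiple of $\int_\Omega|\tv|^2$.

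The key observation is that, by construction \eqref{lap2}, $\tilde{\varphi}\equiv 0$ on $\Go$, so the tangential derivative $\partial_\tau\tilde{\varphi}$ vanishes identically on $\Go$. Consequently, on each connected component $\Gamma_i\subset \Go$ one has $\nabla\tilde{\varphi}=(\partial_n\tilde{\varphi})\,n$, and therefore
\[
|\tv|^2=|\nabla\tilde{\varphi}|^2=(\partial_n\tilde{\varphi})^2 \quad \text{on } \Gamma_i.
\]
Since $\tilde{\varphi}$ is harmonic by \eqref{lap} and is $C^1$ on $\overline{\Omega}$ thanks to the regularity given by Proposition~\ref{p:aux-test-reg} and Sobolev embedding, I can apply Lemma~\ref{l:horn-ineq} with $h=\tilde{\varphi}$ to each $\Gamma_i\subset\Go$ to get
\[
\int_{\Gamma_i}(\partial_n\tilde{\varphi})^2\leq \int_{\Gamma_i}(\partial_\tau\tilde{\varphi})^2 + C\int_\Omega|\nabla\tilde{\varphi}|^2 = C\int_\Omega|\tv|^2,
\]
the first term vanishing by the above remark. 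Summing over $i\in I_{\mathrm{out}}$ yields $\int_{\Go}|\tv|^2\leq C\int_\Omega|\tv|^2$.

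Combining the two ingredients, I obtain
\[
\left|\int_{\Go}(\tu\cdot\tv)\,g\right|
\leq \varepsilon\int_{\Go}|\tu|^2 g+\frac{C\|g\|_{L^\infty(\Gamma)}}{4\varepsilon}\int_\Omega|\tv|^2,
\]
which is the desired inequality with $M(\varepsilon):=\tfrac{C\|g\|_{L^\infty(\Gamma)}}{4\varepsilon}$. There is no real obstacle here; the only mild subtlety is to notice that the vanishing of $\tilde{\varphi}$ on $\Go$ kills the tangential term in Lemma~\ref{l:horn-ineq}, which is precisely what converts the boundary integral $\int_{\Go}|\tv|^2$ into an interior $H^1$-type quantity.
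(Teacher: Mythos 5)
Your proof is correct and follows essentially the same route as the paper's: a Young/Cauchy--Schwarz splitting of the boundary integral on $\Go$ with parameter $\varepsilon$, followed by Lemma~\ref{l:horn-ineq} applied to the harmonic function $\tilde{\varphi}$, whose vanishing on $\Go$ annihilates the tangential term and converts $\int_{\Go}|\tv|^2$ into $C\int_{\Omega}|\tv|^2$. The only cosmetic difference is that the paper first uses $\tu\cdot n=0$ to reduce $\tu\cdot\tv$ to $(\tu\cdot\tau)(\tv\cdot\tau)$ and identifies $|\tv\cdot\tau|$ with $|\pn\tilde{\varphi}|$, whereas you observe directly that $|\tv|=|\pn\tilde{\varphi}|$ on $\Go$.
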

\begin{proof}[Proof of Lemma \ref{l:ineq-go}]
First, since $\tu   \cdot n =0$, 
\begin{equation}
\int_{\Go}{\left( \tu  \cdot \tv    \right) g}
    = \int_{\Go}{\left( \tu  \cdot \tau\right)\left(\tv   \cdot \tau\right) g} .
\end{equation}

Using \eqref{hatv} 
and  the fact that $ g$ is bounded on $\Gamma$ we deduce that for  all $\varepsilon>0$, there exists $\tilde{M}(\varepsilon)>0$ such that 
\begin{align} \label{ewa1}
\left|\int_{\Go} {\left( \tu  \cdot\tv    \right) g} \right|
&\leq \varepsilon\int_{\Go}{| \tu  |^2\left|g\right|}+\tilde{M}(\varepsilon)
    \int_{\Go}{|\pn \tilde{\varphi}|^2} . 
\end{align}
Now we apply  Lemma \ref{l:horn-ineq}  
for $i$ in $I_{\mathrm{out}}$ with $\tilde{\varphi}$ instead of $h$ and we use that $\tilde{\varphi}$ is equal to $0$ on $\Go$
to obtain that there exists $C>0$ such that 
\begin{equation} \label{hoapplied}
     \int_{\Go}{|\pn \tilde{\varphi}|^2}
     \leq C \int_{\Omega}{|\tilde{v}|^2} .
\end{equation}
Combining  \eqref{ewa1} and  \eqref{hoapplied} and setting $M(\varepsilon):= C \tilde{M}(\varepsilon)$,
we conclude the proof of Lemma~\ref{l:ineq-go}.
\end{proof}

The second term in the right hand side of \eqref{auxe} can be estimated as follows.
\begin{lem}\label{l:ineq-gi}
There exists a constant $C>0$ such that for all $p$ in $[2,+\infty)$, 
\begin{equation} \label{evla2}
\left|\int_{\Gi}{\left(  \hu \cdot \tu   \right)\left(\tv   \cdot n \right)}\right|
    \leq Cp \Vert \tv   \Vert_{L^{2}(\Omega)}^{2\frac{p-1}{p}} .
\end{equation}
\end{lem}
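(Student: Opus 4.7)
My plan is to exploit the Zaremba boundary condition~\eqref{lap3} in order to make the integrand on $\Gi$ quadratic in $\tv$, and then use a trace/interpolation argument to bound the resulting boundary integral.

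The starting point is the following algebraic identity on $\Gamma$: writing $\tu = \nabla^\perp \tilde{\psi}$ and $\tv = \nabla^\perp \tilde{\varphi}$ in the orthonormal moving frame $(\tau,n)$, a direct computation gives $\tu \cdot \tau = \pn \tilde{\psi}$ and $\tv \cdot \tau = \pn \tilde{\varphi}$. Therefore~\eqref{lap3} reads $\tv \cdot \tau = -\tu \cdot \tau$ on $\Gi$, which combined with the impermeability~\eqref{imp-diff} ($\tu \cdot n = 0$) yields the pointwise relation $|\tu| = |\tv \cdot \tau| \leq |\tv|$ on $\Gi$. Consequently, on $\Gi$,
\begin{equation*}
(\hu \cdot \tu)(\tv \cdot n) = (\hu \cdot \tau)(\tu \cdot \tau)(\tv \cdot n) = -(\hu \cdot \tau)(\tv \cdot \tau)(\tv \cdot n),
\end{equation*}
which is quadratic in $\tv$. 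Since $\|\hu\|_{L^\infty(\Gamma)}$ is bounded by a constant depending only on the data (bounded vorticities imply bounded velocities via Biot--Savart, with a $p$-independent bound), and since by Cauchy--Schwarz $|(\tv \cdot \tau)(\tv \cdot n)| \leq \tfrac12 |\tv|^2$, integrating over $\Gi$ gives
\begin{equation*}
\left|\int_{\Gi}(\hu \cdot \tu)(\tv \cdot n)\right| \leq C \int_{\Gi} |\tv|^2 \leq C \|\tv\|_{L^2(\Gamma)}^2.
\end{equation*}

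It then remains to bound $\|\tv\|_{L^2(\Gamma)}^2$ by $Cp\|\tv\|_{L^2(\Omega)}^{2(p-1)/p}$. My plan here is to combine a multiplicative trace inequality of the form $\|\tv\|_{L^2(\Gamma)}^2 \leq C \|\tv\|_{L^{2p/(p-1)}(\Omega)} \|\nabla\tv\|_{L^{2p/(p+1)}(\Omega)}$ (obtained via $\int_\Gamma |\tv|^2 = \int_\Omega \div(|\tv|^2 X)$ for an appropriate vector field $X$) with the interpolation $\|\tv\|_{L^{2p/(p-1)}(\Omega)} \leq \|\tv\|_{L^\infty(\Omega)}^{1/p} \|\tv\|_{L^2(\Omega)}^{(p-1)/p}$ and the $W^{1,p}$ regularity of $\tv$ supplied by Proposition~\ref{p:aux-test-reg}. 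The elliptic estimate for the Zaremba problem~\eqref{caremba} admits a $p$-linear constant analogous to~\eqref{pbound} because the Dirichlet and Neumann parts of $\Gamma$ lie on disjoint connected components of $\Gamma$, so that standard Calder\'on--Zygmund reasoning applies after localization away from the interface.

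The main obstacle is this last step: extracting simultaneously the sharp exponent $2(p-1)/p$ in $\|\tv\|_{L^2(\Omega)}$ and a linear factor in $p$ from the combination of trace theory, boundary interpolation, and Zaremba regularity. The cancellation at the first step, which makes the integrand quadratic in $\tv$ on $\Gi$, is what makes any exponent larger than one possible at all; without it, a trace inequality would only deliver $\|\tv\|_{L^2(\Omega)}^{1/2}$ or $\|\tv\|_{L^2(\Omega)}$, which would be incompatible with the Osgood argument used later in Section~\ref{sec-game}.
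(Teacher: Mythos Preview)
Your first step is correct and matches the paper: using $\tu\cdot n=0$ and $\tu\cdot\tau=-\tv\cdot\tau$ on $\Gi$ to rewrite the integrand as $-(\hu\cdot\tau)(\tv\cdot\tau)(\tv\cdot n)$ is exactly how the paper begins.

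The gap is in your second step. Once you bound $|\hu\cdot\tau|$ by $\|\hu\|_{L^\infty}$ and reduce to estimating $\|\tv\|_{L^2(\Gamma)}^2$, you cannot recover the exponent $2(p-1)/p$. Your multiplicative trace inequality
\[
\|\tv\|_{L^2(\Gamma)}^2 \leq C\big(\|\tv\|_{L^{2p/(p-1)}(\Omega)}\,\|\nabla\tv\|_{L^{2p/(p+1)}(\Omega)} + \|\tv\|_{L^2(\Omega)}^2\big)
\]
only produces \emph{one} factor $\|\tv\|_{L^{2p/(p-1)}}\leq \|\tv\|_{L^\infty}^{1/p}\|\tv\|_{L^2}^{(p-1)/p}$. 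The other factor $\|\nabla\tv\|_{L^{2p/(p+1)}}$ is a Hessian norm of the harmonic function $\tilde\varphi$; it is bounded by the Zaremba data $\pn\tilde\psi$ on $\Gi$, not by a power of $\|\tv\|_{L^2}$. You therefore end up with $C\|\tv\|_{L^2}^{(p-1)/p}$, i.e.\ exponent $(p-1)/p$ rather than $2(p-1)/p$. This is fatal for the Osgood argument in Section~\ref{sec-game}: an estimate of the form $z' \lesssim z^{1/2}$ (which is what $(p-1)/p$ on $\|\tv\|_{L^2}$ amounts to after squaring) does not yield uniqueness, since $\int_0^1 z^{-1/2}\,dz<\infty$.

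The paper avoids this by \emph{not} discarding $\hu$ at the boundary stage. It extends $\tau$ to a $C^2$ field $T$ on $\overline\Omega$, applies the Lamb-type identity of Lemma~\ref{l:lemast} with $(u,v,w)=(\tv,\tv,(\hu\cdot T)T)$, and thereby converts the boundary integral into volume integrals of the form $\int_\Omega |\tv|^2\,\div((\hu\cdot T)T)$ and $\int_\Omega \tv\cdot((\tv\cdot\nabla)(\hu\cdot T)T)$. H\"older then gives $\|\tv\|_{L^{2p/(p-1)}}^2\,\|\hu\|_{W^{1,p}}$, where \emph{both} $\tv$-factors are interpolated, and the linear factor $p$ comes from Proposition~\ref{p:yudo-ell-bound} applied to $\hu$. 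The point is that the $p$-growth is supplied by the Yudovich estimate on $\hu$, not by any Zaremba $W^{2,p}$ bound on $\tilde\varphi$; your route loses access to this mechanism the moment you replace $\hu$ by its $L^\infty$ norm.
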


\begin{proof}
First, since $\tu   \cdot n =0$ 
and $ \tu  \cdot \tau = - \tv   \cdot \tau $ on $\Gi$, we have:
\[ 
\int_{\Gi} \left(\hu \cdot \tu   \right)\left(\tv   \cdot n\right) 
  =-\int_{\Gi}{\left(\hu \cdot \tau\right)\left(\tv   \cdot n\right)\left(\tv   \cdot \tau\right)} .
\]
There exists a vector field $T:\overline{\Omega}\longrightarrow \R^2$ which is $C^2$, equal to $0$ outside a given neighborhood of $\Gi$
and verifies $T_{|\Gi}= \tau$. 
We apply Lemma \ref{l:lemast} with 
$(\tv   ,\tv   ,\left(\hu \cdot T\right)T)$
instead of $(u,v,w)$, observing that $\tv$ 
is divergence free since the function $\tilde{\varphi}$ is harmonic. 
We arrive at
\begin{align*}
2\int_{\Gi}{\left(\hu \cdot \tau\right)\left(\tv   \cdot n\right)\left(\tv   \cdot \tau\right)}
    &= 2\int_{\Gi}{\left(\tv   \cdot n\right)\left(\tv   \cdot \left(\hu \cdot T\right)T\right)} \\
    &= - \int_{\Omega}{|\tv   |^2\,\div(\left(\hu \cdot T\right)T)} 
      + 2\int_{\Omega}{\tv   \cdot\left(\left(\tv   \cdot\nabla\right)\left(\hu \cdot T\right)T\right)} .
\end{align*}
By  H\"older's inequality we have that
\begin{equation}
\left|\int_{\Gi}{\left(\hu \cdot \tau\right)\left(\tv   \cdot n\right)\left(\tv   \cdot t\right)}\right|
    \leq C \Vert \tv   \Vert_{L^{\frac{2p}{p-1}} (\Omega)}^2 \Vert \hu \Vert_{W^{1,p}  (\Omega)} .
\end{equation}
To conclude, we use the interpolation inequality  \eqref{interpo} 
and the estimates of Proposition \ref{p:yudo-ell-bound} and of Proposition \ref{p:aux-test-reg}.
\end{proof}

Regarding the second to last term in the right hand side of \eqref{auxe},
we first establish the following result on the circulations  $\tilde{\mathcal{D}}_i$ of $\tv$ around each $\Gamma_i$.

\begin{lem}\label{l:ineq-PsiD}
For all $\varepsilon>0$, there exists $M(\varepsilon)>0$ such that  for every $0\leq t_0<t_1 \leq T$,
\begin{align}
\nonumber
\left|\int_{t_0}^{t_1}{\sum_{i\in I^*}{\psi_i'\tilde{\mathcal{D}}_i}}\right|
    &\leq M(\varepsilon)\left( (t_1-t_0)  \Vert \tomein\Vert_{L^{\infty}([0,t];L^{\infty}(\Gi,|g|))}^2
           + (t_1-t_0)\sum_{i\in I_{\mathrm{in}}}{ |\tilde{\mathcal{C}}_{i,0}|^2}
           +\int_{t_0}^{t_1}{\Vert\tv\Vert_{L^{2}(\Omega)}^2}  \right) \\
\label{evla3}
     &+ \varepsilon  \int_{t_0}^{t_1}{\left(   
           \Vert  \tu  \Vert_{L^{2}(\Omega)}^{2}
           +\Vert  \tu  \Vert_{L^2\left(\Gamma, |g|\right)}^{2}    \right)}.
\end{align}
\end{lem}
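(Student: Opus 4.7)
The plan is to control $\int_{t_0}^{t_1}\psi_i' \tilde{\mathcal{D}}_i$ by Cauchy–Schwarz in time followed by Young's inequality, which separates the two factors. The $L^2$ estimate on $\psi_i'$ will be absorbed into the $\varepsilon$-term using \eqref{desire} from Proposition \ref{prop-f}, while the $L^2$ estimate on $\tilde{\mathcal{D}}_i$ will require a case analysis: for $i\in I_{\mathrm{in}}$, the circulation $\tilde{\mathcal{D}}_i$ is directly related to the data via \eqref{rgf} and \eqref{noel}; for $i\in I_{\mathrm{out}}$, we must represent $\tilde{\mathcal{D}}_i$ as an interior integral involving $\tv$.

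For each $i\in I^*$, applying Cauchy–Schwarz and then $ab\leq \varepsilon a^2+\frac{1}{4\varepsilon}b^2$ gives
\begin{equation*}
\left|\int_{t_0}^{t_1}\psi_i'\,\tilde{\mathcal{D}}_i\right|
  \leq \varepsilon \int_{t_0}^{t_1}|\psi_i'|^2
     + \frac{1}{4\varepsilon}\int_{t_0}^{t_1}|\tilde{\mathcal{D}}_i|^2.
\end{equation*}
Summing over $i\in I^*$ and invoking \eqref{desire} produces the factor $\varepsilon \int_{t_0}^{t_1}(\Vert\tu\Vert_{L^2(\Omega)}^2+\Vert\tu\Vert_{L^2(\Gamma,|g|)}^2)$ after a relabeling of $\varepsilon$. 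It remains to bound $\int_{t_0}^{t_1}|\tilde{\mathcal{D}}_i|^2$ in each of the two cases.

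For $i\in I_{\mathrm{in}}$, combining \eqref{rgf} and \eqref{noel} (using Remark \ref{rem-trace} to identify the trace of $\tomega$ on $\Gi$ with $\tomein$) yields
\begin{equation*}
\tilde{\mathcal{D}}_i(t) = -\tilde{\mathcal{C}}_{i,0}+\int_0^t\int_{\Gamma_i}\tomein\,g.
\end{equation*}
Since $|g|\mathcal{H}^1$ is a finite measure on $\Gamma_i$, the integral is bounded by $Ct\,\Vert\tomein\Vert_{L^\infty([0,t];L^\infty(\Gi,|g|))}$, so that squaring and integrating on $[t_0,t_1]$ and using $t\leq T$ produces the terms $(t_1-t_0)|\tilde{\mathcal{C}}_{i,0}|^2$ and $(t_1-t_0)\Vert\tomein\Vert^2$ (up to constants depending on $T$ absorbed into $M(\varepsilon)$). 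For $i\in I_{\mathrm{out}}$, the trace of $\tomega$ there is the unknown $\tomeout$, so the direct approach fails; instead, recalling that $f^i$ defined in \eqref{def-f} is harmonic with $f^i=1$ on $\Gamma_i$ and $f^i=0$ on all other boundary components, an integration by parts gives
\begin{equation*}
\tilde{\mathcal{D}}_i = \int_{\Gamma_i}\pn\tilde{\varphi} = \int_\Gamma f^i\,\pn\tilde{\varphi} = \int_\Omega \nabla f^i\cdot\nabla\tilde{\varphi},
\end{equation*}
since $\Delta f^i=0$. The Cauchy–Schwarz inequality and the $C^1$ bound on $f^i$ then yield $|\tilde{\mathcal{D}}_i|\leq C\Vert\tv\Vert_{L^2(\Omega)}$, hence $\int_{t_0}^{t_1}|\tilde{\mathcal{D}}_i|^2 \leq C\int_{t_0}^{t_1}\Vert\tv\Vert_{L^2(\Omega)}^2$.

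Gathering the two cases and absorbing the constants into $M(\varepsilon)$ yields \eqref{evla3}. The only delicate point I anticipate is the $I_{\mathrm{out}}$ case, where one must notice that an interior representation is available through $f^i$ — this is what replaces the missing direct information on the trace of $\tomega$ on $\Go$ and allows the bound to involve $\Vert\tv\Vert_{L^2(\Omega)}^2$ rather than an unavailable boundary quantity.
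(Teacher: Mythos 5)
Your proof is correct and follows the same overall strategy as the paper's: Young's inequality in time to split $\tilde{\psi}_i'$ from $\tilde{\mathcal{D}}_i$, absorption of the $\tilde{\psi}_i'$ contribution into the $\varepsilon$-term via \eqref{desire}, and a case analysis on $i\in I_{\mathrm{in}}$ versus $i\in I_{\mathrm{out}}$, with the $I_{\mathrm{in}}$ case handled identically through \eqref{rgf} and \eqref{noel}. The one place where you genuinely diverge is the bound $|\tilde{\mathcal{D}}_i|\leq C\Vert\tv\Vert_{L^2(\Omega)}$ for $i\in I_{\mathrm{out}}$: you obtain it from the Green-type identity $\tilde{\mathcal{D}}_i=\int_{\Gamma} f^i\,\pn\tilde{\varphi}=\int_{\Omega}\nabla f^i\cdot\nabla\tilde{\varphi}$, which is licit since $\tilde{\varphi}$ is in $C^0([0,T];W^{2,p}(\Omega))$ and $f^i$ is in $C^2(\overline{\Omega})$, whereas the paper applies the Cauchy--Schwarz inequality on $\Gamma_i$ together with the H\"ormander-type trace estimate \eqref{hoapplied}, itself a consequence of Lemma \ref{l:horn-ineq} and of the vanishing of $\tilde{\varphi}$ on $\Go$. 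Your representation is slightly more elementary --- it needs no trace inequality and in fact works for every $i\in I^*$, not only for the sinks --- while the paper's choice simply recycles the estimate \eqref{hoapplied} that it had already established for Lemma \ref{l:ineq-go}. Either way the conclusion \eqref{evla3} follows after summing over $i$ and relabeling $\varepsilon$.
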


\begin{proof}
On the one hand, for $i\in I_{\mathrm{in}}$, by \eqref{rgf} and \eqref{noel}, we obtain that for any $t \geq 0$, 
\begin{equation}
\label{EQA}
|\tilde{\mathcal{D}}_i (t)|
    \leq  |\tilde{\mathcal{C}}_{i,0} |
    + \int_{0}^{t}\int_{\Gamma_i}{{|\tomein||g|}} 
    \leq  |\tilde{\mathcal{C}}_{i,0} |
    + T 
     \Vert \tomein\Vert_{L^{\infty}([0,t];L^{\infty}(\Gi,|g|))} .
\end{equation}
On the other hand, 
 by the Cauchy-Schwarz inequality and 
\eqref{hoapplied}
 we obtain that  there exists a  constant $C>0$ such that for any $t \geq 0$, for all $i\in I_{out}$,
\begin{equation}
  \label{EQB}
  \left| \tilde{\mathcal{D}}_i(t) \right|\leq C\Vert \tv   (t,.) \Vert_{L^2 (\Omega)}.
\end{equation}
Combining   \eqref{EQA} and   \eqref{EQB},  
 we get that there exists a constant $C>0$ such that  for every $0\leq t_0<t_1 \leq T$,
\begin{equation}\label{EQAB}
\int_{t_0}^{t_1}{\sum_{i\in I^*}{|\tilde{\mathcal{D}}_i|^2}}
    \leq C\int_{t_0}^{t_1}{\Vert \tv\Vert_{L^2(\Omega)}^2}
     + C(t_1-t_0)\Vert \tomein\Vert_{L^{\infty}([0,t];L^{\infty}(\Gi,|g|))}^2
     + C(t_1-t_0)\sum_{i\in I_{\mathrm{in}}}{ |\tilde{\mathcal{C}}_{i,0}|^2} .
\end{equation}
Moreover, for any $\varepsilon>0$, there exists $M(\varepsilon)>0$ such that  for every $0\leq t_0<t_1 \leq T$,
\begin{equation}\label{EQC}
\left|\int_{t_0}^{t_1}{\sum_{i\in I^*}{\psi_i'\tilde{\mathcal{D}}_i}}\right|
    \leq M(\varepsilon) \int_{t_0}^{t_1}{\sum_{i\in I^*}{|\tilde{\mathcal{D}}_i|^2}} 
    + \varepsilon C^{-1} \int_{t_0}^{t_1}{\sum_{i\in I^*}{|\psi_i'|^2}}     ,
\end{equation}
 where $C>0$ is the constant which appears in \eqref{desire}.
By combining \eqref{EQC}, \eqref{EQAB} and \eqref{desire}   we obtain 
\eqref{evla3}.
\end{proof}

Regarding the last term in the right hand side of \eqref{auxe},
we have the following estimate. 

\begin{lem}\label{l:tdb-tomein}
There exists a constant $C>0$ such that for any $0<t_0<t_1<T$:
\begin{equation}
\left|\int_{t_0}^{t_1}{\int_{\Gi}{\tilde{\varphi} \tomein  g}}\right|
    \leq C\int_{t_0}^{t_1}{\Vert\tv\Vert_{L^{2}(\Omega)}^2}
       +(t_1-t_0)\Vert\tomein\Vert_{L^{\infty}([0,T]\times \Gi)}^2.
\end{equation}
\end{lem}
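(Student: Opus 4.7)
The estimate should follow from a direct Cauchy--Schwarz/Young splitting of the boundary integral together with a trace inequality applied to $\tilde{\varphi}$, using crucially that $\tilde{\varphi}\in\Ho$.

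First, I would apply Young's inequality pointwise on $\Gi$: for any $\lambda>0$,
\begin{equation*}
\left|\tilde{\varphi}\,\tomein\,g\right| \leq \lambda\, g^{2}\tomein^{2} + \frac{1}{4\lambda}\,\tilde{\varphi}^{2}.
\end{equation*}
Since $g$ is bounded on $\Gamma$ (as $g\in L^\infty([0,T];C^{1,\alpha}(\Gamma))$) and $\Gi$ has finite one-dimensional Hausdorff measure, integrating over $\Gi$ yields a constant $C_{1}>0$ such that
\begin{equation*}
\int_{\Gi}\left|\tilde{\varphi}\,\tomein\,g\right|
\leq C_{1}\lambda\,\Vert\tomein\Vert_{L^{\infty}(\Gi)}^{2}
+ \frac{1}{4\lambda}\,\Vert\tilde{\varphi}\Vert_{L^{2}(\Gi)}^{2}.
\end{equation*}

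The second ingredient is the estimate of $\Vert\tilde{\varphi}\Vert_{L^{2}(\Gi)}$ in terms of the energy $\Vert\tv\Vert_{L^{2}(\Omega)}$. Since $\tilde{\varphi}$ belongs to $\Ho$, it vanishes on $\Go\cup\Gamma_{0}$, a subset of $\Gamma$ of positive $\mathcal{H}^{1}$-measure; Poincar\'e's inequality therefore gives $\Vert\tilde{\varphi}\Vert_{L^{2}(\Omega)}\leq C\Vert\nabla\tilde{\varphi}\Vert_{L^{2}(\Omega)}$. Combining this with the standard trace inequality $\Vert\tilde{\varphi}\Vert_{L^{2}(\Gamma)}\leq C\Vert\tilde{\varphi}\Vert_{H^{1}(\Omega)}$ and recalling $\tv=\nabla^{\perp}\tilde{\varphi}$ so that $|\nabla\tilde{\varphi}|=|\tv|$, we get a constant $C_{2}>0$ with
\begin{equation*}
\Vert\tilde{\varphi}\Vert_{L^{2}(\Gi)}^{2}\leq C_{2}\,\Vert\tv\Vert_{L^{2}(\Omega)}^{2}.
\end{equation*}

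Finally, I choose $\lambda$ so that $C_{1}\lambda=1/(t_{1}-t_{0})$ absorbs into the coefficient in front of $\Vert\tomein\Vert^{2}$ (actually any fixed $\lambda>0$ works since we just need to match constants; pick for instance $\lambda:=1/C_{1}$, then absorb the residual constant into the generic $C$). Integrating in time on $(t_{0},t_{1})$ and bounding $\Vert\tomein(t,\cdot)\Vert_{L^{\infty}(\Gi)}\leq \Vert\tomein\Vert_{L^{\infty}([0,T]\times\Gi)}$ gives
\begin{equation*}
\left|\int_{t_{0}}^{t_{1}}\int_{\Gi}\tilde{\varphi}\,\tomein\,g\right|
\leq C\int_{t_{0}}^{t_{1}}\Vert\tv\Vert_{L^{2}(\Omega)}^{2}
+ (t_{1}-t_{0})\Vert\tomein\Vert_{L^{\infty}([0,T]\times\Gi)}^{2},
\end{equation*}
which is the desired conclusion. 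There is no real obstacle here; the only point requiring mild care is ensuring that the trace and Poincar\'e estimates apply to $\tilde{\varphi}$, which is guaranteed by $\tilde{\varphi}\in\Ho$ from Proposition~\ref{p:aux-test-reg}.
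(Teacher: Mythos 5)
Your proof is correct and follows essentially the same route as the paper: a trace inequality combined with the Poincar\'e inequality for $\tilde{\varphi}\in\Ho$ to control $\Vert\tilde{\varphi}\Vert_{L^2(\Gamma)}$ by $\Vert\tv\Vert_{L^2(\Omega)}$, then a Cauchy--Schwarz/Young splitting of the boundary integral. The only cosmetic wobble is the first suggested choice of $\lambda$ depending on $t_1-t_0$, which you immediately and correctly replace by a fixed $\lambda$.
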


\begin{proof}
By classical trace theory there exists $C>0$ such that 
 \begin{align*}
 \Vert\tilde{\varphi}\Vert_{L^2(\Gamma)} \leq C \Vert\tilde{\varphi}\Vert_{H^1(\Omega)}
 %\\ 
 \leq C' \Vert\tilde{v}\Vert_{L^2(\Omega)} ,
 \end{align*}
 for another $C' >0$ by Poincar\'e inequality, using that $\tilde{\varphi}$ is equal to $0$ on $\Go$.
Therefore Lemma \ref{l:tdb-tomein} follows by using the Cauchy-Schwarz inequality.
\end{proof}

Finally to bound the fourth term in the right hand side of \eqref{auxe}, we proceed as in the proof of Corollary \ref{t:energ-eq-cor}. More precisely 
 we first use H\"older's inequality to get that 
 for all $p$ in $(1,+\infty)$, 
 \begin{align*} 
\left|\int_{\Omega}
\Big( {\homega  \tu  \cdot \tv   ^{\perp}}
+ {\tv   \cdot \left(\left( \tu  \cdot \nabla\right)\hu \right)}
+ { \tu  \cdot \left(\left(\tv   \cdot \nabla\right)\hu \right)} \Big)
\right|
    \leq \left(\Vert \tv   \Vert_{L^{\frac{2p}{p-1}}(\Omega)}^2+\Vert  \tu  \Vert_{L^{\frac{2p}{p-1}}(\Omega)}^2\right)\Vert \hu \Vert_{W^{1,p}(\Omega)} .
\end{align*}
Then, we observe that both $\tu  $ and $\tv   $ are in $L^{\infty}([0,T];L^{\infty}(\Omega))$, respectively thanks to Proposition \ref{prop-f}
and  Proposition \ref{p:aux-test-reg},  the Sobolev embedding theorem and \eqref{e:u-nab-perp-psi} and  \eqref{hatv}.
  Thus, by  the interpolation inequality \eqref{interpo}  and by Proposition \ref{p:yudo-ell-bound}, 
  we  deduce  that  there exists a positive constant $C$ such that   for all $p$ in $[2,+\infty)$, 
 \begin{align} 
\left|\int_{\Omega}
\Big( {\homega  \tu  \cdot \tv   ^{\perp}}
+ {\tv   \cdot \left(\left( \tu  \cdot \nabla\right)\hu \right)}
+ { \tu  \cdot \left(\left(\tv   \cdot \nabla\right)\hu \right)} \Big)
\right|
   &\leq   C p\left(\Vert \tu  \Vert_{L^2 (\Omega)}^{2\left(1-\frac{1}{p}\right)}+\Vert \tv   \Vert_{L^2 (\Omega)}^{2\left(1-\frac{1}{p}\right)}\right). \label{evla4}
\end{align}

Therefore Corollary  \ref{auxi-cor} is a consequence of Lemma \ref{l:ineq-go}; Lemma \ref{l:tdb-tomein},
Lemma \ref{l:ineq-gi}, and Lemma \ref{l:ineq-PsiD} with $\varepsilon=\frac{1}{8}$ 
and  \eqref{evla4}.
\end{proof}

%%%%%%%%%%%%%%%%%%%%%%%%%%%%%%%%%%%%%%%%%%%%%%%%%%%%%%%%%%%%
\section{Osgood argument and end of the proof}
\label{sec-game}

In this section we combine the two energy estimates respectively obtained in  
Section \ref{p:ener-eq} and in  Section \ref{sec-aux-esti} and use an Osgood argument to conclude the proof of Theorem \ref{main-thm}. 
Indeed,  by summing \eqref{i:ener-ineq} and  \eqref{i:aux-ineq}, we obtain that there exists a constant $C>0$ such that, for  $0\leq t_0<t_1 \leq t \leq T$,  for every $p \geq 2$, 
\begin{align}\label{tag}
&\left[\Vert  \tu   \Vert_{L^2(\Omega)}^2+\Vert \tv    \Vert_{L^2(\Omega)}^2\right]_{t_0}^{t_1}
     + \frac{1}{2} \int_{t_0}^{t_1}{\int_{\Gamma}{| \tu  |^2| g|}} \\  \nonumber
  &\,   \leq  C \int_{t_0}^{t_1} \left(\Vert \tv    \Vert_{L^2(\Omega)}^2
                   + \Vert \tu \Vert_{L^2(\Omega)}^2
                   + p \Vert \tv \Vert_{L^{2}(\Omega)}^{2\frac{p-1}{p}} 
                   + p \Vert \tu \Vert_{L^{2}(\Omega)}^{2\frac{p-1}{p}}    \right)
           \\  \nonumber    &\quad    + C(t_1-t_0) \Big( \Vert \tomein \Vert_{L^{\infty}([0,t];L^{\infty}(\Gi,|g|))}^2
                   +   \sum_{i\in I_{\mathrm{in}}}{ |\tilde{\mathcal{C}}_{i,0}|^2}  \Big).
\end{align}
We first omit  the second term on the left hand side of \eqref{tag} and focus on the estimate of the time-dependent function
\begin{equation} \label{gam}
z := \Vert  \tu  \Vert_{L^2(\Omega)}^2+\Vert \tv   \Vert_{L^2(\Omega)}^2 . 
\end{equation} 
We obtain that for $0 \leq t_0<t_1 \leq t \leq T$ and for every $p \geq 2$,
\begin{equation}\label{i:grontyp3}
[z]_{t_0}^{t_1}
    \leq C\int_{t_0}^{t_1}{\left(z(t)+pz(t)^{1-\frac{1}{p}}\right)\mathrm{dt}} 
    + C(t_1-t_0) \Big(\Vert\tomein\Vert_{L^{\infty}([0,t];L^{\infty}(\Gi,|g|))}^2
    +   \sum_{i\in I_{\mathrm{in}}}{ |\tilde{\mathcal{C}}_{i,0}|^2} \Big).
\end{equation}
In the case where $z(t)$ is more than one, the term in the first parenthesis above can be bounded by $(1+p) z(t)$, and a classical Gronwall argument can be applied. 
The case where $z(t)$ is less than one, which is of particular interest in view of the uniqueness issue, requires to replace  the Gronwall argument by  Osgood's lemma,  
  as this was done by Yudovich in \cite{yudo2005,yudo1995} in the impermeable case. 
To this end  we first establish the following result in view of 
 minimizing the right hand side above 
with respect to $p$ locally in time. 
\begin{lem}\label{l:techn}
Let $a\geq 0$   and $y$ a non negative continuous and non-decreasing function.
Let $F$ a non negative continuous function on $[2,+\infty) \times \R_+$ which is  increasing with respect to the second variable. 
We assume that for every $0<t_0<t_1<T$, and for every $p\in [2,+\infty)$, 
\begin{equation}\label{i:grontyp-y1}
 [y]_{t_0}^{t_1}  \leq a (t_1-t_0) + \int_{t_0}^{t_1}{F(p,y(s))\mathrm{ds}}.
\end{equation}
Let $\mu$ a continuous function from $\R_+$ to $\R_+$. 
We assume that for any $x\geq 0$, there exists $p_x$ in  $[2,+\infty)$ such that 
\begin{equation}
\label{i:def-px}
F(p_x,x) \leq  \mu(x).
\end{equation}
Then, for all $0<t<T$,
\begin{equation}\label{i:grontyp-y2}
[y]_{0}^{t} \leq  at + \int_{0}^{t}{\mu(y(s))\mathrm{ds}}.
\end{equation}
\end{lem}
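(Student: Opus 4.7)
The plan is to pointwise optimize in $p$ by partitioning the time interval and using the monotonicity of $y$ together with the monotonicity of $F$ in its second variable. Fix $t$ in $(0,T)$ and, for each integer $n\geq 1$, introduce a regular partition $0=t_0^n < t_1^n < \dots < t_n^n = t$ of mesh $\delta_n := t/n$. On each slab $[t_k^n, t_{k+1}^n]$, we apply the hypothesis \eqref{i:grontyp-y1} with the choice $p = p_{y(t_{k+1}^n)}$, which is admissible since $p_x \in [2,+\infty)$ for every $x\geq 0$.

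Since $y$ is non-decreasing, we have $y(s) \leq y(t_{k+1}^n)$ for every $s$ in $[t_k^n, t_{k+1}^n]$, so by the monotonicity of $F$ in the second variable and the definition \eqref{i:def-px} of $p_x$,
\begin{equation*}
F\bigl(p_{y(t_{k+1}^n)}, y(s)\bigr) \leq F\bigl(p_{y(t_{k+1}^n)}, y(t_{k+1}^n)\bigr) \leq \mu\bigl(y(t_{k+1}^n)\bigr).
\end{equation*}
Plugging this into \eqref{i:grontyp-y1} yields
\begin{equation*}
[y]_{t_k^n}^{t_{k+1}^n} \leq a(t_{k+1}^n - t_k^n) + (t_{k+1}^n - t_k^n)\,\mu\bigl(y(t_{k+1}^n)\bigr),
\end{equation*}
and summing telescopically over $k = 0,\dots,n-1$ gives
\begin{equation*}
[y]_0^t \leq at + \sum_{k=0}^{n-1} (t_{k+1}^n - t_k^n)\,\mu\bigl(y(t_{k+1}^n)\bigr).
\end{equation*}

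The right-hand side is a Riemann sum for the continuous function $\mu \circ y$ on $[0,t]$; as $n\to +\infty$, it converges to $at + \int_0^t \mu(y(s))\,\mathrm{d}s$, and we obtain \eqref{i:grontyp-y2}. The minor technical point is that the hypothesis \eqref{i:grontyp-y1} is stated for $0 < t_0 < t_1 < T$ strictly, whereas we wish to apply it on $[0, t_1^n]$ and on intervals reaching up to $t$; this is handled by first applying the estimate on $[\eta, t-\eta]$ for arbitrary $\eta > 0$ and then letting $\eta \to 0$, using the continuity of $y$ and of $\mu \circ y$. The main (and only) subtlety in this argument is ensuring that the choice $p = p_{y(t_{k+1}^n)}$ made slab by slab is compatible with the fact that the original inequality \eqref{i:grontyp-y1} must be applied with a single $p$ on each slab: this is precisely what the slab-wise application permits, which is the reason for partitioning rather than trying to pick $p$ globally on $[0,t]$.
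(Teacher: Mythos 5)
Your proof is correct and is essentially identical to the paper's: both partition $[0,t]$ into $n$ equal slabs, choose $p = p_{y((k+1)t/n)}$ on each slab, use the monotonicity of $y$ and of $F$ in its second variable to bound the integrand by $\mu(y((k+1)t/n))$, and pass to the limit via Riemann sums using the continuity of $\mu\circ y$. Your extra remark about handling the strict inequalities $0<t_0<t_1<T$ by an $\eta$-limiting argument is a fine touch that the paper glosses over.
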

\begin{proof}[Proof of Lemma \ref{l:techn}]
Let $t$ in $(0,T)$ and $n$ in $\N^*$.
Using \eqref{i:grontyp-y1} and \eqref{i:def-px}, we get that there are some $(p_{y(\frac{(k+1)t}{n})})_{0 \leq k \leq n-1}$ in $[2,+\infty)^n$  and $C>0$ such that 
\begin{align}
[y]_{0}^{t}
    = \sum_{k=0}^{n-1} \Big( y\left(\frac{(k+1)t}{n}\right)-y\left(\frac{kt}{n}\right) \Big) %\\
\label{i:tech-prf1}
 %   &
 \leq
     at + \sum_{k=0}^{n-1}{\int_{\frac{kt}{n}}^{\frac{(k+1)t}{n}}{F\left( p_{y\left(\frac{(k+1)t}{n}\right)},y(s)\right)\mathrm{ds}}} ,
\end{align}
and for $0 \leq k \leq n-1$, 
\begin{equation}\label{i:tech-prf3}
F\left(p_{y\left(\frac{(k+1)t}{n}\right)},y\left(\frac{(k+1)t}{n}\right)\right)
    \leq C \mu\left(y\left(\frac{(k+1)t}{n}\right)\right).
\end{equation}
Then, as $y$ is an non decreasing function of $t$ and $F$ is an increasing function of $y$, the function $t\mapsto F(p,y(t))$ is increasing, which leads to 
\begin{equation}\label{i:tech-prf2}
\int_{\frac{kt}{n}}^{\frac{(k+1)t}{n}}{F\left(p_{y\left(\frac{(k+1)t}{n}\right)},y(s)\right)\mathrm{ds}}
    \leq \frac{t}{n} F\left((p_{y\left(\frac{(k+1)t}{n}\right)},y\left(\frac{(k+1)t}{n}\right)\right).
\end{equation}

Combining \eqref{i:tech-prf1}, \eqref{i:tech-prf3} and \eqref{i:tech-prf2}, we obtain that
\begin{equation}
[y]_{0}^{t}
    \leq at + \frac{Ct}{n}  \sum_{k=0}^{n-1}{\mu\left(y\left(\frac{(k+1)t}{n}\right)\right)}.
\end{equation}
As this is true for every $n$, we conclude by Riemann summations, using the fact that $\mu$ is continuous.
\end{proof}
We define the function $y$ by 
\begin{equation}  \label{blacO}
y(t):=\underset{s\in[0,t]}{\mathrm{max}}\,{z(s)},
\end{equation}
 where $z$ is the function defined in  \eqref{gam}, and 
 we set 
\begin{equation} \label{blac}
F(p,x):=  x + px^{1-\frac{1}{p}}, \, 
\mu(x):=Cx( 1 + |\ln(x)| ) 
\end{equation}
and
\begin{equation}
a := C( \Vert\tomein\Vert_{L^{\infty}([0,t];L^{\infty}(\Gi,|g|))}^2
    +   \sum_{i\in I_{\mathrm{in}}}{ |\tilde{\mathcal{C}}_{i,0}|^2}) ,
\end{equation}
where $C$ is the constant which appears in 
\eqref{i:grontyp3}. 
The  condition \eqref{i:def-px} is verified with
$p_x =  |\ln(x)| $ for $x$ in $\left[0,e^{-2}\right]$ and 
 $p_x = 2$ for $x$ in $(e^{-2},+\infty)$.
Moreover, according to  \eqref{i:grontyp3} 
  the function $y$ verifies \eqref{i:grontyp-y1}.
Therefore, by Lemma \ref{l:techn},  the function $y$ verifies \eqref{i:grontyp-y2} for all $t\in [0,T]$. 

Let us now recall Osgood's lemma,  see for example  \cite[Lemma 3.4.]{BCD}.
\begin{lem} \label{t:osgood}
Let $\mu:[0,1]\longrightarrow [0,+\infty)$  an increasing continuous function with 
$\mu(0) = 0$, $T>0$ and $c \geq 0$. 
Let  $y: \R_+ \longrightarrow [0,1]$  a  function verifying, for $t $ in $[ 0,T]$, 
\begin{equation}
\label{gro}
y(t) \leq c + \int_{0}^{t}{\mu(y(s))ds},
\end{equation}
In the case where $c >0$, then, for $t $ in $[ 0,T]$, 
$$ \int_{c }^{y(t)}{\frac{dx}{\mu(x)}} \leq  t.$$ 
In the case where $c =0$, if the function $\mu$ also satisfies 
\begin{equation*}
\int_{0}^{1}{\frac{dt}{\mu(t)}} = +\infty ,
\end{equation*}
then  for $t $ in $[ 0,T]$,   $y(t) = 0 $. 
\end{lem}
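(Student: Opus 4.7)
The plan is to introduce the integrated majorant $Y(t) := c + \int_0^t \mu(y(s))\,ds$, which by hypothesis \eqref{gro} satisfies $y(t) \leq Y(t)$ for all $t \in [0,T]$. I would then treat the two cases separately, reducing the degenerate case $c = 0$ to the nondegenerate case $c > 0$ by an approximation argument, and closing the latter by a Gronwall-type separation of variables.

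For the case $c > 0$: here $Y$ is absolutely continuous with $Y(0) = c > 0$ and $Y'(t) = \mu(y(t))$ almost everywhere. Since $\mu$ is increasing and $y \leq Y$, one gets $Y'(t) \leq \mu(Y(t))$ a.e. As $Y(t) \geq c > 0$ and $\mu$ is strictly positive away from $0$ (because it is increasing with $\mu(0) = 0$ and the divergence assumption forces $\mu > 0$ on $(0,1]$ in an integrable sense), I would divide to obtain $Y'(t)/\mu(Y(t)) \leq 1$ a.e., integrate over $[0,t]$, and apply the change of variables $x = Y(s)$ to get $\int_c^{Y(t)} dx/\mu(x) \leq t$. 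The monotonicity of the integrand combined with $y(t) \leq Y(t)$ then yields $\int_c^{y(t)} dx/\mu(x) \leq t$, the case $y(t) < c$ being trivial since the left hand side is then non-positive.

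For the case $c = 0$: the previous argument fails because one cannot divide by $\mu(Y)$ near the origin. The standard workaround is to notice that hypothesis \eqref{gro} with $c = 0$ a fortiori implies $y(t) \leq \varepsilon + \int_0^t \mu(y(s))\,ds$ for every $\varepsilon > 0$, which is the first case with $c$ replaced by $\varepsilon$. Applying the already established conclusion yields $\int_\varepsilon^{y(t)} dx/\mu(x) \leq t$ for every $\varepsilon > 0$ and every $t \in [0,T]$. If one had $y(t_0) > 0$ for some $t_0 \in [0,T]$, letting $\varepsilon \to 0^+$ and invoking the monotone convergence theorem would give $\int_0^{y(t_0)} dx/\mu(x) \leq t_0 < +\infty$, which, combined with $y(t_0) \leq 1$, contradicts the divergence hypothesis $\int_0^1 dx/\mu(x) = +\infty$. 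Hence $y(t) = 0$ for all $t \in [0,T]$.

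The main obstacle is the transition from the nondegenerate to the degenerate case: a direct Gronwall comparison breaks down at the non-Lipschitz point $0$ of $\mu$, which is precisely the regime where the classical uniqueness argument would fail. The divergence of $\int_0^1 dx/\mu(x)$ plays the role of a quantitative replacement for Lipschitz continuity at the origin, and the approximation by $\varepsilon > 0$ followed by a monotone convergence passage is what converts this mere integrability condition into an actual uniqueness statement. Note that the concrete moduli $\mu(x) = C x (1 + |\ln x|)$ arising in \eqref{blac} from the Yudovich-type bound $F(p_x,x) \leq \mu(x)$ are exactly of this Osgood type but not Lipschitz at $0$, which is the reason why this lemma, rather than a standard Gronwall inequality, is required to conclude the proof of Theorem~\ref{main-thm}.
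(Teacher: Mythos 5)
Your proof is correct: it is the standard argument for Osgood's lemma (comparison with the integrated majorant $Y(t)=c+\int_0^t\mu(y(s))\,ds$, separation of variables for $c>0$, and the $\varepsilon\to 0^+$ approximation combined with the divergence of $\int_0^1 dx/\mu(x)$ for $c=0$). The paper does not prove this lemma itself but quotes it from \cite[Lemma 3.4]{BCD}, and your argument coincides with the standard reference proof, so there is nothing to reconcile.
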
 \bigskip

We now apply Osgood's Lemma to the function $y$ defined in \eqref{blacO}, with $\mu$ and $a$ defined in  \eqref{blac}, and  $c=  y(0) + a T$.
Focusing on the first case 
 this yields  that  for $0 \leq t  \leq T$, 
$$ \int_{  y(0) + a T}^{y(t)}{\frac{dx}{\mu(x)}} \leq  t, $$ 
and thus in particular that  for any $t  \geq 0$,
$$ \int_{  y(0) + a t }^{y(t)}{\frac{dx}{\mu(x)}} \leq  t ,$$ 
which leads to 
\begin{equation} \label{top}
y(t) \leq e \,  (y(0)+ta)^{e^{-Ct}}.   
\end{equation}
Using that, similarly to \eqref{lactou}, there holds  
\begin{equation}
\Vert\tv (0,\cdot) \Vert_{L^2(\Omega)} 
    \leq C \big( \Vert\tu (0,\cdot)  \Vert_{L^2(\Omega)}
     + \Vert\tomega (0,\cdot)  \Vert_{L^2(\Omega)} \big) ,
\end{equation} 
and recalling 
\eqref{gam}  and  \eqref{blacO}, 
we  deduce from \eqref{top} that 
\begin{equation*}
\Vert \tu(t)\Vert_{L^2(\Omega)}^2
    \leq C\left( \Vert\tu (0,\cdot)  \Vert_{L^2(\Omega)}^2
                    + \Vert\tomega (0,\cdot)  \Vert_{L^2(\Omega)}^2 
               +t\Big(\Vert\tomein\Vert_{L^{\infty}([0,t];L^{\infty}(\Gi,|g|))}^2
                    +\sum_{i\in I_{\mathrm{in}}}{ |\tilde{\mathcal{C}}_{i,0}|^2} \Big)\right)^{e^{-Ct}}.
\end{equation*}
By using Proposition \ref{p:yudo-ell-bound}
we conclude that there exists a continuous function $F: (\R_+)^3  \times \R^{I^*} \mapsto \R_+$ satisfying $F(\tau,0,0,0)=0$ for all $\tau \geq 0$, and such that for every $t\in [0,T]$, 
\begin{gather*}
\Vert \tu(t)\Vert_{L^2(\Omega)}^2
    \leq F\big(t,
    \Vert\omega^1_0-\omega^2_0\Vert_{L^{\infty}(\Omega)} ,
    \Vert\omein^1-\omein^2\Vert_{L^{\infty}([0,t];L^{\infty}(\Gi,|g|))},
    (\mathcal{C}^1_{i,0} - \mathcal{C}^2_{i,0})_{i\in I^*} \big) .
\end{gather*}
Moreover going back to  \eqref{tag}, we  deduce a similar bound of the second term on the left  hand side and we arrive at 
\eqref{stab-est}. 
 The uniqueness result  corresponds to the case where  $y(0)$ and $ a$ are both zero for which we apply the second case of Osgood's Lemma.

\begin{rem}
With some bookkeeping we observe that formally the inequality \eqref{tag} is obtained 
by applying the weak formulation \eqref{e:weak-trans-diff} with a test function which is a combination of 
 $\tilde \psi $, of $\tilde{\varphi}$ and of  $ \tilde{\mathcal{D}}_i\,  g^i $, for $i$ in  $I^*$, and that such a test function is a non-local
 operator of order $0$ acting on $\tilde \psi$.  This is reminiscent of the Kreiss symmetrizer technics in hyperbolic theory, see for example \cite{BGS,KL}.
\end{rem}

%%%%%%%
\ \par \ 

\paragraph{\textbf{Acknowledgements.}}
The authors are partially supported by the Agence Nationale de la Recherche, Project IFSMACS, grant ANR-15-CE40-0010, Project SINGFLOWS, ANR-18-CE40-0027-01, Project BORDS, grant ANR-16-CE40-0027-01 and the H2020-MSCA-ITN-2017 program, Project ConFlex, Grant ETN-765579.
The last author warmly thank Maria Kazakova, Gennady Alekseev and Alexander Mamontov for their kind help regarding the russian literature on the subject, and David Lannes for interesting discussions on the subject.
This work was partly accomplished while  F.S. was participating in a program hosted by the Mathematical Sciences Research Institute in Berkeley, California, during the Spring 2021 semester, and supported by the National Science Foundation under Grant No. DMS-1928930.


\begin{thebibliography}{10}


\bibitem{Alekseev}
 {\sc Alekseev, G.V.} (1976).
\newblock{On solvability of the nonhomogeneous boundary value problem for two-dimensional nonsteady equations of ideal fluid dynamics.}
\newblock{Dinamika Sploshnoy Sredy, Novosibirsk, Lavrentyev Institute of Hydrodynamics, Issue 24,}  pp. 15-35. [in Russian]

\bibitem{BCD}
 {\sc Bahouri, H., Chemin, J. Y.,  Danchin, R.} (2011).
\newblock{Fourier analysis and nonlinear partial differential equations}
\newblock{Springer Science $\&$ Business Media. (Vol. 343).} 

\bibitem{BGS}
 {\sc Benzoni-Gavage, S.,  Serre, D.} (2007). 
 Multi-dimensional hyperbolic partial differential equations: First-order Systems and Applications. 
 Oxford University Press.

\bibitem{Boyer}
 {\sc Boyer, F.} (2005).
\newblock{Trace theorems and spatial continuity properties for the solutions of the transport equation}
\newblock{Differential equation vol.18 nr.8} pp. 891-934.

\bibitem{BS} 
 {\sc Bravin, M., Sueur, F.} (2021.)
\newblock{Existence of weak solutions to the two-dimensional incompressible Euler equations in the presence of sources and sinks,} 
\newblock{Preprint ArXiv:2103.13912.} 

\bibitem{Che:Sta} 
 {\sc Chemetov, N. V., Starovoitov, V. N.} (2002).
\newblock{On a Motion of a Perfect Fluid in a Domain with Sources and Sinks. }
\newblock{Journal of Mathematical Fluid Mechanics,}4(2),  pp. 128-144.


\bibitem{coron1}
  {\sc Coron, J. M.} (1993). 
\newblock{Contr\^olabilit\'e exacte fronti\`ere de l'\'equation d'Euler des fluides parfaits incompressibles bidimensionnels.}
\newblock{Comptes rendus de l'Acad\'emie des sciences. S\'erie 1, Math\'ematique,} 317(3), pp.271-276.

\bibitem{coron2}
  {\sc Coron, J. M.} (1996). 
\newblock{On the controllability of  incompressible perfect fluids.}
\newblock{Journal de math\'ematiques pures et appliqu\'ees,} 75(2), pp.155.


\bibitem{coron3}
 {\sc Coron, J. M.} (1999).
\newblock{On the null asymptotic stabilization of the two-dimensional incompressible Euler equations in a simply connected domain.}
\newblock{SIAM Journal on Control and Optimization,} 37(6), pp.1874-1896.



\bibitem{GKLP}
  {\sc Girouard, A., Karpukhin, M., Levitin, M.,  Polterovich, I. } (2021). 
\newblock{The Dirichlet-to-Neumann map, the boundary Laplacian, and H\" ormander's rediscovered manuscript.}
\newblock{arXiv preprint arXiv:2102.06594.} 


\bibitem{glass2001}
 {\sc Glass, O.} (2001). 
\newblock{An addendum to a JM Coron theorem concerning the controllability of the Euler system for 2D incompressible inviscid fluids.}
\newblock{Journal de math\'ematiques pures et appliqu\'ees,} 80(8), pp.845-877.


\bibitem{Glass}
 {\sc Glass, O.} (2012).
\newblock{Some questions of control in fluid mechanics In Control of Partial Differential Equations.}
\newblock{Springer, Berlin, Heidelberg.}  pp. 131-206. 

\bibitem{Grisvard}
 {\sc Grisvard, P.} (1985).
\newblock{Elliptic problems in non-smooth domains.}
\newblock{Monographs and studies in mathematics 24} 

\bibitem{Hauray}
 {\sc Hauray, M.} (2009). 
\newblock{Wasserstein distances for vortices approximation of Euler-type equations.}
\newblock{Mathematical Models and Methods in Applied Sciences,} 19(08), pp.1357-1384.

\bibitem{Horn:traineq}
 {\sc H\"ormander, L.}
\newblock{Inequalities between normal and tangential derivatives of harmonic functions.}
\newblock{Unpublished Manuscripts, Springer, Cham.} pp. 37-41.

\bibitem{Kato}
 {\sc Kato, T.} (1967).
\newblock{On classical solutions of the two-dimensional non-stationary Euler equation.}
\newblock{Archive for Rational Mechanics and Analysis,} 25(3), pp.188-200.

\bibitem{KL}
 {\sc Kreiss, H. O.,  Lorenz, J.} (2004). Initial-boundary value problems and the Navier-Stokes equations. Society for Industrial and Applied Mathematics.

\bibitem{Lions}
{\sc Lions, P. L.} (1996). Mathematical Topics in Fluid Mechanics: Volume 1: Incompressible Models (Vol. 1). Oxford University Press on Demand.

\bibitem{Loeper}
 {\sc Loeper, G.} (2006).
\newblock{Uniqueness of the solution to the Vlasov-Poisson system with bounded density.}
\newblock{Journal de math\'ematiques pures et appliqu\'ees,} 86(1), pp. 68-79.
 

\bibitem{Mamontov}
 {\sc Mamontov, A. E.} (2009).
\newblock{On the uniqueness of solutions to boundary value problems for non-stationary Euler equations.}
\newblock{New Directions in Mathematical Fluid Mechanics, Birkh\"auser Basel.}  pp. 281-299.

\bibitem{Marc-Pulv}
 {\sc Marchioro, C., Pulvirenti, M.} (1994).
\newblock{Mathematical theory of incompressible fluids.}
\newblock{Springler Verlag} 


\bibitem{Perrollaz}
 {\sc Perrollaz, V.} (2010). Initial boundary value problem and asymptotic stabilization of the Camassa-Holm equation on an interval. Journal of Functional Analysis, 259(9), pp. 2333-2365.



\bibitem{bd}
 {\sc Sueur, F.} (2013). 
\newblock{On the motion of a rigid body in a two-dimensional ideal flow with vortex sheet initial data.}
\newblock{Annales de l'Institut Henri Poincare (C) Non Linear Analysis} Vol. 30, No. 3, pp. 401-417. Elsevier Masson.

\bibitem{Prague}
  {\sc Sueur, F.} (2017). 
\newblock{Motion of a particle immersed in a two dimensional incompressible perfect fluid and point vortex dynamics.}
\newblock{Particles in flows, Birkh\"auser, Cham.} pp. 139-216.


\bibitem{Troshkin}
  {\sc  Troshkin, O.V.} (1989).
\newblock{A two-dimensional flow problem for the steady Euler equations.}
\newblock{Mat. Sb. 180 (3)  pp.354-374}
\newblock{Translation in: Math. USSR-Sb. 66 (2) (1990) pp.363-382.} 

\bibitem{TW}
{\sc Tucsnak, M.,  Weiss, G.} (2009). Observation and control for operator semigroups. Springer Science $\&$ Business Media.


\bibitem{Uvarovskaya}
 {\sc Uvarovskaya, M.I.} (2003)
\newblock{Existence of solution for two-dimensional nonsteady problem of the flow of an ideal incompressible fluid through a given domain.}
\newblock{Vestnik of Novosibirsk State University, Mathematics, Mechanics and Informatics.  Series, V. III, Issue 1.}  pp. 3-11. [in Russian]

 
\bibitem{pap-weig:ini}
 {\sc  Weigant, W.A., Papin, A.A.} (2014).
\newblock {On the uniqueness of the solution of the flow problem with a given vortex.}
\newblock {Math. Notes, Vol.96, No 6}  pp. 820-826.


\bibitem{Wiedemann}
 {\sc Wiedemann, E.} (2017). 
\newblock{Weak-strong uniqueness in fluid dynamics.}
\newblock{Partial differential equations in fluid mechanics} 289-326. London Math. Soc. Lecture Note Ser, 452.

\bibitem{yudo61}
 {\sc Yudovich, V.I.}
\newblock{Some estimates connected with integral operators and with solutions of elliptic equations,}
\newblock{Soviet Math. Dokl., V. 2,} (1961), pp. 746-749. 
(previously in Dokl. Akad. Nauk SSSR, V. 138, (1961), No. 4, pp. 805-808).

\bibitem{yudo63}
 {\sc Yudovich, V.I.}
\newblock{Non-stationary flow of an ideal incompressible liquid.}
\newblock{Comput. Math. Math. Phys., V. 3,} (1963), pp. 1407-1456. 
(previously in Zh. Vychisl. Mat. Mat. Fiz., (1963). V. 3, No. 6. pp. 1032-1066. [in Russian]).

\bibitem{yudo66}
 {\sc Yudovich, V.I.}
\newblock{Some bounds for solution of elliptic equations}
\newblock{Amer. Math. Soc. Transl. (2) V. 56}  (1966). 
(previously in Mat. Sb., (1962), V. 59, No. 101, pp. 229-244 [in Russian]).

\bibitem{yudo-given}
 {\sc Yudovich, V.I.}
\newblock{A two dimensional problem of unsteady flow of an ideal fluid across a given domain}
\newblock{Amer. Math. Soc. Translations, V. 57,} (1966), pp. 277-304.
(previously in Mat. Sb., 64 (1964), pp. 562-588 [in Russian]).

\bibitem{yudo2005}
  {\sc Yudovich, V. I.} (2005). 
\newblock{Topics in an ideal fluid dynamics.}
\newblock{Journal of Mathematical Fluid Mechanics,} 7(3), pp.299-325.

\bibitem{yudo1995}
 {\sc Yudovich, V. I.} (1995). 
\newblock{Uniqueness theorem for the basic nonstationary problem in the dynamics of an ideal incompressible fluid.}
\newblock{Mathematical Research Letters,} 2(1), pp.27-38.

\end{thebibliography}
\end{document}